\documentclass[12pt,leqno]{article}
\usepackage{amssymb,amsfonts,amsmath,amsthm,amscd,mathrsfs}
\usepackage{MnSymbol}
\usepackage{PSFigure,psfrag}
\usepackage{graphicx}
\usepackage{centernot}
\usepackage{stmaryrd}

\def\IR{{\mathbb R}}

\def\IT{{\mathbb T}}

\def\IZ{{\mathbb Z}}
\def\IV{{\mathbb V}}

\def\n{\noindent}

\def\dis{\displaystyle}

\def\fr{\mbox{\footnotesize $\dis\frac{1}{2}$}}

\def\ov{\overline}
\def\ve{\varepsilon}
\def\f{\footnotesize}

\def\r{\rightarrow}

\def\wh{\widehat}
\def\wt{\widetilde}

\def\cH{{\cal H}}

\def\cF{{\cal F}}

\def\cV{{\cal V}}

\dimendef\dimen=0

\newtheorem{theorem}{Theorem}[section]
\newtheorem{lemma}[theorem]{Lemma}
\newtheorem{corollary}[theorem]{Corollary}
\newtheorem{proposition}[theorem]{Proposition}
\newtheorem{remark}[theorem]{Remark}

\begin{document}

\noindent
~

\bigskip
\begin{center}
{\bf AN EFFECTIVE ENERGY-ENSTROPHY DIFFUSION PROCESS \\ WITH A CONDENSATION BOUND}
\end{center}

\begin{center}
Alain-Sol Sznitman$^1$ and Klaus Widmayer$^2$
\\[2ex]
Preliminary Draft
\end{center}

\begin{abstract}
We use Gaussian measure on $\IR^N$ to define the coefficients of an elliptic diffusion and show that it lives in an open cone of $\IR^2$. One component represents enstrophy and the other energy. We establish the existence and uniqueness of a stationary distribution for this diffusion. Owing to the special properties of the coefficients of this diffusion, we derive a condensation bound, which controls the distance to $1$ of the ratio of the expected energy to the expected enstrophy (this ratio is at most  $1$ with our normalization). In a companion article, as a ``proof of concept'', we show that the diffusion constructed in this work is the inviscid limit of the laws of the ``enstrophy-energy'' process of a stationary $N$-dimensional Galerkin-Navier-Stokes type evolution with Brownian forcing and random stirring (the strength of which can be made to go to zero in the inviscid limit, and which plays the role of a regularization).

\end{abstract}

\vfill
\hfill July 2026
\vfill
\noindent
{\footnotesize
-------------------------------- \\
$^1\,$Department of Mathematics, ETH Zurich, CH-8092 Zurich, Switzerland\\
$^2\,$Institute of Mathematics, University of Zurich, CH-8057 Zurich, Switzerland and Faculty of 

\vspace{-0.5ex}
\noindent
~~\!Mathematics, University of Vienna, A-1090 Vienna, Austria
}

%\vfill

%~
\newpage
\thispagestyle{empty}
~

\newpage
\setcounter{page}{1}

\setcounter{section}{-1}
\section{Introduction}

The incompressible two-dimensional Navier-Stokes equation with Brownian forcing on a square or a thin torus is a classical infinite dimensional Markovian evolution, see \cite{KuksShir12}. There is an extensive theory concerning the existence and the uniqueness of a stationary distribution for these Markov processes, see \cite{HairMatt06}, \cite{HairMatt11}, \cite{KuksShir12}. However, the nature of these stationary distributions remains, except for some simple and rather unrepresentative cases, ill understood. In particular, when an inviscid limit, with an adequate balancing of the forcing, is performed, little is known about their limit behavior, see \cite{GlatSverVico15}, \cite{Kuks08}, and Chapter 5 of \cite{KuksShir12}. This feature persists when considering finite dimensional evolutions corresponding to Galerkin-Navier-Stokes. While chaotic behavior of the dynamics has been established, see \cite{BedrBlumPuns22}, \cite{BedrPuns24}, straightforward questions remain unanswered: existing lower bounds on the ratio between the expectation of the energy and the expectation of the vorticity (the latter being fixed in this limit procedure) are so far blind to the limit procedure, the behavior of the variance of the enstrophy is poorly understood, and the same feature holds concerning the expected trend of the stationary distributions to condense on low Fourier modes under suitable forcings, see \cite{BeckCoopLordSpil20}, \cite{BoucSimo09}, (4.2) in \cite{LinkHohmEckh20},  p.~127 of \cite{Naza11}.

\medskip
In the present work, motivated by these questions, we introduce and study a stationary diffusion process in an open cone of $\IR^2$. We view the components of the process as describing enstrophy and energy. We establish a condensation bound for this diffusion process. It controls in a quantitative fashion the distance to $1$ of the ratio of the expected energy to the expected enstrophy (this ratio is at most $1$ with our normalization). The coefficients of the two-dimensional diffusion process involve the conditional expectations for a Gaussian measure on $\IR^N$ of the squares of the coordinate functions given the value of two quadratic forms on $\IR^N$ respectively corresponding to the enstrophy and the energy. These conditional expectations satisfy a remarkable monotonicity property, which plays an important role in the derivation of the condensation bound. In the companion article \cite{SzniWidm26b}, as a ``proof of concept'', we show that the two-dimensional stochastic process studied here is the inviscid limit of the laws of the ``enstrophy-energy'' process of a stationary $N$-dimensional Galerkin-Navier-Stokes type evolution with Brownian forcing and random stirring (the strength of which can be made to go to zero in the inviscid limit).

\medskip
We will now discuss the results in more detail.

\medskip
We refer to Section 1 for precise assumptions. We consider on $\IR^N$ with $N = 2n$, and $n \ge 4$, two quadratic forms $|x|^2 = \Sigma_\ell \,x_\ell^2$ and $|x|^2_{-1} = \Sigma_\ell \, x_\ell^2/\lambda_\ell$, where $\lambda_{2i} = \lambda_{2i-1}$ (denoted by $\mu_i), 1 \le i \le n$, is increasing and $\mu_1 = 1$. The first quadratic form corresponds to (twice) the enstrophy and the second to (twice) the energy in the terminology of the paragraph above. We refer to Remark \ref{rem1.1} for the typical example we have in mind. We also have a Gaussian measure $\mu$ on $\IR^N$ under which the coordinate functions $x_\ell, 1 \le \ell \le N$, are independent centered Gaussian variables with same variance equal to $a/2$, with $a > 0$. Further, we consider the two-dimensional cone
\begin{equation}\label{0.1}
C = \{w = (u,v) \in \IR^2; \,0 \le v \le u\le \lambda_N v\},
\end{equation}
and its interior $\stackrel{_\circ}{C}$. We construct in Section 2, see also Appendix B, good versions $q_\ell(u,v)$ of the conditional expectation of $x^2_\ell$ given $|x|^2 = u$, and $|x|^2_{-1} = v$, such that
\begin{equation}\label{0.2}
q_\ell (|x|^2, |x|^2_{-1}) = E^\mu\big[x_\ell^2\,\big| \,|x|^2, |x|^2_{-1}\big], \, \mbox{$\mu$-a.s. for $1 \le \ell \le N$}.
\end{equation}
The functions $q_\ell$ are Lipschitz in $C$, positive in $\stackrel{_\circ}{C}$, homogeneous of degree $1$, and on each subsector $\{(u,v) = C$; $0 \le \mu_{i-1} \,v \le u \le \mu_i v\}$, with $2 \le i \le n$, they coincide with a rational function (dependent on $i$).

\medskip
With the functions $q_\ell$, and a sequence $\delta_\ell \in (-1,0]$, $1 \le \ell \le N$, such that $\delta_{2i} = \delta_{2i-1}$, for $1 \le i \le n$ (see also the observation below (\ref{3.6}) about this last condition), we construct the differential operator on $\stackrel{_\circ}{C}$:
\begin{equation}\label{0.3}
\begin{split}
\wt{A} \psi  = &\; 2a\Big\{\dis\sum_\ell \lambda_\ell ( 1 + \delta_\ell) \, q_\ell \,\partial^2_u \psi + 2 \dis\sum_\ell (1 + \delta_\ell) \,q_\ell\,\partial^2_{u,v} \psi + \dis\sum_\ell \, \mbox{\f $\dis\frac{1}{\lambda_\ell}$} (1 + \delta_\ell) \,q_\ell \,\partial_v^2 \psi\Big\}
\\
& + \Big(a \dis\sum_\ell \,\lambda_\ell (1 + \delta_\ell) - 2 \dis\sum_\ell \lambda_\ell \, q_\ell\Big) \, \partial_u \psi +  \Big(a \dis\sum_\ell \,(1 + \delta_\ell) - 2 \dis\sum_\ell q_\ell\Big) \, \partial_v\psi .
\end{split}
\end{equation}

\n
We show in Section 3 that (\ref{0.3}) gives rise to an elliptic second order differential operator with Lipschitz coefficients in $\stackrel{_\circ}{C}$, and a well-posed martingale problem, see Theorem \ref{theo3.2}. We prove in Section 4 that this diffusion process has a unique stationary distribution, see Theorem \ref{theo4.1}, and we provide a martingale problem characterization for the law $\wt{P}$ of the stationary process, see Proposition \ref{prop4.2} (which is useful in the context of the convergence in law that we prove in \cite{SzniWidm26b}).

\medskip
The stationary distribution mentioned above is typically unknown, but when all $\delta_\ell$ coincide it is explicit. For instance, when all $\delta_\ell = 0$, it coincides with the image measure of $\mu$ under the map: $x \longrightarrow (|x|^2, |x|^2_{-1})$, see Remark \ref{rem4.5}.

\medskip
In the companion article \cite{SzniWidm26b}, as a ``proof of concept'', we establish that $\wt{P}$ mentioned above, is the limit in law of the process $(|X^\ve_t |^2, |X^\ve_t|^2_{-1})_{t \ge 0}$, as $\ve \rightarrow 0$, when $X^\ve_t, t \ge 0$, is the stationary solution of the Stratonovich differential equation on $\IR^N$:
\begin{equation}\label{0.4}
dX^\ve_t = \big( -\Lambda X^\ve_t + \mbox{\f $\dis\frac{1}{\ve}$} \, b(X^\ve_t)\big) \,dt + \dis\sum_\ell \big(\lambda_\ell \, a(1 + \delta_\ell)\big)^{\frac{1}{2}} e_\ell \,\, d \beta_\ell(t) + \Big(\mbox{\f $\dis\frac{\kappa}{\ve}$} \Big)^{\frac{1}{2}} \dis\sum^M_1 Z_m(X^\ve_t) \, \circ d \wt{\beta}_m(t),
\end{equation}
where $(\Lambda x)_\ell = \lambda_\ell \, x_\ell$, for $x \in \IR^N$, $1 \le \ell \le N$, $(e_\ell)_{1 \le \ell \le N}$ is the canonical basis of $\IR^N$, $\beta_\ell$, $1 \le \ell \le N$, $\wt{\beta}_m, 1 \le m \le M$, are independent Brownian motions, and $\kappa > 0$ measures the strength of the stirring and can be chosen arbitrarily small (and the limit law $\wt{P}$ does not depend on $\kappa$). Moreover, $b(x)_\ell$ is for each $\ell$ a quadratic form in the coordinates of $x$, such that ${\rm div} \, b = 0$, $\langle x, b(x) \rangle = 0$, $\langle x, b(x)\rangle_{-1} = 0$ on $\IR^N$ (with $\langle \cdot, \cdot \rangle$ and $\langle \cdot, \cdot \rangle_{-1}$ the scalar products attached to $| \cdot |$ and $| \cdot |_{-1}$). As for the stirring vector fields $Z_m$, $1 \le m \le M$, they are constructed in \cite{SzniWidm26b}, and satisfy ${\rm div} \,Z_m = 0$, $\langle x, Z_m(x)\rangle = 0$, $\langle x, Z_m(x)\rangle_{-1} = 0$ on $\IR^N$. They play the role of a regularization. 

\medskip
As mentioned above, the convergence to the same limit law $\wt{P}$ holds for all values of $\kappa > 0$ in (\ref{0.4}), and hence the convergence in law to $\wt{P}$ holds as well for some (non-explicit) function $\kappa_\ve > 0$ tending to $0$ with $\ve$. When $\kappa = 0$,  the equation corresponding to (\ref{0.4}) covers by suitable choices of $b(x)$ some instances of a Galerkin-Navier-Stokes equation with Brownian forcing, see Remark 1.1 of \cite{SzniWidm26b}. It would be of substantial interest to gain information on the speed at which one can let $\kappa_\ve$ tend to $0$, i.e. how much regularization is needed, for the above mentioned convergence to $\wt{P}$ to hold.

\medskip
In the present article, we obtain a remarkable condensation bound for the law $\wt{P}$ of the stationary diffusion in the open cone $\stackrel{_\circ}{C}$. It can be read in the perspective of the inviscid limit in law statement mentioned above (\ref{0.4}). We introduce two positive numbers related to the Brownian forcing:
\begin{equation}\label{0.5}
B_0 = a \sum_\ell \, (1 + \delta_\ell) < B_1 = a \sum_\ell \, \lambda_\ell (1 + \delta_\ell),
\end{equation}    
and we denote by $(U_0, V_0)$ the location at time $0$ of the stationary diffusion on $\stackrel{_\circ}{C}$ governed by $\wt{P}$ (so that $U_0 > V_0 > 0, \wt{P}$-a.s.). We show in Theorem \ref{theo5.1} a stronger form of the following inequality:
\begin{equation}\label{0.6}
\begin{array}{l}
\mbox{for any $\ell_0$ in $\{3, \dots, N\}$, $2 \wt{E} [ U_0 - V_0] \le \dis\frac{B_1 - B_0}{\lambda_{\ell_0} - 1} + \dis\frac{\lambda_3}{\lambda_3 - 1} \; \dis\frac{\ell_0}{N- \ell_0} \; B_0$}, 
\\[1ex]
\mbox{with $B_0 = 2 \wt{E} [U_0]$}
\end{array}
\end{equation}
(and $\wt{E} [ \cdot]$ the $\wt{P}$-expectation).

\bigskip
In particular, when $\ell_0$ can be chosen so that $B_1 / B_0$ is much smaller than $\lambda_{\ell_0}$ and $\ell_0$ is much smaller than $N$, then $\wt{E} [U_0 - V_0] \,/ \, \wt{E}[U_0]$ is close to $0$, corresponding to ``condensation''. The value $B_1 / B_0$ can be viewed as a kind of effective spectral value for the Brownian forcing, and the bound (\ref{0.6}) is thus informative when this effective spectral value of the forcing is much smaller than $\lambda_N$, see also Remark 6.2 of \cite{SzniWidm26b}.

\medskip
As it turns out, the proof of the condensation bound in Theorem \ref{theo5.1} relies on the surprizing \textit{monotonicity property} of the functions $q_\ell$, as $\ell$ varies in $\{3, \dots, N\}$, see Theorem \ref{theo2.3}. In particular, it implies that the functions $q_\ell$ with $\ell$ small compared to $N$ have size at most of order $1/N$. This feature links the Gaussian measure $\mu$ to the condensation effect for the possibly highly non-Gaussian measure corresponding to the stationary distribution of the diffusion in $\stackrel{_\circ}{C}$, when $B_1 / B_0$ is much smaller than $\lambda_N$.

\medskip
As an aside, ``condensation'' is known to occur under the Gaussian measure, see Section 3.1 of \cite{BoucCorv10}. In essence, fixing an infinite sequence $\lambda_\ell$ tending to infinity, in a large $N$ regime, under the measure $\mu$, when $0 < v < u$ are given, and one ``forces $|x|^2/N$ and $|x|^2_{-1}/N$ to respectively take values close to $u$ and $v$'', then $(x^2_1 + x^2_2)/N$ is close to $v$ (and the ratio of the sum of the remaining $x^2_\ell$ with $N$ is close to $u-v$). In the present article, we see that Gaussian measure also induces condensation for the stationary distribution of some of the diffusions that we consider; but this effect now happens through the properties of the functions $q_\ell$, notably Theorem \ref{theo2.3}.

\medskip
Let us finally describe the organization of the article. 

\medskip
Section 1 presents the set-up. Section 2 collects some properties of the functions $q_\ell$, imported in part from Appendix B, and proves the important monotonicity property in Theorem \ref{theo2.3}. Section 3 constructs the diffusion process in Theorem \ref{theo3.2}, and establishes a Lyapunov-Foster condition in Proposition \ref{prop3.3}. Section 4 is concerned with the stationary distribution of the diffusion process, and collects some of its properties. Section 5 is devoted to the ``condensation bound'', see Theorem \ref{theo5.1}. Appendix A contains some probability density and volume calculations. Appendix B establishes some of the properties of the functions $q_\ell$.

\bigskip\bigskip\n
{\bf Acknowledgment:} The authors wish to thank Tobias Rohner for extensive simulations of the incompressible Navier-Stokes equation with Brownian forcings on a two-dimensional torus.

\bigskip
\section{The set-up}
\setcounter{equation}{0}

In this section we introduce some notation and various objects that we will consider in what follows.

\medskip
We have an even integer
\begin{equation}\label{1.1}
N = 2 n, \; \mbox{with $n \ge 4$}
\end{equation}
(in Appendix A we will only assume $n \ge 3$), as well as the positive $\lambda_\ell, 1 \le \ell \le N$, such that
\begin{equation}\label{1.2}
1 = \lambda_1 = \lambda_2 < \lambda_3 = \lambda_4 < \dots < \lambda_{2 i -1} = \lambda_{2i} < \dots < \lambda_{N-1} = \lambda_N,
\end{equation}
and the scalar products on $\IR^N$ associated to the square norms:
\begin{equation}\label{1.3}
|x|^2 = \sum_\ell \, x_\ell^2  \ge |x|^2_{-1} = \sum \; \dis\frac{x^2_\ell}{\lambda_\ell}, \; \mbox{for $x \in \IR^N$}
\end{equation}
(the corresponding scalar products are denoted by $\langle \cdot, \cdot \rangle$ and $\langle \cdot, \cdot\rangle_{-1}$). We sometimes refer to the coordinates $x_\ell$ of $x$ as $\textit{modes}$, $x_1$ and $x_2$ being the $\textit{lowest modes}$, see also Remark \ref{rem1.1} below.  While (\ref{1.2}) is somewhat restrictive in view of the typical example we have in mind in Remark \ref{rem1.1}, it offers a convenient setting to develop the results of Section 2: it allows us to avoid having to right away handle some possible degeneracies of some of the $\Delta_i$, $1 < i \le n$ in (\ref{2.1}). We often use the notation
\begin{equation}\label{1.4}
\mbox{$\mu_i = \lambda_{2i}$, for $1 \le i \le n$ (and $n$ as in (\ref{1.1}))}.
\end{equation}
Further, we have a Gaussian measure $\mu$ on $\IR^N$ corresponding to
\begin{equation}\label{1.5}
a > 0,
\end{equation}
and 
\begin{equation}\label{1.6}
d \mu = (\pi a)^{-N/2} \exp \big\{-\dis\frac{|x|^2}{a} \big\} \;dx
\end{equation}
so that under $\mu$ the coordinates $x_\ell, 1 \le \ell \le N$, are i.i.d. centered Gaussian variables with variance~$\frac{a}{2}$.

\medskip
In addition, we consider coefficients
\begin{equation}\label{1.7}
\mbox{$\delta_\ell \in (-1, 0]$, $1 \le \ell \le N$, such that $\delta_{2i} = \delta_{2i-1}$, for $1 \le i \le n$}.
\end{equation}
They enter the definition of the diffusion operator
\begin{equation}\label{1.8}
\wt{L} = \sum_\ell \, \lambda_\ell \,\Big(\frac{a}{2} \;(1 + \delta_\ell) \, \partial^2_\ell - x_\ell \, \partial_\ell\Big)
\end{equation}
corresponding to independent Ornstein-Uhlenbeck processes for each coordinate, with respective stationary variances $\frac{a}{2} (1 + \delta_\ell)$ and speed-up factors $\lambda_\ell$.

\begin{remark}\label{rem1.1}
\rm The typical example we have in mind corresponds to a ``Galerkin projection'', when one considers a possibly thin two-dimensional torus $\IT = (\IR / 2 \pi \mu \IZ) \times (\IR / 2 \pi \IZ)$, with $0 < \mu \le 1$, and $\lambda_\ell$, $1 \le \ell \le N$, are eigenvalues of the Laplacian on the torus, $\varphi_\ell$ are $L^2(\IT)$-normalized pairwise orthogonal, and orthogonal to constants, eigenfunctions of the Laplacian attached to the $\lambda_\ell$, $1\le \ell \le N$, and the space is chosen so that (\ref{1.1}) and (\ref{1.2}) hold. Then $x = (x_\ell)$ in $\IR^N$ corresponds to the coordinates of a function $\varphi$ in the space generated by the function $(\varphi_\ell)_{1 \le \ell \le N}$, $|x|$ to the $L^2$-norm of $\varphi$, $|x|_{-1}$ to the $H^{-1}$-norm $\langle \varphi, (-\Delta)^{-1} \,\varphi \rangle^{1/2}_{L^2(\IT)}$ of $\varphi$, so that $|x|^2$ and $|x|^2_{-1}$ correspond to twice the enstrophy and the energy of the function $\varphi$. Further, in this context, $\wt{L}$ corresponds to the generator of the diffusion obtained by looking at the coordinates in the basis $(\varphi_\ell)_{1 \le \ell \le N}$ of the solution of the stochastic differential equation with Brownian forcing
\begin{equation}\label{1.9}
d \varphi_t = \Delta \varphi_t \, dt + \sum_1^N \,(\lambda_\ell \, a\big(1 + \delta_\ell)\big)^{\frac{1}{2}} e_\ell \,\,d \beta_\ell(t),
\end{equation}

\n
where $\beta_\ell$, $1 \le \ell \le N$, are independent Brownian motions, and $e_\ell$, $1 \le \ell \le N$, the canonical basis of $\IR^N$.

\medskip
In the companion article \cite{SzniWidm26b}, a strong perturbation of this linear equation with forcing will be considered. It will cover cases of a Galerkin projection on the space generated by the $\varphi_\ell$, $1 \le \ell \le N$, for the two-dimensional Navier-Stokes equation in vorticity form, with some (arbitrarily small) stirring, playing the role of a regularization, and the same Brownian forcing as in (\ref{1.9}) above. The stationary two-dimensional diffusion we construct in the present article will appear as the inviscid limit evolution of (twice) the enstrophy-energy process for the stationary process that we construct in \cite{SzniWidm26b}, see also (\ref{0.4}). \hfill $\square$
\end{remark}

We now describe the effect of $\wt{L}$ on functions solely depending on $|x|^2$ and $|x|^2_{-1}$. Namely, when $\psi (u,v)$ is a $C^2$-function on $\IR^2$, setting $g(x) = \psi(|x|^2, |x|^2_{-1})$, for $x$ in $\IR^N$, one has
\begin{equation}\label{1.10}
\begin{split}
\wt{L} g(x) = & \; 2a \Big\{ \sum_\ell \, \lambda_\ell (1 + \delta_\ell) \,x^2_\ell \, \partial_u^2 \psi + 2 \sum_\ell (1 + \delta_\ell) \, x^2_\ell \, \partial^2_{u,v} \psi + \sum_\ell \dis\frac{1}{\lambda_\ell} \;(1 + \delta_\ell) \, x_\ell^2 \, \partial^2_v \psi\Big\}
\\
& + \Big(a \sum_\ell \,\lambda_\ell(1 + \delta_\ell) - 2 \sum_\ell \,\lambda_\ell \, x_\ell^2\Big) \, \partial_u \psi + \big(a \sum_\ell (1 + \delta_\ell) - 2 \sum_\ell \, x_\ell^2\Big) \, \partial_v \psi
\end{split}
\end{equation}
(and the partial derivatives of $\psi$ are evaluated at $(|x|^2, |x|^2_{-1})$).

\medskip
The function $x \longrightarrow (|x|^2, |x|^2_{-1})$ naturally takes values in the closed cone $C$ of $\IR^2$:
\begin{equation}\label{1.11}
C = \{ w = (u,v) \in \IR^2; \, 0 \le v \le u \le \lambda_N v\} \,.
\end{equation}

\n
In essence, we will replace the $x^2_\ell$, $1 \le \ell \le N$, in (\ref{1.9}) by ``good versions'' $q_\ell (u,v)$ of the conditional expectations $E^\mu[x^2_\ell \,\big| \, |x|^2 = u, |x|^2_{-1} = v]$, and we will see in Sections 3 and 4 that one obtains a diffusion on the interior $\stackrel{_\circ}{C}$ of $C$, which has a unique stationary distribution, see Theorems \ref{theo3.2} and \ref{theo4.1}.

\section{Some properties of the functions $q_\ell$}
\setcounter{equation}{0}

In this section we collect from Appendix B some useful properties of the non-negative functions $q_\ell$ on $C$, $1 \le \ell \le N$, which provide ``good versions'' of the conditional expectations under $\mu$ of $x^2_\ell$ given $|x|^2 = u$ and $|x|^2_{-1} = v$. In Lemma \ref{lem2.1} we prove that they are Lipschitz functions, and in Theorem \ref{theo2.3} we derive a monotonicity property and an upper bound on the functions $q_\ell$, $3 \le \ell \le N$, which will play an important role in Section 5 when deriving the condensation bound in Theorem \ref{theo5.1}

\medskip
We first collect some facts from Appendix $B$, see Lemma \ref{lemB.1}, Proposition \ref{propB.2}, (\ref{B.17}). We recall the notation (\ref{1.4}) for $\mu_i$ and (\ref{1.11}) for $C$.
\begin{equation}\label{2.1}
\begin{array}{l}
\mbox{For $1 \le \ell \le N$, $q_\ell$ is a continuous non-negative function on $C$, which does not}\\
\mbox{depend on the parameter $a$ in (\ref{1.5}), homogeneous of degree $1$, positive on $\stackrel{_\circ}{C}$,}\\
\mbox{which on each open sector $\Delta_i = \{w = (u,v) \in C$; $0 < \mu_{i-1} v < u < \mu_i v\}$, $1 < i \le n$}\\
\mbox{coincides with the ratio of an homogeneous polynomial of degree $(n-1)$ in}\\
\mbox{$u,v$ with an homogeneous polynomial of degree $(n-2)$ in $u,v$.}
\end{array}
\end{equation}
Further,
\begin{align}
& q_{2i} = q_{2i-1}, \; \mbox{for $1 \le i \le n$ (and we write $\wh{q}_i = q_{2i} + q_{2i - 1})$}, \label{2.2}
\\ 
& u = \sum_1^N \,q_\ell(u,v), \, v = \sum_1^N \; \dis\frac{1}{\lambda_\ell} \;q_\ell (u,v), \; \mbox{for all $(u,v) \in C$}. \label{2.3}
\end{align}
In addition, one has the boundary values
\begin{align}
& q_\ell (u,v) = 0 \; \mbox{for $u = v$ or $u = \lambda_N v$, when $3 \le \ell \le N-2$, and $u \ge 0$}, \label{2.4}
\\[1ex]
&q_\ell (u,u) = \dis\frac{u}{2} \; \mbox{and $q_\ell \,(u, u/\lambda_N) = 0$, when $\ell = 1$ or $2$, and $u \ge 0$}, \label{2.5}
\\[1ex]
& q_\ell(u,u) = 0 \; \mbox{and $q_\ell \, (u, u/\lambda_N) = \dis\frac{u}{2}$, when $\ell = N - 1$ or $N-2$, and $u \ge 0$}, \label{2.6}
\end{align}

\n
Importantly, the $q_\ell$ (which do not depend on $a$) provide regular versions of the conditional expectation of $x^2_\ell$ under $\mu$, given $|x|^2 = u$ and $|x|^2_{-1} = v$. Namely, one has
\begin{equation}\label{2.7}
E^\mu \big[x^2_\ell \, \big| \, |x|^2, |x|^2_{-1}\big] = q_\ell (|x|^2, |x|^2_{-1}), \;\mbox{$\mu$-a.s., for $1 \le \ell \le N$.}
\end{equation}
We also refer to Remark A.2 of the companion article \cite{SzniWidm26b} for a sharper version of (\ref{2.7}). The next lemma will play an important role in the analysis of the diffusion process constructed in Section 3 and of its stationary distribution in Theorem \ref{4.1}.

\begin{lemma}\label{lem2.1}
\begin{equation}\label{2.8}
\mbox{For each $1 \le \ell \le N$, $q_\ell$ is a Lipschitz function on $C$.}
\end{equation}
\end{lemma}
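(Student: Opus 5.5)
We reduce to the unit segment using homogeneity, and then argue sector by sector. Since by (\ref{2.1}) each $q_\ell$ is continuous on $C$ and homogeneous of degree $1$, we write $q_\ell(u,v) = v\, f_\ell(u/v)$ for $v>0$, with $f_\ell(s) = q_\ell(s,1)$ continuous on $[1,\lambda_N]$. At the apex we have $q_\ell(0,0)=0$, and the cone $C$ minus the origin is parametrized by $(s,v)\in[1,\lambda_N]\times(0,\infty)$. A homogeneous degree-one function is Lipschitz on the convex cone $C$ as soon as its gradient is bounded on $\{u,v\}$-points of $C\setminus\{0\}$ where it is differentiable, and the gradient is homogeneous of degree $0$. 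Explicitly, $\partial_u q_\ell = f_\ell'(s)$ and $\partial_v q_\ell = f_\ell(s) - s f_\ell'(s)$. The plan is therefore to show that $f_\ell$ is Lipschitz on $[1,\lambda_N]$, i.e. that $f_\ell$ is continuous and piecewise $C^1$ with bounded derivative. Continuity of $f_\ell$ together with the convexity of $C$ then gives the global Lipschitz bound: on any segment in $C$, the function $q_\ell$ is absolutely continuous with derivative bounded by $\sup|\nabla q_\ell|$. The apex and the finitely many rays $u=\mu_i v$ are negligible for segments, and a segment passing through the origin is handled by continuity.

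On each open sector $\Delta_i$, $1<i\le n$, (\ref{2.1}) gives $f_\ell(s) = P_i(s)/Q_i(s)$, with $P_i$ a polynomial of degree at most $n-1$ and $Q_i$ a polynomial of degree at most $n-2$. It thus suffices to show that this rational function extends to a $C^1$ function on the closed interval $[\mu_{i-1},\mu_i]$. Equivalently, we need that $Q_i$ does not vanish on $[\mu_{i-1},\mu_i]$, or, if it does vanish at an endpoint, that the zero cancels with $P_i$ to sufficient order. This is the main obstacle. The property must be extracted from the explicit formulas of Appendix B (Lemma \ref{lemB.1}, Proposition \ref{propB.2}), where $q_\ell$ is obtained as a ratio of densities of $(|x|^2,|x|^2_{-1})$. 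Concretely, the denominator is the joint density of $(|x|^2,|x|^2_{-1})$ computed in Appendix A. Using $\lambda_{2i-1}=\lambda_{2i}$, the pairs $x_{2i-1}^2+x_{2i}^2$ are exponential variables, so this density is, up to the factor $e^{-u/a}$, a divided-difference type B-spline in $s$. It is a piecewise polynomial that is positive on the interior $(1,\lambda_N)$. The numerator is the analogous spline with one extra degree, arising from the size-biased weight $x_\ell^2$.

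The delicate points are therefore the endpoints $s=1$ and $s=\lambda_N$, where the denominator density vanishes. At $s=1$ only the lowest mode survives, and near $s=1$ the denominator behaves like $(s-1)^{n-2}$ times a positive constant. The numerator, being the same kind of spline with the weight $x_\ell^2$, vanishes to order $n-1$ for $\ell\ge3$, and to order $n-2$ for $\ell=1,2$, with the matching leading coefficient giving $u/2$ as in (\ref{2.5}). The ratio is then a rational function with no pole at $s=1$, hence analytic up to the endpoint. The same analysis applies at $s=\lambda_N$, consistently with (\ref{2.4}) and (\ref{2.6}).

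At the interior breakpoints $s=\mu_i$, the denominator is strictly positive, since the density is positive inside the cone. Hence each of the two adjacent rational expressions is smooth up to $\mu_i$, and continuity of $f_\ell$ across the breakpoint, which follows from (\ref{2.1}), suffices. Combining these facts, $f_\ell'$ is bounded on $[1,\lambda_N]$, which yields (\ref{2.8}).
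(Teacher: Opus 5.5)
Your reduction is the same as the paper's: write $q_\ell(u,v)=v\,f_\ell(u/v)$ with $f_\ell=q_\ell(\cdot,1)$, use that $f_\ell$ is rational on each $[\mu_{i-1},\mu_i]$, bound $\partial_u q_\ell=f_\ell'(s)$ and $\partial_v q_\ell=f_\ell(s)-sf_\ell'(s)$ off the finitely many rays $u=\mu_i v$, and conclude from continuity. You depart from the paper only at the step you call the main obstacle, and there your argument has a hole. The vanishing orders of numerator and denominator at the endpoints, and the ``B-spline'' structure of the numerator, are asserted rather than proved. At $s=\lambda_N$ you only say ``the same analysis applies.'' That would take real work near $u=\mu_n v$: for $\ell=N-1,N$ the integrand in (\ref{B.9}) involves $V^{n}$ evaluated near the apex of the reduced cone, so the orders there are not a mirror image of the $s=1$ computation.

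None of this is needed, and the fix is one line; it is the paper's proof. By (\ref{2.1}), $q_\ell$ is continuous on $C$, so $f_\ell$ is bounded on $[1,\lambda_N]$. Write the rational function representing $f_\ell$ on $(\mu_{i-1},\mu_i)$ in lowest terms. If its reduced denominator vanished at some $s_0\in[\mu_{i-1},\mu_i]$, the reduced numerator would be nonzero at $s_0$. Then $|f_\ell(s)|\to\infty$ as $s\to s_0$ within the interval, contradicting boundedness. So the reduced fraction has no pole on the closed interval, is smooth on a neighbourhood of it, and $f_\ell'$ is bounded on each open subinterval. Boundedness thus forces whatever cancellation is required at $s=1$ and $s=\lambda_N$, whatever the precise orders of vanishing. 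The positivity of $V$ at the interior breakpoints, which you argue correctly, is likewise superfluous. At $s=1$ you could also simply have used the explicit affine formulas (\ref{2.10}) on $\ov{\Delta}_2$.
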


\begin{proof}
Indeed, on each interval $[\mu_j, \mu_{j+1}]$, $1 \le j < n$, the rational function $r_\ell(\cdot) = q_\ell(\cdot, 1)$ cannot have in reduced form a pole in $[\mu_j, \mu_{j+1}]$, for otherwise $r_\ell$ would be unbounded in $[\mu_j, \mu_{j+1}]$, thus contradicting (\ref{2.1}). Hence, $q_\ell( \cdot, 1)$ has a bounded derivative on each interval $(\mu_j, \mu_{j+1})$. Moreover, $q_\ell(u,v) = v \, q_\ell(\frac{u}{v}, 1)$ for $(u,v)$ in $\stackrel{_\circ}{C}$, and since $u/v \le \lambda_N$,
\begin{equation}\label{2.9}
\begin{array}{l}
\partial_u \,q_\ell(u,v) = q^\prime_\ell \;\Big(\mbox{\f $\dis\frac{u}{v}$}, 1\Big)\; \mbox{and} \; \partial_v \, q_\ell (u,v) = q_\ell \; \Big(\mbox{\f $\dis\frac{u}{v}$}, 1\Big) - \mbox{\f $\dis\frac{u}{v}$} \; q^\prime_\ell \; \Big(\mbox{\f $\dis\frac{u}{v}$}, 1\Big)
\\
\mbox{are bounded functions on $\{(u,v) \in C$; $u \not= \mu_j v, 1 \le j \le n\} \stackrel{(\ref{2.1})}{=} \bigcup\limits_{1 \le i \le n} \Delta_i$}. 
\end{array}
\end{equation}
Combined with the continuity of the functions $q_\ell$, see (\ref{2.1}), the above boundedness of their gradients implies (\ref{2.8}).
\end{proof}

In the sub-sector of $C$ where $0 \le v \le u \le \mu_2 \,v$ (i.e.~$(u,v) \in \ov{\Delta}_2$, see (\ref{2.1})), one has explicit formulas for the functions $q_\ell$, see (\ref{B.27}), namely:
\begin{equation}\label{2.10}
\begin{array}{l}
q_\ell (u,v) = \mbox{\f $\dis\frac{1}{N-2}$} \; \mbox{\f $\dis\frac{u-v}{1 - \lambda^{-1}_\ell}$}\;\;\mbox{for $3 \le \ell \le N$ and}
\\[2ex]
q_\ell (u,v) = \mbox{\f $\dis\frac{v}{2}$} - \mbox{\f $ \dis\frac{1}{2} \dis\frac{1}{N-2}$}  \; \dis\sum_{m \ge 3} \; \mbox{\f $\dis\frac{u-v}{\lambda_m - 1}$} \; \mbox{for $\ell = 1,2$, when $(u,v) \in \ov{\Delta}_1$}, 
\end{array}
\end{equation}

\begin{remark}\label{rem2.10} \rm
Note that $u \le \lambda_3 \, v$ by assumption in (\ref{2.10}), so that
\begin{equation*}
q_1 (u,v) = q_2 (u,v) \ge \mbox{ \f $\dis\frac{v}{2}$} - \mbox{\f $\dis\frac{1}{2} \dis\frac{1}{N-2}$} \;  \dis\sum_{\ell \ge 3}\; \mbox{\f $\dis\frac{\lambda_3- 1}{\lambda_\ell - 1}$} \; v \ge 0.
\end{equation*}
Also, the formulas (\ref{2.10}) cannot be expected to extend to the entire cone $C$, for they do not satisfy the boundary conditions (\ref{2.4}) - (\ref{2.6}). \hfill $\square$
\end{remark}

The next proposition plays a crucial role in the proof of the condensation bound in Theorem \ref{theo5.1}. In particular, it shows that the functions $q_\ell$ of low index $\ell$ are small when $N$ is large. An important ingredient to this effect is the monotonicity property (\ref{2.11}) in the theorem below. We recall the notation $\wh{q}_i$ in (\ref{2.2}).

\begin{theorem}\label{theo2.3} {\it (Monotonicity and upper bound)}
\begin{align}
& \mbox{$(1 - \mu_i^{-1}) \; \wh{q}_i (u,v)$ is non-decreasing in $i$ for $2 \le i \le n$ and $(u,v) \in C$}. \label{2.11}
\\[1ex]
& \wh{q}_i \, (u,v) \le [(n - i + 1) (1 - \mu_i^{-1})]^{-1} (u-v), \;\mbox{for $2 \le i \le n$ and $(u,v) \in C$}. \label{2.12}
\end{align}
\end{theorem}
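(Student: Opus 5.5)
The plan is to pass to the ``radial'' variables $R_i = x_{2i-1}^2 + x_{2i}^2$, $1 \le i \le n$, and read $\wh{q}_i(u,v)$ as the expectation of $R_i$ under the uniform measure on a polytope. Under $\mu$ the $R_i$ are i.i.d.\ exponential with mean $a$, since $x_{2i-1}, x_{2i}$ are independent centered Gaussians with variance $a/2$. Their joint density on $\IR_+^n$ is proportional to $\exp\{-\sum_i r_i/a\}$. On the set $\{\sum_i r_i = u,\ \sum_i r_i/\mu_i = v\}$ the first constraint fixes the exponent, so the density is constant there.

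Hence the conditional law of $(R_i)$ given $|x|^2 = u$, $|x|^2_{-1} = v$ is the normalized $(n-2)$-dimensional Lebesgue measure on the polytope
\[
P(u,v) = \Big\{r \in \IR_+^n;\ \sum_i r_i = u,\ \sum_i r_i/\mu_i = v\Big\}.
\]
Using (\ref{2.2}) and (\ref{2.7}), $\wh{q}_i(u,v) = E^\mu[R_i \,|\, |x|^2 = u, |x|^2_{-1} = v]$. I would first identify $\wh{q}_i(u,v)$ on $\stackrel{_\circ}{C}$ with the barycentric coordinate $\int_{P(u,v)} r_i \, dr \,/ \int_{P(u,v)} dr$. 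This is the representation behind Appendix B: both sides are continuous on $\stackrel{_\circ}{C}$ and agree a.e., so they agree everywhere there. I would then extend both claims (\ref{2.11}), (\ref{2.12}) to all of $C$ by the continuity in (\ref{2.1}).

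Next I change variables. Set $s_i = (1 - \mu_i^{-1}) r_i$ for $2 \le i \le n$. Subtracting the two constraints gives $\sum_{i \ge 2} s_i = u - v$, because the $i=1$ term vanishes as $\mu_1 = 1$. The coordinate $r_1 = u - \sum_{i\ge2} r_i$ is then determined, and the condition $r_1 \ge 0$ reads
\[
\sum_{i\ge 2} c_i s_i \le u, \qquad c_i = (1-\mu_i^{-1})^{-1}.
\]
Since $\mu_i$ is strictly increasing, $c_i$ is strictly decreasing in $i$. The map $r \mapsto (s_i)_{i\ge 2}$ is linear, so the uniform law on $P(u,v)$ becomes the uniform law on
\[
Q = \Big\{s \in \IR_+^{n-1};\ \sum_{i \ge 2} s_i = u - v,\ \sum_{i\ge 2} c_i s_i \le u\Big\}.
\]
Claim (\ref{2.11}) then becomes: $E_Q[s_i] \le E_Q[s_j]$ for $2 \le i < j \le n$.

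For this monotonicity I use a swap argument. Let $\tau$ exchange the coordinates $s_i$ and $s_j$; it preserves the simplex and Lebesgue measure. If $s_i > s_j$, then
\[
c_i s_j + c_j s_i < c_i s_i + c_j s_j,
\]
because $(c_i - c_j)(s_i - s_j) > 0$. Hence $\tau$ maps $Q \cap \{s_i > s_j\}$ injectively and measure-preservingly into $Q \cap \{s_j > s_i\}$. Writing $E_Q[s_j - s_i]$ as the integral over $\{s_j > s_i\}$ minus the integral over $\{s_i > s_j\}$, and transporting the latter by $\tau$, the difference becomes the integral of the nonnegative function $s_j - s_i$ over $(Q \cap \{s_j > s_i\}) \setminus \tau(Q \cap \{s_i > s_j\})$. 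This gives (\ref{2.11}).

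For (\ref{2.12}), I use monotonicity together with the sum constraint:
\[
(n-i+1)\,(1-\mu_i^{-1})\,\wh{q}_i \le \sum_{k=i}^n (1-\mu_k^{-1})\,\wh{q}_k \le \sum_{k \ge 2} E_Q[s_k] = u - v.
\]
Equivalently, this follows from (\ref{2.3}), since $\sum_{k\ge2}(1-\mu_k^{-1})\wh{q}_k = u - v$.

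I expect the main obstacle to be the rigorous pointwise identification of $\wh{q}_i$ with the polytope barycenter rather than the combinatorial swap. Two points need care: nondegeneracy of $P(u,v)$ for $(u,v) \in \stackrel{_\circ}{C}$, which holds since some $r$ has all coordinates positive, and compatibility with the ``good version'' fixed in Appendix B. I would handle the latter by continuity of both functions in $\stackrel{_\circ}{C}$, via a coarea/disintegration argument for the a.e.\ equality.
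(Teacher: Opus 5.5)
Your proposal is correct and follows essentially the paper's own proof. It uses the same rescaling $\wt s_i=(1-\mu_i^{-1})s_i$ onto the hyperplane $\sum_{i\ge 2}\wt s_i=u-v$, the same swap argument, and the same deduction of (\ref{2.12}) from (\ref{2.13}); your constraint $\sum_{i\ge2}c_i s_i\le u$ with $c_i$ decreasing is, on that hyperplane, equivalent to the paper's $\sum_{i\ge2}\gamma_i\wt s_i\ge u-\mu_2 v$ with $\gamma_i$ increasing (both say $r_1\ge 0$), and your explicit computation of $E_Q[s_j-s_i]$ is a cleaner rendering of the paper's step (\ref{2.24}). The only other difference is cosmetic: you recover the barycenter representation of $\wh q_i$ from (\ref{2.7}) by continuity, where the paper takes it from the definitions (\ref{B.9}), (\ref{B.17}).
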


\begin{proof}
Note that by (\ref{2.3}) one has
\begin{equation}\label{2.13}
\dis\sum^n_2 \;(1 - \mu^{-1}_i) \; \wh{q}_i \,(u,v) = u-v, \; \mbox{for all $(u,v) \in C$}.
\end{equation}
All the summands in the left member above are non-negative and (\ref{2.12}) is an immediate consequence of the monotonicity property (\ref{2.11}). To complete the proof of Theorem \ref{theo2.3}, we thus only need to show (\ref{2.11}). Due to the continuity of the $\wh{q}_i$, it suffices to prove (\ref{2.11}) when $(u,v) \in \; \stackrel{_\circ}{C}$.

\medskip
To this end, we introduce $\wt{\Sigma} = \{2,\dots,n\}$ (we recall that $\Sigma = \{3,\dots, n\}$, see (\ref{A.4}) and that $\mu_1 = 1$, see (\ref{1.4})) and define the affine isomorphism $h$ between $\IR^\Sigma$ and the affine hyperplane $\wt{H} = \{\wt{\sigma} = (\wt{s}_i)_{2 \le i \le n} \in \IR^{\wt{\Sigma}}$; $\Sigma_2^n \, \wt{s}_i = u-v\}$:
\begin{equation}\label{2.14}
\begin{array}{l}
h(\sigma) = \wt{\sigma} \; \mbox{with} \; \sigma = (s_i)_{3 \le i \le n}, \; \wt{\sigma} = (\wt{s}_i)_{2 \le  i \le n}, \; \mbox{and}
\\[1ex]
\wt{s}_i = (1 - \mu^{-1}_i) \; s_i, \; \mbox{for} \; 3 \le i \le n, \; \wt{s}_2 = u-v - \sum^n_3 (1- \mu_i^{-1}) \, s_i .
\end{array}
\end{equation}
Then, $h$ maps $\IV_{u,v} = \{(s_3,\dots, s_n) \in \IR^\Sigma_+$; $u - \mu_1 v \ge \Sigma^n_3 \,(1 - \frac{\mu_1}{\mu_i})\, s_i$ and $u - \mu_2 v \le \Sigma^n_3 \,(1 - \frac{\mu_2}{\mu_i}) \,s_i\}$, see (\ref{A.5}), one-to-one onto
\begin{equation}\label{2.15}
\begin{array}{l}
\wt{\IV}_{u,v} = \{ \wt{\sigma} = (\wt{s}_i)_{2 \le i \le n} \in \IR^{\wt{\Sigma}}_+; \; \Sigma_2^n \, \wt{s}_i = u-v \;\,\mbox{and} \; \,\Sigma_2^n \, \gamma_i \,\wt{s}_i \ge u - \mu_2 v\} \subseteq \wt{H},
\\[1ex]
\mbox{with} \; \gamma_i = \mbox{\f $\dis\frac{1 - \mu_2/\mu_i}{1 - \mu_1/\mu_i}$} = 1 -  \mbox{\f $\dis\frac{\mu_2-\mu_1}{ \mu_i - \mu_1}$} , \; \mbox{for $2 \le i \le n$ (and we recall that $\mu_1 = 1)$}.
\end{array}
\end{equation}
Note that the $\gamma_i$ are non-decreasing in $i$ and that $\gamma_2 = 0$. Further, $h$ maps the Lebesgue measure on $\IR^\Sigma$ into a constant multiple of the surface measure on the hyperplane $\wt{H}$, so that $h$ maps the barycenter of $\IV_{u,v}$ on the barycenter of $\wt{\IV}_{u,v}$, which we denote by $\wt{q}(u,v) = (\wt{q}_i (u,v))_{2 \le i \le n}$. We thus see by (\ref{B.16}) that
\begin{equation} \label{2.16}
 h\big(\wh{q}(u,v)\big) = \wt{q}(u,v), \; \mbox{with the notation $\wh{q} (u,v) = \big(\wh{q}_i (u,v)\big)_{3 \le i \le n}$, i.e.}
 \end{equation}
\begin{equation}\label{2.17}
\begin{split}
 \wt{q}_i (u,v)  &= (1 - \mu^{-1}_i) \, \wh{q}_i (u,v), \; \mbox{for $3 \le i \le n$},
\\
 \wt{q}_2(u,v) &= u-v - \Sigma^n_3 \,(1 - \mu_i^{-1})\, \wh{q}_i(u,v) \stackrel{(\ref{B.17})}{=} (1 - \mu^{-1}_2) \; \wh{q}_2 (u,v).  
 \end{split}
\end{equation}

\medskip\n
The claim (\ref{2.11}) thus amounts to showing that
\begin{equation}\label{2.18}
\mbox{$\wt{q}_i (u,v), 2 \le i \le n$, is non-decreasing in $i$}.
\end{equation}

\n
For this purpose we consider $i_0 \in \{2, \dots, n-1\}$ and the map $S$ swapping the $i_0$ coordinate with the $i_0 + 1$ coordinate on $\IR^{\wt{\Sigma}}$, namely, such that for $\wt{\sigma} = (\wt{s}_i)_{2 \le i \le n}$
\begin{equation}\label{2.19}
\begin{split}
S(\wt{\sigma})_i & = \wt{s}_i, \; \mbox{when $i \in \wt{\Sigma}\; \backslash\,\{i_0, i_0 + 1\}$},
\\
& = \wt{s}_{i_0 + 1}, \; \mbox{when $i = i_0$},
\\
& = \wt{s}_{i_0}, \; \mbox{when $i = i_0 + 1$}.
\end{split}
\end{equation}
We also introduce (see (\ref{2.15}) for notation)
\begin{equation}\label{2.20}
\wt{\IV}_{i_0, +} = \{\wt{\sigma} \in \wt{\IV}_{u,v}; \; \wt{s}_{i_0} < \wt{s}_{i_0 + 1}\}, \; \wt{\IV}_{i_0,-} = \{\wt{\sigma} \in \wt{\IV}_{u,v}; \; \wt{s}_{i_0 + 1} < \wt{s}_{i_0}\}.
\end{equation}
We then note that
\begin{equation}\label{2.21}
\begin{array}{l}
\mbox{$\wt{\IV}_{i_0,+}$, $\wt{\IV}_{i_0,-}$ are pairwise disjoint convex polytopes contained in $\wt{\IV}_{u,v}$}, 
\\
\mbox{and their union differs from $\wt{\IV}_{u,v}$ by a negligible set}.
\end{array}
\end{equation}
The key observation is now that
\begin{equation}\label{2.22}
\mbox{$S$ maps $\wt{\IV}_{i_0, -}$ into $\wt{\IV}_{i_0, +}$},
\end{equation}
because, see (\ref{2.15}) for notation,
\begin{equation}\label{2.23}
\gamma_{i_0} \, \wt{s}_{i_0} + \gamma_{i_0 + 1} \, \wt{s}_{i_0 + 1} < \gamma_{i_0} \, \wt{s}_{i_0 + 1} + \gamma_{i_0 + 1} \, \wt{s}_{i_0} \; \mbox{when} \; \wt{s}_{i_0 + 1} < \wt{s}_{i_0},
\end{equation}

\n
since  their difference equals $(\gamma_{i_0 + 1} - \gamma_{i_0}) (\wt{s}_{i_0} - \wt{s}_{i_0 + 1}) > 0$, so that $S(\wt{\IV}_{i_0, -}) \subseteq \wt{\IV}_{u,v}$, and (\ref{2.22}) readily follows.

\medskip
The map $S$ preserves the surface measure on $\wt{H}$, and we thus see that
\begin{equation}\label{2.24}
\mbox{the barycenter $\wt{q}(u,v)$ of $\wt{\IV}_{u,v}$ belongs to the closure of $\wt{\IV}_{i_0, +}$}.
\end{equation}

\n
This shows that $\wt{q}_{i_0}(u,v) \le \wt{q}_{i_0 + 1}(u,v)$ for all $i_0$ in $\{2,\dots,n\}$ and $(u,v)$ in $\stackrel{_\circ}{C}$. The claim (\ref{2.18}) follows, and as already observed this completes the proof of Theorem \ref{theo2.3}.
\end{proof}

\begin{remark}\label{rem2.4} \rm In the case $0 \le v \le u \le \mu_2 v$, one has the explicit formulas, see (\ref{2.10}), $\wh{q}_i (u,v) = [(n-1) (1 - \mu_i^{-1})]^{-1} (u-v)$, for $2 \le i \le n$. So the upper bound (\ref{2.12}) has a good quality when $i$ remains small compared to $n$. Also, when $u = \mu_n v$, we know by (\ref{2.6}) that $\wh{q}_n (u,v) = u = (1-\mu_n^{-1})^{-1} (u-v)$, so that the upper bound (\ref{2.12}) is sharp in this case. \hfill $\square$

\end{remark}

\section{The effective diffusion}
\setcounter{equation}{0}

In this section, we construct by means of a martingale problem, see Theorem \ref{theo3.2}, the diffusion in the interior of the two-dimensional cone $C$ defined in (\ref{1.11}).  Its generator is formally obtained by replacing the terms $x^2_\ell$ in (\ref{1.10}) by the functions $q_\ell$ of Section 2. In the course of the proof we establish an important Lyapunov-Foster condition, see (\ref{3.21}) of Proposition \ref{prop3.3}.

\medskip
With this in mind we introduce the diffusion matrix $\wt{a}$ and the drift $\wt{b}$ via
\begin{equation}\label{3.1}
\wt{a} = 4a \left(\begin{array}{ll}
\Sigma_\ell \, \lambda_\ell (1 + \delta_\ell)\,q_\ell & \Sigma_\ell (1 + \delta_\ell)\,q_\ell
\\[1ex]
\Sigma_\ell \, (1 + \delta_\ell)\,q_\ell & \Sigma_\ell \; \frac{1}{\lambda_\ell} \; (1 + \delta_\ell)\,q_\ell
\end{array}\right), \; \wt{b}  =  \left(\begin{array}{ll}
a\, \Sigma_\ell \, \lambda_\ell (1 + \delta_\ell) - 2 \Sigma_\ell \,\lambda_\ell \,q_\ell\\[1ex]
a\, \Sigma_\ell \, (1 + \delta_\ell) - 2 \Sigma_\ell \,q_\ell
\end{array}\right)\,.
\end{equation}
The next lemma provides meaning to the above terminology.

\begin{lemma}\label{lem3.1}
\begin{align}
&\mbox{$\wt{a}$ and $\wt{b}$ are Lipschitz continuous functions, homogeneous of degree $1$ on $C$.} \label{3.2}
\\[1ex]
& \mbox{$\wt{a}$ is positive definite on $\stackrel{_\circ}{C}$.} \label{3.3}
\end{align}
\end{lemma}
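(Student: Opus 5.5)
The Lipschitz continuity and homogeneity in (\ref{3.2}) come directly from Section 2, and positive definiteness in (\ref{3.3}) will follow from a Cauchy--Schwarz argument whose equality case is ruled out by the positivity of the $q_\ell$ on $\stackrel{_\circ}{C}$.

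\emph{Step 1: (\ref{3.2}).} Every entry of $\wt{a}$ is a finite linear combination, with constant coefficients, of the functions $q_\ell$, $1 \le \ell \le N$. Every entry of $\wt{b}$ is a constant plus such a combination. By Lemma \ref{lem2.1} each $q_\ell$ is Lipschitz on $C$, so $\wt{a}$ and $\wt{b}$ are Lipschitz on $C$. By (\ref{2.1}) each $q_\ell$ is homogeneous of degree $1$, so $\wt{a}$ and the $q_\ell$-part of $\wt{b}$ are homogeneous of degree $1$. The constant terms $a\Sigma_\ell \lambda_\ell(1+\delta_\ell)$ and $a\Sigma_\ell(1+\delta_\ell)$ in $\wt{b}$ are not homogeneous of degree $1$. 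The homogeneity claim for $\wt{b}$ should therefore be read as referring to its $q$-dependent part, i.e.\ $\wt{b}$ is affine with a degree-$1$ homogeneous linear part. I will state this explicitly rather than gloss over it.

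\emph{Step 2: quadratic form of $\wt{a}$.} Fix $(u,v) \in \stackrel{_\circ}{C}$ and $\xi = (\xi_1,\xi_2) \in \IR^2$. Expanding gives
\[
\langle \xi, \wt{a}\,\xi\rangle = 4a \sum_\ell (1+\delta_\ell)\, q_\ell \,\lambda_\ell \Big(\xi_1 + \frac{\xi_2}{\lambda_\ell}\Big)^2 .
\]
This is a perfect square inside the sum, because $\lambda_\ell \xi_1^2 + 2\xi_1\xi_2 + \xi_2^2/\lambda_\ell = \lambda_\ell(\xi_1 + \xi_2/\lambda_\ell)^2$.

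\emph{Step 3: (\ref{3.3}).} Each weight $(1+\delta_\ell)\,q_\ell\,\lambda_\ell$ is strictly positive on $\stackrel{_\circ}{C}$. This uses $\delta_\ell > -1$ from (\ref{1.7}), $\lambda_\ell \ge 1$, and $q_\ell > 0$ on $\stackrel{_\circ}{C}$ from (\ref{2.1}). Hence $\langle \xi, \wt{a}\,\xi\rangle \ge 0$. Equality would force $\xi_1 + \xi_2/\lambda_\ell = 0$ for every $\ell$. By (\ref{1.2}) the $\lambda_\ell$ take at least two distinct values, e.g.\ $\lambda_1 = 1 < \lambda_3$. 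The equations $\xi_1 + \xi_2 = 0$ and $\xi_1 + \xi_2/\lambda_3 = 0$ then give $\xi_2(1 - \lambda_3^{-1}) = 0$, so $\xi = 0$. Thus $\wt{a}$ is positive definite on $\stackrel{_\circ}{C}$.

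No step is a genuine obstacle. The only point needing care is positivity of the $q_\ell$ in the interior, which is imported from (\ref{2.1}). On the boundary of $C$ positivity fails; for instance, on $u = v$ only $q_1, q_2$ survive by (\ref{2.4})--(\ref{2.6}), and $\wt{a}$ degenerates there. This is why (\ref{3.3}) is stated only on $\stackrel{_\circ}{C}$.
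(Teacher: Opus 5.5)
Your proof is correct. For (\ref{3.3}) you take a different route from the paper.

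The paper computes $\mathrm{tr}\,\wt{a}$ and $\det\wt{a}$. It symmetrizes the determinant into $8a^2\sum_{\ell,m}(\lambda_\ell/\lambda_m+\lambda_m/\lambda_\ell-2)(1+\delta_\ell)q_\ell(1+\delta_m)q_m$ and bounds this below using pairs of modes with distinct eigenvalues.

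You instead write $\wt{a}=\sum_\ell w_\ell\, v_\ell v_\ell^{T}$, with $w_\ell=4a(1+\delta_\ell)q_\ell\lambda_\ell$ and $v_\ell=(1,\lambda_\ell^{-1})$. You then conclude from the fact that $v_1$ and $v_3$ span $\IR^2$.

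The two arguments are closely linked. By the Cauchy--Binet identity, the determinant of such a sum equals $\sum_{\ell<m}w_\ell w_m\det[v_\ell,v_m]^2$. Since $w_\ell w_m\det[v_\ell,v_m]^2=16a^2(1+\delta_\ell)q_\ell(1+\delta_m)q_m(\lambda_\ell/\lambda_m+\lambda_m/\lambda_\ell-2)$, this is exactly the paper's symmetrized expression.

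What your version buys:
\begin{itemize}
\item It is shorter.
\item It gives positive semi-definiteness on all of $C$ for free, since $q_\ell\ge 0$ there.
\item It exhibits the degenerate direction on the boundary.
\item It shows the structural reason: $\wt{a}$ is obtained by replacing $x_\ell^2$ with $q_\ell$ in the Gram matrix of $\nabla|x|^2,\nabla|x|^2_{-1}$ for the diffusion matrix of $\wt{L}$.
\end{itemize}

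What the paper's version buys is an explicit lower bound on $\det\wt{a}$ in terms of $\inf_{i\neq j}(\mu_i/\mu_j+\mu_j/\mu_i-2)$ and the products $q_{2i}q_{2j}$. This is a direct quantitative handle on ellipticity, as used for (\ref{3.10}). Your decomposition gives a comparable bound if you keep only the terms $\ell=1,3$.

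Your remark on (\ref{3.2}) is also right. Because of the constant terms $a\sum_\ell\lambda_\ell(1+\delta_\ell)$ and $a\sum_\ell(1+\delta_\ell)$, $\wt{b}$ is not literally homogeneous of degree $1$; only its $q$-dependent part is. The paper's statement has to be read that way.
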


\begin{proof}
The claim (\ref{3.2}) is a direct consequence of (\ref{2.8}) and (\ref{2.1}). To prove (\ref{3.3}), we compute the trace and the determinant of $\wt{a}$. We have
\begin{equation}\label{3.4}
\mbox{$tr \, \wt{a} = 4a \, \Sigma_\ell \, \Big(\lambda_\ell + \mbox{\f $\dis\frac{1}{\lambda_\ell}$}\Big) (1 + \delta_\ell) \, q_\ell > 0$ on $\stackrel{_\circ}{C}$, by (\ref{2.1}) and (\ref{1.7})}.
\end{equation}
Moreover,
\begin{equation}\label{3.5}
\begin{array}{lcl}
&\hspace{-5ex} {\rm det} \,\wt{a}  = &\hspace{-7ex} 16 a^2  \dis\sum_{1 \le  \ell, m \le N} \, \Big(\mbox{\f $\dis\frac{\lambda_\ell}{\lambda_m}$} - 1\Big) (1 + \delta_\ell) \,q_\ell \,(1 + \delta_m) \, q_m
\\[3ex]
& \stackrel{\mbox{\small symmetrizing}}{=} & 8a^2 \dis\sum_{1 \le  \ell, m \le N} \,\Big(\mbox{\f $\dis\frac{\lambda_\ell}{\lambda_m}$} + \mbox{\f $\dis\frac{\lambda_m}{\lambda_\ell}$} - 2 \Big)  (1 + \delta_\ell) \, q_\ell\,(1 + \delta_m) \, q_m
\\[3ex]
& \stackrel{\mbox{\small (\ref{1.2}),(\ref{2.2})}}{\ge} &\hspace{-3ex}  32 a^2 \,\inf\limits_{1 \le i \not= j \le n} \,\Big(\mbox{\f $\dis\frac{\mu_i}{\mu_j}$} + \mbox{\f $\dis\frac{\mu_j}{\mu_i}$} - 2\Big) \, \dis\sum_{1 \le i \not= j \le n} \,(1 + \delta_{2i}) \,q_{2i} \,(1 + \delta_{2j}) \, q_{2j}
\\[3ex]
& > & \hspace{-6ex} \mbox{$0$ on $\stackrel{_\circ}{C}$ by (\ref{2.1}) and (\ref{1.2})}.
\end{array}
\end{equation}
Hence, on $\stackrel{_\circ}{C}$ the eigenvalues of $\wt{a}$ are positive and (\ref{3.3}) follows. This proves Lemma \ref{lem3.1}.
\end{proof}

We can thus introduce the diffusion operator on $\stackrel{_\circ}{C}$ attached to $\wt{a}$ and $\wt{b}$; it acts on smooth functions $\psi$ compactly supported in $\stackrel{_\circ}{C}$ via
\begin{equation}\label{3.6}
\begin{split}
\wt{A} \psi = & \; 2a \Big\{\Sigma_\ell \, \lambda_\ell \,(1 + \delta_\ell) \,q_\ell \, \partial^2_u \, \psi + 2 \Sigma_\ell (1 + \delta_\ell) \,q_\ell \,\partial^2_{u,v} \psi + \Sigma_\ell \, \mbox{\f $\dis\frac{1}{\lambda_\ell}$} \; (1 + \delta_\ell) \,q_\ell \ \partial^2_v \psi\Big\}
\\[1ex]
& +  \big(a \Sigma_\ell \, \lambda_\ell \,(1 + \delta_\ell)  - 2 \Sigma_\ell \,\lambda_\ell \, q_\ell\big) \, \partial_u \psi + \big( a \Sigma_\ell (1 + \delta_\ell) - 2 \Sigma_\ell \,q_\ell \big) \,\partial_v \psi
\end{split}
\end{equation}

\n
(by Lemma \ref{lem3.1} this is an elliptic second order operator with Lipschitz coefficients in $\stackrel{_\circ}{C}$, which are homogeneous of degree $1$). As an aside, there is no gain of generality in allowing that $\delta_{2i}$ differs from $\delta_{2i-1}$ in (\ref{1.7}). Indeed, due to the fact that $q_{2i} = q_{2i-1}$ and $\lambda_{2i} = \lambda_{2i-1}$, see (\ref{2.2}), (\ref{1.2}), $\wt{A}$ remains identical to the operator that one obtains with $\delta_{2i}$ and $\delta_{2i-1}$ replaced by the half of their sum.

\medskip
We then consider the space $C(\IR_+, \stackrel{_\circ}{C})$ of continuous $\stackrel{_\circ}{C}$-valued functions on $\IR_+$, endowed with its canonical $\sigma$-algebra $\cF$, its canonical filtration $(\cF_t)_{t \ge 0}$, and its canonical process $(W_t)_{t \ge 0}$. Our main object is the well-posedness of the martingale problem for $\wt{A}$ in $\stackrel{_\circ}{C}$.
\begin{theorem}\label{theo3.2}
Given $w \in\, \stackrel{_\circ}{C}$, there is a unique probability $\wt{P}_w$ on $C(\IR_+, \stackrel{_\circ}{C}$) such that
\begin{equation}\label{3.7}
\begin{array}{l}
\mbox{$\wt{P}_w (W_0 = w) = 1$, and for any smooth compactly supported function $\psi$ on $\stackrel{_\circ}{C}$},
\\[1ex]
\mbox{$\psi(W_t) - \dis\int^t_0 \wt{A} \, \psi (W_s)\, ds, t \ge 0$, is an $(\cF_t)_{t \ge 0}$-martingale}.
\end{array}
\end{equation}
\end{theorem}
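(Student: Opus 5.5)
The plan is the standard localization scheme: solve the martingale problem on an exhausting sequence of compact subsets of $\stackrel{_\circ}{C}$, then show that the solution never reaches $\partial C$, the origin, or infinity in finite time. For the first part I fix compacts $K_m \uparrow \stackrel{_\circ}{C}$ and extend $\wt{a}$, $\wt{b}$ from $K_m$ to bounded, uniformly elliptic, continuous coefficients on $\IR^2$; this is possible by Lemma \ref{lem3.1}. For each such extension the martingale problem is well-posed by the Stroock--Varadhan theory. Since the coefficients are Lipschitz, one could instead use strong existence and uniqueness for the SDE with Lipschitz coefficients $\wt{a}^{1/2}$ and $\wt{b}$; here $\wt{a}^{1/2}$ is locally Lipschitz because $\wt{a}$ is Lipschitz and uniformly positive definite on $K_m$. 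Uniqueness of the stopped problems up to the exit time $T_m$ of $K_m$ (Stroock--Varadhan, Theorem 10.1.1 type localization) makes these laws consistent. They therefore patch together into a unique law on paths defined up to $T_\infty = \lim_m T_m$. It then remains to prove $T_\infty = \infty$ a.s., so that the law lives on $C(\IR_+, \stackrel{_\circ}{C})$; uniqueness in (\ref{3.7}) follows, since any solution must agree with the patched law on every $\cF_{T_m}$.

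For non-explosion I will build a Lyapunov--Foster function $f \ge 0$ on $\stackrel{_\circ}{C}$ with $f \to \infty$ at $\partial C \cup \{0\} \cup \{\infty\}$ and $\wt{A} f \le c_1 f + c_2$. This is presumably the content of Proposition \ref{prop3.3}. Then $e^{-c_1 t}(f(W_t)+c_2/c_1)$ stopped at $T_m$ is a supermartingale, and $\wt{P}_w(T_m \le t) \le e^{c_1 t}(f(w)+c_2/c_1)/\inf_{\partial K_m} f \to 0$. I plan to take
\begin{equation*}
f(u,v) = u - \log(u-v) - \log(\lambda_N v - u) - \log u + \mathrm{const},
\end{equation*}
and check each term separately.

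The term $u$ gives $\wt{A}u = \wt{b}_u = B_1 - 2\sum_\ell \lambda_\ell q_\ell \le B_1 - 2u$, by (\ref{2.3}), which is harmless at infinity. For $-\log(u-v)$, put $Y = u-v$. Its drift is $a\sum_\ell(\lambda_\ell-1)(1+\delta_\ell) - 2\sum_\ell(\lambda_\ell -1)q_\ell$ and its diffusion coefficient is $4a\sum_\ell \lambda_\ell(1-\lambda_\ell^{-1})^2(1+\delta_\ell) q_\ell$. In the sector $\ov{\Delta}_2$, the explicit formula (\ref{2.10}) gives the following, with $\sigma = \sum_{\ell\ge3} (\lambda_\ell-1)(1+\delta_\ell)$:
\begin{align*}
\text{drift of } Y &\ge a\sigma - cY, \\
\text{diffusion coefficient of } Y &= \frac{4aY}{N-2}\,\sigma.
\end{align*}
So $Y$ behaves near $u=v$ like a squared Bessel process of dimension about $N-2 \ge 6 > 2$. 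Consequently $\wt{A}(-\log Y) = -\text{drift}/Y + \tfrac12\,\text{diff}/Y^2 \le -a\sigma(1 - \tfrac{2}{N-2})/Y + c \le c$ there. A similar computation at $u = \lambda_N v$ handles $-\log(\lambda_N v - u)$; here I use the formula of the $q_\ell$ in the last sector $\Delta_n$ from Appendix B together with the boundary values (\ref{2.4})--(\ref{2.6}). Near the origin, $-\log u$ is controlled by the fact that $\wt{A}u \to B_1>0$ while the diffusion coefficient of $u$ is $O(u)$; this is again a Bessel-type comparison, using $\sum_\ell \lambda_\ell(1+\delta_\ell)q_\ell \le \lambda_N u$. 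Away from the boundary rays, homogeneity of degree $1$ of the coefficients keeps all terms bounded by $c(1+u)$.

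The main obstacle will be the edge $u=\lambda_N v$. There I need an effective dimension strictly greater than $2$, which requires sharp linear asymptotics of the $q_\ell$ in $\Delta_n$ near that ray. Those asymptotics come from Appendix B and are less explicit than (\ref{2.10}). If the dimension count there is only borderline, my first fallback is to replace $-\log(\lambda_N v-u)$ by $(\lambda_N v - u)^{-\epsilon}$, or by $-\log$ with a $u$-dependent prefactor. The two corners where the boundary rays meet the origin, and the matching between sectors, are handled by homogeneity: I write $f$ as a function of the ratio $u/v$ plus a function of $u$, which reduces everything to one-dimensional checks in $u/v \in (1,\lambda_N)$.
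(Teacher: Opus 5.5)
Your architecture matches the paper's. You localize on compacts where $\wt{a}$ is uniformly elliptic, use uniqueness up to exit times, and reduce to non-explosion via a function whose sub-level sets are compact in $\stackrel{_\circ}{C}$. A logarithmic $f$ with $\wt{A} f \le c_1 f + c_2$ would indeed suffice for Theorem \ref{theo3.2}. The gap is in the one step that carries the content, the Lyapunov estimate.

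\begin{itemize}
\item You verify it only near $u=v$ inside $\ov{\Delta}_2$, via (\ref{2.10}).
\item At the ray $u=\lambda_N v$ you give no argument. The ``sharp linear asymptotics of the $q_\ell$ in $\Delta_n$'' you plan to import are not in Appendix B. It only provides the boundary values (\ref{B.20}), the values (\ref{B.25}) on the rays $u=\mu_i v$, and the formulas (\ref{B.27}) on $\ov{\Delta}_2$.
\item Near the origin, homogeneity does not help. $\wt{A}(-\log(u-v))$, $\wt{A}(-\log(\lambda_N v-u))$ and $\wt{A}(-\log u)$ each contain terms homogeneous of degree $-1$. These are not $O(1+u)$, and their coefficient must be non-positive in every direction $u/v \in (1,\lambda_N)$, not only in $\Delta_2$.
\item Your bound $\sum_\ell \lambda_\ell(1+\delta_\ell) q_\ell \le \lambda_N u$ for the $-\log u$ term gives effective dimension $B_1/(a\lambda_N)$. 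This can be below $2$ (take $\delta_{N-1}=\delta_N$ close to $-1$), so that Bessel comparison fails as stated.
\end{itemize}

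The missing idea is that no asymptotics of the $q_\ell$ are needed, only (\ref{2.3}) and the pairing in (\ref{1.2}), (\ref{1.7}).
\begin{itemize}
\item Write $g=\lambda_N v-u=\sum_\ell (\lambda_N/\lambda_\ell-1)q_\ell$. Its diffusion coefficient is $4a\sum_\ell(\lambda_N-\lambda_\ell)(1+\delta_\ell)(\lambda_N/\lambda_\ell-1)q_\ell \le 4aMg$, with $M=\max_\ell(\lambda_N-\lambda_\ell)(1+\delta_\ell)$.
\item The constant part of its drift is $a\sum_\ell(\lambda_N-\lambda_\ell)(1+\delta_\ell) > 2aM$. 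This holds because each value occurs twice and $n\ge 4$ leaves further positive terms.
\item Using also $\sum_\ell(\lambda_N-\lambda_\ell)q_\ell\le\lambda_N g$, you get $\wt{A}(-\log g)\le -a(\sum_\ell(\lambda_N-\lambda_\ell)(1+\delta_\ell)-2M)/g+2\lambda_N$ on all of $\stackrel{_\circ}{C}$. This handles the ray $u=\lambda_N v$ and the origin at once.
\item The same computation with $u-v=\sum_\ell(1-\lambda_\ell^{-1})q_\ell$ gives the analogous global bound for $-\log(u-v)$, with no need for (\ref{2.10}).
\item With $u=\sum_\ell q_\ell$ and $\max_\ell\lambda_\ell(1+\delta_\ell)$ in place of $\lambda_N$, it repairs the $-\log u$ term, since $B_1 > 2a\max_\ell\lambda_\ell(1+\delta_\ell)$ by the pairing. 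You may also simply drop that term, since $-\log(u-v)$ already blows up at the origin.
\end{itemize}

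This is exactly the mechanism behind (\ref{3.16})--(\ref{3.17}) and (\ref{3.23})--(\ref{3.27}) in Proposition \ref{prop3.3}. The paper uses $(u-v)^{-\alpha_0}$, $(\lambda_N v-u)^{-\beta_0}$ and $e^{v/2a}$ because it also wants the stronger Foster condition $\wt{A}\Phi\le -1$ off a compact set, for Theorem \ref{theo4.1}. For Theorem \ref{theo3.2} alone, your logarithms (the $\alpha_0,\beta_0\to 0$ limits of (\ref{3.16})--(\ref{3.17})) work once verified this way.
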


\begin{proof}
We approximate $\stackrel{_\circ}{C}$ from inside, and for
\begin{equation}\label{3.8}
0 < \eta < (\lambda_N - 1) \, / \, 2,
\end{equation}

\vspace{-1ex}\n
define the open subset of $\stackrel{_\circ}{C}$
\begin{equation}\label{3.9}
C_\eta = \{w = (u,v) \in \, \stackrel{_\circ}{C}; 1 + \eta < u / v < \lambda_N - \eta \;\mbox{and} \; v > \eta\}
\end{equation}
so that with Lemma \ref{lem3.1}

\vspace{-4ex}
\begin{equation}\label{3.10}
\mbox{$\wt{a}$ is uniformly elliptic on $C_\eta$}.
\end{equation}
We then consider $w \in\, \stackrel{_\circ}{C}$ as in (\ref{3.7}), and assume that $\eta > 0$ in (\ref{3.8}) is small enough so that $w \in C_\eta$. By Theorem 5.2.2, p.~131 of \cite{StroVara79}, $\wt{a}^{1/2}(\cdot)$ is a Lipschitz function on $\ov{C}\!_\eta$. This is also the case for $\wt{b}(\cdot)$. This fact ensures that the law we are looking for, is uniquely determined up to the exit time of $C_\eta$. Specifically, we can consider arbitrary Lipschitz extensions to $\IR^2$ of the restriction to $C_\eta$ of $\wt{a}^{1/2}$ and $\wt{b}$, denote by $A_\eta$ the corresponding diffusion operator, so that $A_\eta \psi$ coincides with $\wt{A} \psi$ on $\ov{C}\!_\eta$, when $\psi$ is supported in $\stackrel{_\circ}{C}$. The martingale problem attached to $A_\eta$ is well-posed, see for instance \cite{KaraShre88}, p.~319, and letting $T_\eta$ stand for the exit time of the canonical process (still denoted by $(W_t)_{t \ge 0})$ and $P^\eta_w$ for the corresponding law with starting point $w$ the law of $(W_{t \wedge T_\eta})_{t \ge 0}$ under $P^\eta_w$ does not depend on the choice of the extensions. By Corollary 10.1.2, p.~250 of \cite{StroVara79}, Theorem \ref{theo3.2} will be proved once we show that
\begin{equation}\label{3.11}
\mbox{for any fixed $T > 0$, $\lim\limits_{\eta \r 0} \, P^\eta_w [T_\eta \le T] = 0$}.
\end{equation}
To establish (\ref{3.11}), we will construct a smooth non-negative function $\Phi$ on $\stackrel{_\circ}{C}$, with sub-level sets $\{\Phi \le t\}$, which are for every $t \ge 0$, (possibly empty) compact subsets of $\stackrel{_\circ}{C}$, a non-empty compact subset $\Gamma$ of $\stackrel{_\circ}{C}$, and a positive real number $C_\Phi$, so that
\begin{align}
& \mbox{$\wt{A} \Phi \le -1$ on $\stackrel{_\circ}{C} \, \backslash \, \Gamma$, and} \label{3.12}
\\[1ex]
& \mbox{$\wt{A} \Phi \le C_\Phi$ on $\Gamma$} \label{3.13}
\end{align}
(where $\wt{A}$ in (\ref{3.6}) has been tacitly extended to smooth functions on $\stackrel{_\circ}{C}$).

\medskip
This is a so-called Lyapunov-Foster condition. By Theorem 2.1 on p.~524 of \cite{MeynTwee93} it is (much) more than what is needed to ensure that (\ref{3.11}) holds and complete the proof of Theorem \ref{theo3.2}. But this Lyapunov-Foster condition will be helpful in the next section to establish that the diffusion on $\stackrel{_\circ}{C}$ constructed in this section has a unique stationary distribution.

\begin{proposition}\label{prop3.3}
Define the function $\Phi$ on $\stackrel{_\circ}{C}$ via
\begin{equation}\label{3.14}
\Phi = \Phi_F + \Phi_G + \Phi_V,
\end{equation}
where for $w = (u,v) \in\, \stackrel{_\circ}{C}$
\begin{equation}\label{3.15}
\Phi_F(w) = (u-v)^{-\alpha_0}, \Phi_G(w) = (\lambda_N v-u)^{-\beta_0}, \Phi_V(w) = e^{\gamma_0 v},
\end{equation}
and $\alpha_0, \beta_0, \gamma_0 > 0$ are chosen so that
\begin{align}
& 2(2 \alpha_0 + 1) \, \max\limits_\ell (\lambda_\ell - 1) (1 + \delta_\ell) < \Sigma_\ell (\lambda_\ell - 1) (1 + \delta_\ell), \label{3.16}
\\[1ex]
& 2(2 \beta_0 + 1) \, \max\limits_\ell (\lambda_N - \lambda_\ell) (1 + \delta_\ell) < \Sigma_\ell (\lambda_N  - \lambda_\ell) (1 + \delta_\ell), \label{3.17}
\end{align}
(such choices are possible because $2 \max_\ell \,(\lambda_\ell - 1)(1 + \delta_\ell) < \Sigma_\ell (\lambda_\ell -1)(1 + \delta_\ell)$ and $2 \max_\ell \,(\lambda_N - \lambda_\ell) (1 + \delta_\ell) < \Sigma_\ell (\lambda_N - \lambda_\ell) (1 + \delta_\ell)$, by {\rm (\ref{1.1}), (\ref{1.2})}, and {\rm (\ref{1.7})}), 
\begin{equation}\label{3.18}
\gamma_0 = \mbox{\f $\dis\frac{1}{2a}$}\,.
\end{equation}

\n
Then, one has $c_F, c_G, c_V > 0$, so that
\begin{equation}\label{3.19}
\mbox{$\wt{A} \Phi_F \le c_F, \wt{A} \Phi_G \le c_G$, and $\wt{A} \Phi_V \le c_V$ on $\stackrel{_\circ}{C}$},
\end{equation}
and one can choose $f_0, g_0, v_0$ positive such that setting
\begin{align}
&\Gamma = \{w = (u,v) \in \, \stackrel{_\circ}{C}; u-v \ge f_0, \lambda_N v - u \ge g_0, v \le v_0\}, \; \mbox{and} \;\, c_\Phi = c_F + c_G + c_V, \label{3.20}
\\[2ex]
&\mbox{$\Phi, \Gamma, c_\Phi$ satisfy the Lyapunov-Foster condition corresponding to {\rm (\ref{3.12}), (\ref{3.13})}} \label{3.21}
\\
&\mbox{and the assumptions above {\rm (\ref{3.12}), (\ref{3.13})}}. \nonumber
\end{align}
\end{proposition}

\begin{proof}
We set $f(w) = u-v \, (> 0)$ for $w = (u,v) \in \,\stackrel{_\circ}{C}$, and find by (\ref{3.6}) that for $\alpha > 0$,
\begin{equation}\label{3.22}
\begin{split}
\wt{A} f^{-\alpha} = &  - \mbox{\f $\dis\frac{\alpha}{f^{\alpha + 1}}$} \,(a \Sigma_\ell \,(\lambda_\ell - 1) (1 + \delta_\ell) - 2 \Sigma_\ell (\lambda_\ell - 1) \,q_\ell)
\\[1ex]
& +  \mbox{\f $\dis\frac{\alpha (\alpha + 1)}{f^{\alpha + 2}}$}  \; 2a \, \Sigma_\ell \,\Big(\lambda_\ell + \mbox{\f $\dis\frac{1}{\lambda_\ell}$} - 2\Big) \,(1+ \delta_\ell) \, q_\ell.
\end{split}
\end{equation}
Noting that $\lambda_\ell + \frac{1}{\lambda_\ell} - 2 = (\lambda_\ell - 1)(1 - \frac{1}{\lambda_\ell})$ and $f = \Sigma_\ell (1 - \frac{1}{\lambda_\ell}) \,q_\ell$ by (\ref{2.3}), we find that
\begin{equation}\label{3.23}
\begin{split}
\wt{A} \, f^{-\alpha}  = &- \mbox{\f $\dis\frac{\alpha}{f^{\alpha + 1}}$} \,\Big\{a \Sigma_\ell \,(\lambda_\ell - 1)(1 + \delta_\ell) 
\\
&- 2a (\alpha + 1) \, \Sigma_\ell \,(\lambda_\ell - 1) (1 + \delta_\ell) \Big(1 - \mbox{\f $\dis\frac{1}{\lambda_\ell}$}\Big) \, q_\ell \, / \, \Sigma_{\ell^\prime}\, \Big(1 - \mbox{\f $\dis\frac{1}{\lambda_{\ell^\prime}}$}\Big) \, q_{\ell^\prime} - 2 \Sigma_\ell \,(\lambda_\ell - 1) \,q_\ell\Big\}.
\end{split}
\end{equation}
If we now choose $\alpha = \alpha_0$, with $\alpha_0$ as in (\ref{3.16}), we note that
\begin{equation*}
\begin{array}{l}
2 (\alpha_0 + 1) \, \Sigma_\ell \,(\lambda_\ell -1) (1 + \delta_\ell) \, \Big(1 - \mbox{\f $\dis\frac{1}{\lambda_\ell}$}\Big) \, q_\ell  \, / \, \Sigma_{\ell^\prime} \Big(1 - \mbox{\f $\dis\frac{1}{\lambda_{\ell^\prime}}$}\Big) \, q_{\ell^\prime} \le 2 (\alpha_0 + 1) \, \max\limits_\ell \,\{(\lambda_\ell - 1)(1 + \delta_\ell)\}
\\
\le \mbox{\f $\dis\frac{\alpha_0 +1}{2 \alpha_0 + 1}$} \; \Sigma_\ell\, (\lambda_\ell - 1) (1 + \delta_\ell),
\end{array}
\end{equation*}
so that coming back to (\ref{3.23}), and using $\Sigma_\ell \,(\lambda_\ell - 1) \,q_\ell = \Sigma_\ell \,\lambda_\ell \, (1 - \frac{1}{\lambda_\ell}) \, q_\ell \le \lambda_N f$, we find that on $\stackrel{_\circ}{C}$
\begin{equation}\label{3.24}
\wt{A} f^{-\alpha_0} \le - \mbox{\f $\dis\frac{\alpha_0}{f^{\alpha_0 + 1}}$} \; \Big\{ \mbox{\f $\dis\frac{a \alpha_0}{2 \alpha_0 + 1}$} \; \Sigma_\ell (\lambda_\ell - 1)(1 + \delta_\ell) - 2 \lambda_N f\Big\}.
\end{equation}
Further, note that the right member of (\ref{3.24}) is uniformly bounded from above, so that the first inequality of (\ref{3.19}) holds (see (\ref{3.15}) for notation).

\medskip
We then set $g(w) = \lambda_N v - u \,(> 0)$ for $w \in \, \stackrel{_\circ}{C}$, and find by (\ref{3.6}) that for $\beta > 0$,
\begin{equation}\label{3.25}
\begin{split}
\wt{A} g^{-\beta} = & - \mbox{\f $\dis\frac{\beta}{g^{\beta + 1}}$} \; \big(a \Sigma_\ell (\lambda_N - \lambda_\ell)(1 + \delta_\ell) - 2 \Sigma_\ell (\lambda_N - \lambda_\ell) \,q_\ell \big)
\\[1ex]
& +  \mbox{\f $\dis\frac{\beta (\beta + 1)}{g^{\beta + 2}}$} \; 2 a \, \lambda_N \, \Sigma_\ell \, \Big(\mbox{\f $\dis\frac{\lambda_N}{\lambda_\ell}$} +  \mbox{\f $\dis\frac{\lambda_\ell}{\lambda_N}$} -2 \Big) (1 + \delta_\ell) \, q_\ell .
\end{split}
\end{equation}
With a similar observation as above (\ref{3.23}) we obtain
\begin{equation}\label{3.26}
\begin{split}
\wt{A} g^{-\beta} = & - \mbox{\f $\dis\frac{\beta}{g^{\beta + 1}}$} \; \big(a \Sigma_\ell (\lambda_N - \lambda_\ell)(1 + \delta_\ell)
\\
& - 2 a (\beta + 1)\, \Sigma_\ell (\lambda_N - \lambda_\ell) (1 + \delta_\ell) \,\Big(\mbox{\f $\dis\frac{\lambda_N}{\lambda_\ell}$} - 1\Big)\,q_\ell  \, / \, \Sigma_{\ell^\prime} \,  \Big(\mbox{ \f $\dis\frac{\lambda_N}{\lambda_{\ell^\prime}}$} - 1\Big)\, q_{\ell^\prime} 
\\
& - 2 \Sigma_\ell \,(\lambda_N - \lambda_\ell) \, q_\ell\Big\}.
\end{split}
\end{equation}
If we now choose $\beta = \beta_0$, with $\beta_0$ as in (\ref{3.17}), we find that 
\begin{equation*}
\begin{array}{l}
2 (\beta_0 + 1) \, \Sigma_\ell (\lambda_N - \lambda_\ell) (1 + \delta_\ell)\, \Big( \mbox{\f $\dis\frac{\lambda_N}{\lambda_\ell}$}  - 1\Big) \, q_\ell \, / \, \Sigma_{\ell^\prime}\,   \Big( \mbox{\f $\dis\frac{\lambda_N}{\lambda_{\ell^\prime}}$}  - 1\Big)\, q_{\ell^\prime} \le 2 (\beta_0 + 1) \, \max \{(\lambda_N - \lambda_\ell) (1 + \delta_\ell)\}
\\
\le \mbox{\f $\dis\frac{\beta_0 + 1}{2 \beta_0 + 1}$} \; \Sigma_\ell (\lambda_N - \lambda_\ell) (1 + \delta_\ell).
\end{array}
\end{equation*}
Thus, coming back to (\ref{3.26}), using that $\Sigma_\ell \,(\lambda_N - \lambda_\ell) \, q_\ell = \Sigma_\ell \, \lambda_\ell \, (\frac{\lambda_N}{\lambda_\ell}  - 1)\, q_\ell \le \lambda_N g$, we obtain
\begin{equation}\label{3.27}
\wt{A} g^{-\beta_0} \le - \mbox{\f $\dis\frac{\beta_0}{g^{\beta + 1}}$} \;\Big\{ \mbox{\f $\dis\frac{a \beta_0}{2\beta _0 + 1}$} \; \Sigma_\ell (\lambda_N - \lambda_\ell) (1 + \delta_\ell) - 2 \lambda_N g\Big\}.
\end{equation}
We also note that the right member of (\ref{3.27}) is uniformly bounded from above, so that the second inequality of (\ref{3.19}) holds (see (\ref{3.15}) for notation).

\medskip
Finally, we have for $\gamma \in (0,(2a)^{-1}]$ and $h_\gamma(w)  = e^{\gamma v}$ with $w = (u,v) \in \, \stackrel{_\circ}{C}$,
\begin{equation}\label{3.28}
\begin{split}
\wt{A} h_\gamma(w)  = & \Big\{ 2 a \, \gamma^2 \, \Sigma_\ell \; \mbox{\f $\dis\frac{1}{\lambda_\ell}$} (1 + \delta_\ell)\, q_\ell + \gamma\,\big(a \Sigma_\ell ( 1+ \delta_\ell) - 2u\big)\Big\} \, e^{\gamma v} 
\\
&\hspace{-4.9ex} \stackrel{(\ref{1.7}),(\ref{2.3})}{\le} (2 a\, \gamma v - 2 u + a N) \, \gamma e^{\gamma v} \le ( - u + a N) \, \gamma e^{\gamma v} (\le a N \, \gamma e^{\gamma a N}).
\end{split}
\end{equation}
Thus, we choose
\begin{align}
&\gamma_0 = (2a)^{-1} \; \mbox{and} \; c_V = \mbox{\f $\dis\frac{N}{2}$} \;e^{\frac{N}{2}} , \mbox{so that} \label{3.29}
\\[1ex]
& \wt{A} h_{\gamma_0} \le (-u + a N) \, / \, 2 a \;\, e^{v / 2a} \le c_V, \label{3.30}
\end{align}
and in particular the last inequality of (\ref{3.19}) holds (see (\ref{3.15})).

\medskip
Then, combining (\ref{3.19}) with (\ref{3.24}), (\ref{3.27}), (\ref{3.30}), we see that we can choose $f_0 > 0$, $g_0 > 0$ (small enough) and $v_0 > 0$ (large enough) so that on $\{f \le f_0\} \cup \{g \le g_0\} \cup \{w \in \,\stackrel{_\circ}{C}$; $v \ge v_0\}$, $\wt{A} ( \Phi_F + \Phi_G + \Phi_V) \le -1$, i.e.~$\wt{A} \Phi \le -1$. Since $\Phi$ is smooth, non-negative, and its sub-level sets are compact subsets of $\stackrel{_\circ}{C}$, this completes the proof of (\ref{3.21}) and hence of Proposition \ref{prop3.3}.
\end{proof}

As noted below (\ref{3.13}), Proposition \ref{prop3.3} completes the proof of (\ref{3.11}), and hence of Theorem \ref{theo3.2}.
\end{proof}

We finally record a consequence of the proof of Proposition \ref{prop3.3}, which will be useful in the proof of (\ref{4.2}) in the next section.

\begin{corollary}\label{cor3.4}
With $\alpha_0, \beta_0, \gamma_0 > 0$ and $\Phi$ as in Proposition \ref{prop3.3}, 
\begin{equation}\label{3.31}
\begin{array}{l}
\mbox{the function $|\wt{A} \Phi(w)| \, / \, \{(u-v)^{-\alpha_0 - 1} + (\lambda_N \, v-u)^{- \beta_0 - 1} + e^{\gamma_0 v}\}$ is}
\\
\mbox{bounded away from $0$ outside some compact subset of $\stackrel{_\circ}{C}$}.
\end{array}
\end{equation}
\end{corollary}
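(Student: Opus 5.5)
We need to show: outside a compact subset of $\stackrel{_\circ}{C}$, $|\wt{A}\Phi|$ is bounded below by $c\{f^{-\alpha_0-1} + g^{-\beta_0-1} + e^{\gamma_0 v}\}$ with $c>0$, where $f = u-v$ and $g = \lambda_N v - u$. The natural route is to reuse the three one-sided estimates (\ref{3.24}), (\ref{3.27}), (\ref{3.30}) from the proof of Proposition \ref{prop3.3}, which already give negative upper bounds of exactly the right shape. Write $K_F = \frac{a\alpha_0}{2\alpha_0+1}\Sigma_\ell(\lambda_\ell-1)(1+\delta_\ell)>0$ and $K_G = \frac{a\beta_0}{2\beta_0+1}\Sigma_\ell(\lambda_N-\lambda_\ell)(1+\delta_\ell)>0$. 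Then
\begin{equation*}
\wt{A}\Phi_F \le -\alpha_0 f^{-\alpha_0-1}(K_F - 2\lambda_N f), \quad \wt{A}\Phi_G \le -\beta_0 g^{-\beta_0-1}(K_G - 2\lambda_N g), \quad \wt{A}\Phi_V \le \tfrac{1}{2a}(aN-u)e^{\gamma_0 v}.
\end{equation*}
Since $\wt{A}\Phi$ will turn out to be negative outside a compact set, it suffices to show that $-\wt{A}\Phi \ge c\,\{\cdots\}$ there.

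The key step is a case split, using $u \ge v$ and the fact that $f,g \le \lambda_N v$ are controlled by $v$. Each of the three bounds is of the form ``bad positive part is bounded, good negative part is large''. Specifically, $\wt{A}\Phi_F \le -\frac{\alpha_0K_F}{2}f^{-\alpha_0-1}$ when $f\le K_F/(4\lambda_N)$. In general $\wt{A}\Phi_F \le 2\lambda_N\alpha_0 f^{-\alpha_0} \le c$ for $f$ bounded below, and for $f \ge K_F/(4\lambda_N)$ the term $f^{-\alpha_0-1}$ is itself bounded. The same holds for $\Phi_G$. For $\Phi_V$, since $u\ge v$, we get $\wt{A}\Phi_V \le -\frac{v}{4a}e^{\gamma_0 v}$ once $v \ge 2aN$, and $\wt{A}\Phi_V \le c_V$ always.

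Combining the cases: fix $f_1,g_1$ small and $v_1$ large. Outside the compact set $\{f\ge f_1, g\ge g_1, v\le v_1\}$ at least one of $f<f_1$, $g<g_1$, $v>v_1$ holds. We then check that the sum of the three terms is at most $-c$ times the sum of the three weights. The bounded ``bad'' contributions (at most $c_F+c_G+c_V$) must be absorbed by whichever good term is large. The weights attached to the non-degenerate directions are bounded by constants, except $e^{\gamma_0 v}$, which is only large when $v$ is large, and then its own good term dominates it since $\frac{v}{4a}e^{\gamma_0v} \ge e^{\gamma_0 v}$ for $v\ge 4a$.

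The main obstacle is the mixed regime, for example $f$ small together with $v$ large. There $\wt{A}\Phi_F$ is very negative, but we also need to control $\wt{A}\Phi_G$, and the bound $2\lambda_N\beta_0 g^{-\beta_0}$ is bounded only if $g$ is not small. If $g$ is small, its own good term applies instead. Note that $f$ and $g$ cannot both be small when $v$ is large, because $f+g=(\lambda_N-1)v$. When $v$ is bounded and both $f$ and $g$ are small, both good terms apply simultaneously. With the thresholds chosen so that each bad term is at most a fixed constant, every case yields $-\wt{A}\Phi \ge \frac{1}{2}\min(\alpha_0K_F/2,\beta_0K_G/2,1)\{\cdots\}$ after absorbing the constants, provided $f_1,g_1$ are small and $v_1$ is large enough that the good terms exceed twice the sum of the bounded terms. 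This gives (\ref{3.31}).
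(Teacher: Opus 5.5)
Your proposal is correct and is essentially the paper's proof. Both arguments start from the one-sided bounds (\ref{3.24}), (\ref{3.27}), (\ref{3.30}). On each of $\{u-v\le f_1\}$, $\{\lambda_N v-u\le g_1\}$, $\{v\ge v_1\}$ the bounded positive contributions are absorbed by the corresponding large negative term. The conclusion follows because $\{u-v\ge f_1,\ \lambda_N v-u\ge g_1,\ v\le v_1\}$ is compact in $\stackrel{_\circ}{C}$.

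You treat the mixed regimes more explicitly than the paper does. You fix the thresholds $K_F/(4\lambda_N)$, $K_G/(4\lambda_N)$, $\max(2aN,4a)$, where the good terms take over, before choosing $f_1,g_1,v_1$. The bounded weights are then controlled by fixed constants, which justifies your uniform constant $\frac12\min(\alpha_0K_F/2,\beta_0K_G/2,1)$.
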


\begin{proof}
Choosing $0 < f_1 \le f_0$, $0 < g_1 \le g_0$, small enough and $v_1 \ge v_0$ large enough, we can ensure that the sum of the right members of (\ref{3.24}), (\ref{3.27}), and of the first inequality of (\ref{3.30}) is smaller than $- \rho_F (u-v)^{-(\alpha_0 + 1)}$ on $\{f \le f_1\}$, $- \rho_G(\lambda_N \, v-u)^{-(\beta_0 + 1)}$ on $\{ g \le g_1\}$ and $-\rho_V \, e^{\gamma_0 v}$ on $\{v \ge v_1\}$, with $\rho_F, \rho_G, \rho_V$ positive. It then follows that on $\{f \le f_1\} \cup \{ g \le g_1\} \cup \{v \ge v_1\}$ the ratio in (\ref{3.31}) remains bounded away from zero. In addition, $\{f \ge f_1\} \cap \{ g \ge g_1\} \cap \{ v \le v_1\}$ is a compact subset of  $\stackrel{_\circ}{C}$. This proves Corollary \ref{cor3.4}.
\end{proof}

\begin{remark}\label{rem3.5} \rm Given $w \in \Delta_2$, see (\ref{2.1}), under $\wt{P}_w$, the process $F_t = U_t - V_t$, $t \ge 0$, is by Theorem \ref{theo3.2} and stochastic calculus a continuous $(\cF_t)_{t \ge 0}$-semi-martingale with drift term equal to
\begin{equation}\label{3.32}
\dis\int^t_0 a \Sigma_\ell (\lambda_\ell -1)(1 + \delta_\ell) - 2 \Sigma_\ell \,(\lambda_\ell - 1) \, q_\ell (W_s) \, ds, t \ge 0,
\end{equation}
and bracket process equal to
\begin{equation}\label{3.33}
\begin{split}
\langle F\rangle_t =  &\,4 a \dis\int^t_0 \Sigma_\ell \, \Big(\lambda_\ell + \mbox{\f $\dis\frac{1}{\lambda_\ell}$} - 2 \Big) ( 1 + \delta_\ell) \, q_\ell (W_s) \, ds 
\\
= & \,4 a \dis\int^t_0  \Sigma_\ell \, (\lambda_\ell - 1) \Big(1 - \mbox{\f $\dis\frac{1}{\lambda_\ell}$}\Big) ( 1 + \delta_\ell) \, q_\ell \,(W_s) \, ds, t \ge 0.
\end{split}
\end{equation}

\n
If $\tau = \inf\{ t \ge 0; W_s \notin \Delta_2\}$ denote the exit time of $W_t, t \ge 0$, from the sector $\Delta_2$, see (\ref{2.1}), and we use the explicit formulas (\ref{2.10}) for the functions $q_\ell$ in $\ov{\Delta}_2$, we find that under $\wt{P}_w$, the stopped process $F_{t \wedge \tau}, t \ge 0$, is a continuous $(\cF_t)_{t \ge 0}$-semi-martingale with drift term equal to
\begin{equation}\label{3.34}
\dis\int_0^{t \wedge \tau} a \Sigma_\ell \,(\lambda_\ell - 1)(1 + \delta_\ell) - \mbox{\f $\dis\frac{2}{N-2}$} \;(\Sigma_{\ell \ge 3} \,\lambda_\ell) \, F_s \, ds, t \ge 0,
\end{equation}
and bracket process equal to
\begin{equation}\label{3.35}
\langle F \rangle_{t \wedge \tau} = \mbox{\f $\dis\frac{4a}{N-2}$} \; \Sigma_\ell \, (\lambda_\ell - 1)(1 + \delta_\ell) \; \dis\int_0^{t \wedge \tau} F_s \, ds, t \ge 0.
\end{equation}

\n
It readily follows from Ito's formula that for any $C^2$-function $\rho$ on $\IR$, 
\begin{equation}\label{3.36}
\rho (F_{t \wedge \tau}) - \dis\int^{t \wedge \tau}_0 \cH \rho(F_s) \, ds, t \ge 0,
\end{equation}
is a continuous $(\cF_t)_{t \ge 0}$-local martingale under $\wt{P}_w$, where we have set 
\begin{equation}\label{3.37}
\begin{array}{l}
\cH \, \rho(x) = A x \rho^{\prime\prime}(x) + (C x + D) \, \rho^\prime(x), \;\mbox{for $x \in \IR$, with}
\\[0.5ex]
A = \mbox{\f $\dis\frac{2a}{N-2}$} \; \Sigma_\ell\, (\lambda_\ell - 1) (1 + \delta_\ell), \; C = - \mbox{\f $\dis\frac{2}{N-2}$} \; \Sigma_{\ell \ge 3} \, \lambda_\ell, \; D = a \Sigma_\ell \, (\lambda_\ell -1 ) (1 + \delta_\ell).
\end{array}
\end{equation}

\n
This can be viewed as expressing that under $\wt{P}_w$, up to the exit time of $(W_t)_{t \ge 0}$ from $\Delta_2$, the positive process $F_t = U_t - V_t, t \ge 0$, behaves as a one-dimensional diffusion with generator $\cH$. Note that $D > A$, and the one-dimensional diffusion on $(0,\infty)$ attached to $\cH$ does not reach $0$, see \cite{IkedWata89}, pp.~236-237. \hfill $\square$
\end{remark}

\section{The stationary distribution}
\setcounter{equation}{0}

As seen in Theorem \ref{3.2} of the previous section, the martingale problem attached to $\wt{A}$ on $C(\IR_+, \stackrel{_\circ}{C})$ is well-posed for all initial conditions $w \in \,\stackrel{_\circ}{C}$. The corresponding family of solutions $\wt{P}_w$, $w \in \, \stackrel{_\circ}{C}$, thus provides a strong Markov process on the state space $\stackrel{_\circ}{C}$, see for instance Chapter 10 \S 1 of \cite{StroVara79}. In this section we will show that this Markov process has a unique stationary distribution and we will collect some of its properties, see Theorem \ref{4.1} and Propositions \ref{prop4.2} to \ref{4.4}.

\medskip
The first main result of this section is
\begin{theorem}\label{theo4.1}
\begin{equation}\label{4.1}
\begin{array}{l}
\mbox{There is a unique stationary probability $\wt{\pi}$ for the diffusion on $\stackrel{_\circ}{C}$ given by}
\\
\mbox{the collection $\wt{P}_w$, $w \in\, \stackrel{_\circ}{C}$ in Theorem \ref{theo3.2}}.
\end{array}
\end{equation}
Moreover, with $\alpha_0, \beta_0, \gamma_0$ as in (\ref{3.16}), (\ref{3.17}), (\ref{3.18}), one has
\begin{equation}\label{4.2}
\dis\int_{\stackrel{_\circ}{C}} \,(u-v)^{-(\alpha_0 +1)} + (\lambda_N v - u)^{-(\beta_0 +1)} + e^{\gamma_0 v} \, d \,\wt{\pi}(w) < \infty \;\;\mbox{(with $w = (u,v)$)}.
\end{equation}
\end{theorem}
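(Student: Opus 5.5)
The plan is to use the Lyapunov-Foster condition of Proposition \ref{prop3.3} together with standard Harris-type ergodic theory for nondegenerate diffusions, as in \cite{MeynTwee93}.

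\emph{Existence.} Fix $w \in \stackrel{_\circ}{C}$. Apply Itô's formula to $\Phi$ under $\wt{P}_w$, localized at the exit times $T_\eta$ of $C_\eta$. By (\ref{3.12}), (\ref{3.13}) one has $\wt{A}\Phi \le -1 + (C_\Phi+1)1_\Gamma$. Since $\Phi \ge 0$, this gives
\begin{equation*}
\frac{1}{t}\int_0^t \wt{P}_w[W_s \notin \Gamma]\,ds \le \frac{\Phi(w)}{t} + (C_\Phi+1)\,\frac{1}{t}\int_0^t \wt{P}_w[W_s \in \Gamma]\,ds .
\end{equation*}
Next use the stronger bound of Corollary \ref{cor3.4}: outside a compact $K$ we have $-\wt{A}\Phi \ge c\,\Theta$, where $\Theta(w) = (u-v)^{-\alpha_0-1} + (\lambda_N v-u)^{-\beta_0-1} + e^{\gamma_0 v}$. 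The same computation then yields $\sup_t \frac1t\int_0^t \wt{E}_w[\Theta(W_s)]\,ds < \infty$. Because $\Theta$ blows up at $\partial C$ and at infinity, the Cesàro averages $\frac1t\int_0^t \wt{P}_w[W_s\in\cdot]\,ds$ are tight on $\stackrel{_\circ}{C}$. The process is Feller: well-posedness of the martingale problem with continuous coefficients gives continuity of $w\mapsto \wt{P}_w$ via \cite{StroVara79}. Hence any weak limit point $\wt{\pi}$ is stationary by the Krylov-Bogoliubov argument. Fatou's lemma (lower semicontinuity of $\Theta$ truncated at level $M$, then $M\to\infty$) gives $\int \Theta\, d\wt{\pi} < \infty$, which is (\ref{4.2}).

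\emph{Uniqueness.} This is the step I expect to need the most care. The operator $\wt{A}$ is uniformly elliptic with Lipschitz coefficients on each $C_\eta$ by Lemma \ref{lem3.1} and (\ref{3.10}). So for $t>0$ the transition probabilities $\wt{P}_w[W_t \in \cdot]$ have strictly positive continuous densities on every connected open $C_\eta$ containing $w$, by the support theorem or by Harnack/PDE estimates for the localized diffusion. This gives irreducibility, and it makes every compact set petite, in particular $\Gamma$. Two distinct ergodic stationary measures would be mutually singular. But each of them is equivalent to Lebesgue measure on $\stackrel{_\circ}{C}$: stationarity and positivity of the densities give $\pi(B) = \int \wt{P}_w[W_t\in B]\,d\pi(w) > 0$ whenever $|B|>0$, and absolute continuity follows likewise from the existence of densities. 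This contradicts singularity. Alternatively, I can invoke directly Theorem 4.2 and the positive Harris recurrence results of \cite{MeynTwee93}, which state that a Lyapunov-Foster drift condition toward a petite compact set implies a unique invariant probability.

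The main technical point is to justify rigorously the positivity and absolute continuity of the transition kernel: the coefficients are only Lipschitz, and ellipticity degenerates at $\partial C$. I would handle this by working with the process stopped at the exit time of $C_\eta$. There I compare with the globally uniformly elliptic extension $A_\eta$ used in the proof of Theorem \ref{theo3.2}, whose transition densities are positive and continuous, for instance from Aronson-type bounds. I then use that $\wt{P}_w$ and $P^\eta_w$ agree up to $T_\eta$.
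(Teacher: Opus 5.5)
Your proposal is correct, but its main line is organized differently from the paper's proof. The paper handles existence and uniqueness in one step. It combines the Lyapunov--Foster condition (\ref{3.21}) with the uniform minorization (\ref{4.3}), $\wt{P}_w[W_1\in dx]\ge \eta\,1_\Gamma\,dx$ for all $w\in\Gamma$, and invokes Theorem 4.2 of Meyn--Tweedie to get positive Harris recurrence. It then obtains (\ref{4.2}) from the same theory, using the stronger drift bound of Corollary \ref{cor3.4}. The only hands-on analytic work is (\ref{4.3}), which comes from a lower bound on the Dirichlet heat kernel in a smooth domain $O\supseteq\Gamma$ (Duhamel's formula, full-space kernel estimates, chaining).

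You instead get existence and (\ref{4.2}) from the Lyapunov function alone: Ces\`aro averages, tightness (the sublevel sets of $\Theta$ are compact in $\stackrel{_\circ}{C}$), the Feller property, Krylov--Bogoliubov and Fatou. No minorization is needed at that stage. You get uniqueness from a Doob/Khasminskii-type argument. Since $\wt{P}_w[W_t\in\cdot]$ is absolutely continuous and charges every set of positive Lebesgue measure, every invariant probability is equivalent to Lebesgue measure. By the same positivity it is ergodic, so two distinct invariant probabilities would have to be mutually singular, which is impossible.

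In exchange you need two analytic inputs:
\begin{itemize}
\item absolute continuity of the transition probabilities from every starting point (the argument the paper uses only afterwards, in Proposition \ref{prop4.3});
\item pointwise positivity of the killed density on the whole connected set $C_\eta$, which still needs Harnack or chaining, but only qualitatively rather than uniformly in $w\in\Gamma$.
\end{itemize}
The paper's route yields more in a single citation, namely positive Harris recurrence with its ergodic theorems. Yours proves exactly Theorem \ref{theo4.1} with more elementary tools.

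Your fallback of citing Meyn--Tweedie directly is the paper's proof. There, the claim that positivity of densities makes $\Gamma$ petite is precisely the content of (\ref{4.3})--(\ref{4.4}). It needs a lower bound uniform in the starting point $w\in\Gamma$, not just positivity for each fixed $w$.

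Two small technical points:
\begin{itemize}
\item The set $C_\eta$ in (\ref{3.9}) is unbounded in $v$, so stopping at $T_\eta$ does not make $e^{\gamma_0 v}$ bounded. Localize instead at the exit times $\tau_R$ of the compact sublevel sets $\{\Phi\le R\}$, which tend to infinity by Theorem \ref{theo3.2}, and let $R\to\infty$ by monotone convergence.
\item Since $\wt{A}\Phi\le c_\Phi$ on all of $\stackrel{_\circ}{C}$, the same computation gives $\wt{P}_{w'}[\tau_R\le T]\le (\Phi(w')+c_\Phi T)/R$, uniformly for $w'$ near $w$. This uniform non-explosion estimate is the ingredient you need, besides uniqueness, for the tightness behind the continuity of $w\mapsto\wt{P}_w$ on $C(\IR_+,\stackrel{_\circ}{C})$, and hence for the Feller property.
\end{itemize}
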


\begin{proof}
Specifically, we will show that the diffusion on $\stackrel{_\circ}{C}$ constructed in the last section is ``positive Harris recurrent'', see Section 4 of \cite{MeynTwee93}, see also \cite{Lind92}, p.~91-98, for further background.

\medskip
With the Lyapunov-Foster condition (\ref{3.21}) in Proposition \ref{prop3.3}, an important part of the task has been completed. It will then follow from the application of Theorem 4.2 on p.~529 of \cite{MeynTwee93} that (\ref{4.1}) holds, and with the help of Corollary \ref{cor3.4} that (\ref{4.2}) holds, once we show that
\begin{equation}\label{4.3}
\mbox{for some $\eta > 0$, $\wt{P}_w[W_1 \in dx] \ge \eta \, 1_\Gamma \, dx$, for all $w \in \Gamma$}
\end{equation}
(we recall that $(W_t)_{t \ge 0}$ stands for the canonical process on $C(\IR_+, {\stackrel{_\circ}{C}})$, and $\Gamma$ has been defined in (\ref{3.20})).

\medskip
To this end we consider some smooth relatively compact domain $O$ such that $\Gamma \subseteq O \subseteq \ov{O} \subseteq \, \stackrel{_\circ}{C}$, and denote by $T_O = \inf\{s \ge 0; W_s \not= O\}$ the exit time from $O$. As we now explain, for a suitable (small) $\gamma > 0$, one has
\begin{equation}\label{4.4}
\mbox{$\wt{P}_w [ W_{1 \wedge T_O} \in dx] \ge \gamma \, 1_\Gamma \, dx$, for all $w \in \Gamma$},
\end{equation}

\n
which readily implies (\ref{4.3}) (since the probability in the left member is the sum of $\wt{P}_w[W_1 \in dx, T_O > 1]$ and the measure $\wt{P}_w[W_{T_O} \in dx, T_O \le 1]$ which is carried by the frontier of $O$ and hence orthogonal to $1_\Gamma\, dx$). To prove (\ref{4.4}), we use an extension outside $\ov{O}$ of the diffusion matrix $\wt{a}$ and the drift $\wt{b}$, see (\ref{3.1}), to uniformly Lipschitz functions, with uniform ellipticity for the extension of $\wt{a}$. We use Duhamel's formula to derive a small time lower bound on the Dirichlet heat kernel in $O$ for nearby points in a compact neighborhood of $\Gamma$ contained in $O$, with the help of the full space heat kernel estimates (see Theorem 1 on p.~67 and (4.75) on p.~82 of \cite{IlinKalaOlei62}, and then a chaining argument to deduce (\ref{4.4})). This completes the proof of (\ref{4.3}) and hence of Theorem \ref{theo4.1}.
\end{proof}

\medskip
We then introduce the stationary law of the effective diffusion: 
\begin{equation}\label{4.5}
\wt{P} = \dis\int \wt{\pi} (dw) \, \wt{P}_w \quad \mbox{(a probability on $C(\IR_+, \stackrel{_\circ}{C})$)}.
\end{equation}
Then, one has the following convenient martingale characterization of $\wt{P}$ via: 
\begin{proposition}\label{prop4.2}
$\wt{P}$ is the only probability $Q$ on $C(\IR_+, \stackrel{_\circ}{C})$ such that $Q$ is stationary and for any smooth compactly supported function $\psi$ on $C(\IR_+, \stackrel{_\circ}{C})$, under $Q$
\begin{equation*}
\mbox{$\psi (W_t) - \dis\int^t_0 \,\wt{A}\psi (W_s) \, ds, t \ge 0$, is an $(\cF_t)_{t \ge 0}$-martingale}
\end{equation*}
(recall $(\cF_t)_{t \ge 0}$ is the canonical filtration on $C(\IR_+, \stackrel{_\circ}{C})$, see above Theorem \ref{theo3.2}).
\end{proposition}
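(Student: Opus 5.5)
The proof has two parts. First, $\wt{P}$ itself has the stated properties. Second, any such $Q$ must equal $\wt{P}$.

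\emph{$\wt{P}$ qualifies.} Stationarity of $\wt{P}$ follows from the stationarity of $\wt{\pi}$ in Theorem \ref{theo4.1}, combined with the Markov property of the family $\wt{P}_w$, $w \in \stackrel{_\circ}{C}$. The family is strong Markov by the well-posedness in Theorem \ref{theo3.2} and Chapter 10 of \cite{StroVara79}. The martingale property under $\wt{P}$ is obtained by integrating the martingale property (\ref{3.7}) of each $\wt{P}_w$ against $\wt{\pi}(dw)$. The integrand $\psi(W_t) - \int_0^t \wt{A}\psi(W_s)\,ds$ is bounded on $[0,t]$, because $\psi$ and $\wt{A}\psi$ are bounded when $\psi$ is compactly supported in $\stackrel{_\circ}{C}$. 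One also needs $w \mapsto \wt{P}_w$ to be measurable, which again comes from well-posedness, so the mixture makes sense.

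\emph{Uniqueness.} Let $Q$ be stationary and solve the martingale problem, and let $\nu$ be the law of $W_0$ under $Q$.
\begin{itemize}
\item[(i)] Condition $Q$ on $\cF_0$. A regular conditional probability $Q_\omega$ of $Q$ given $\cF_0$ solves, for $Q$-a.e.\ $\omega$, the martingale problem (\ref{3.7}) started from $W_0(\omega)$. To see this, use a countable family of test functions $\psi$ that is dense in $C^2_c(\stackrel{_\circ}{C})$, rational times, and a countable generating family of bounded $\cF_s$-measurable functionals (Theorem 6.1.3 of \cite{StroVara79}).
\item[(ii)] By uniqueness in Theorem \ref{theo3.2}, $Q_\omega = \wt{P}_{W_0(\omega)}$ for $Q$-a.e.\ $\omega$. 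Hence $Q = \int \nu(dw)\,\wt{P}_w$.
\item[(iii)] Since $Q$ is stationary, the law of $W_t$ under $Q$ equals $\nu$ for every $t$. This says $\nu$ is a stationary probability for the diffusion. By the uniqueness part of Theorem \ref{theo4.1}, $\nu = \wt{\pi}$, and therefore $Q = \wt{P}$.
\end{itemize}

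The main technical point is step (i), the regular conditional probability argument. It is standard, with one caveat: the state space is the open cone $\stackrel{_\circ}{C}$, not $\IR^2$. I would handle this by noting that the martingale problem only involves test functions compactly supported in $\stackrel{_\circ}{C}$. The localization used in the proof of Theorem \ref{theo3.2} (the sets $C_\eta$ and (\ref{3.11})) then shows that uniqueness holds for solutions living on $C(\IR_+, \stackrel{_\circ}{C})$, which is all that step (ii) requires.
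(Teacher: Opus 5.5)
Your proposal is correct and takes the same route as the paper. You condition $Q$ on $W_0$, identify the conditional laws with $\wt{P}_w$ by the uniqueness in Theorem \ref{theo3.2}, and then use stationarity of $Q$ together with the uniqueness of the stationary distribution in (\ref{4.1}) to get $Q=\wt{P}$; you just spell out the regular-conditional-probability details the paper leaves implicit. Your localization caveat is unnecessary, since Theorem \ref{theo3.2} already asserts uniqueness among probabilities on $C(\IR_+, \stackrel{_\circ}{C})$.
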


\begin{proof}
That $\wt{P}$ satisfies these conditions is immediate. Conversely, given a stationary law $Q$ as above, conditioning on $W_0 = w \in \, \stackrel{_\circ}{C}$, the resulting law is a solution to the martingale problem (\ref{3.7}), and hence coincides with $\wt{P}_w$, for a.e.~$w$ relative to the law of $W_0$ under $Q$. The stationarity of the law of $Q$ and (\ref{4.1}) implies that the law of $W_0$ under $Q$ is $\wt{\pi}$ and hence $Q = \wt{P}$. This proves Proposition \ref{prop4.2}.
\end{proof} 

\medskip
The above martingale characterization of $\wt{P}$ will be handy in the companion article \cite{SzniWidm26b} to study the inviscid limit of a certain enstrophy-energy process.

\medskip
Below are some further facts concerning $\wt{\pi}$.

\begin{proposition}\label{prop4.3}
\begin{equation}\label{4.6}
\mbox{$\wt{\pi}$ is absolutely continuous with respect to the Lebesgue measure on $\stackrel{_\circ}{C}$}.
\end{equation}
\end{proposition}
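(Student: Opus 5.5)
We show that $\wt{\pi}$ does not charge Lebesgue-null sets, by combining stationarity with the existence of transition densities for the diffusion. Let $A \subseteq \stackrel{_\circ}{C}$ be a Borel set with Lebesgue measure zero. By stationarity, for every $t > 0$,
\begin{equation*}
\wt{\pi}(A) = \dis\int \wt{\pi}(dw)\, \wt{P}_w[W_t \in A].
\end{equation*}
It therefore suffices to show that $\wt{P}_w[W_t \in A] = 0$ for every $w \in \stackrel{_\circ}{C}$ and some fixed $t > 0$, say $t = 1$. In other words, we need that the law of $W_t$ under $\wt{P}_w$ is absolutely continuous with respect to Lebesgue measure.

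To this end we localize, as in the proof of Theorem \ref{theo3.2}. Fix $w$, and take the exhausting sets $C_\eta$ of (\ref{3.9}) with $\eta$ small enough that $w \in C_\eta$. Split the probability according to the exit time $T_\eta$ from $C_\eta$:
\begin{equation*}
\wt{P}_w[W_t \in A] \le \wt{P}_w[W_t \in A, t < T_\eta] + \wt{P}_w[T_\eta \le t].
\end{equation*}
The second term tends to $0$ as $\eta \to 0$, by (\ref{3.11}). It remains to show that the first term vanishes.

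For the first term, note that up to time $T_\eta$ the law $\wt{P}_w$ coincides with $P^\eta_w$. This is the law of a diffusion on $\IR^2$ whose coefficients are Lipschitz extensions of $\wt{a}$ and $\wt{b}$, and we may choose the extension of $\wt{a}$ to be uniformly elliptic, by (\ref{3.10}). Uniform ellipticity with bounded Lipschitz (in particular H\"older) coefficients gives, by the full space heat kernel results quoted in the proof of Theorem \ref{theo4.1} (\cite{IlinKalaOlei62}), a transition density $p^\eta(t,w,x)$. Hence
\begin{equation*}
\wt{P}_w[W_t \in A, t < T_\eta] \le P^\eta_w[W_t \in A] = \dis\int_A p^\eta(t,w,x)\,dx = 0.
\end{equation*}
Letting $\eta \to 0$ then gives $\wt{P}_w[W_1 \in A] = 0$ for every $w$, and hence $\wt{\pi}(A) = 0$, which proves (\ref{4.6}).

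The main point requiring care is the existence of a density for the extended, uniformly elliptic diffusion. This is classical (parametrix construction), so the only real work is in justifying the localization, and that is already supplied by (\ref{3.11}).

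Alternatively, one can bypass transition densities. The measure $\wt{\pi}$ satisfies $\int \wt{A}\psi \, d\wt{\pi} = 0$ for all $\psi \in C_c^\infty(\stackrel{_\circ}{C})$, and elliptic regularity for such invariant measures (Bogachev--Röckner type results for locally uniformly elliptic operators with locally Lipschitz coefficients) yields a density directly. I would present the first argument, since it uses only tools already invoked in the paper.
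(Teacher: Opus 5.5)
Your argument is correct and is essentially the paper's. Stationarity reduces the claim to absolute continuity of the time-$1$ law under each $\wt{P}_w$. That is obtained by localizing to a region where $\wt{a}, \wt{b}$ admit a uniformly elliptic Lipschitz extension with a heat kernel, and then removing the localization by non-explosion: you split at $T_\eta$ and use (\ref{3.11}), while the paper uses relatively compact $O_m$ with $T_{O_m} \uparrow \infty$ and monotone convergence.

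One small slip: the sets $C_\eta$ of (\ref{3.9}) are unbounded in $v$ and $\wt{a}, \wt{b}$ grow linearly there, so the extension cannot be bounded as you assert. The fix is easy. Either use the bounded sets $C_\eta \cap \{v < 1/\eta\}$, whose exit times still tend to infinity $\wt{P}_w$-a.s. because paths are continuous in $\stackrel{_\circ}{C}$. Or invoke a density result valid for globally Lipschitz, uniformly elliptic coefficients, e.g.\ via Malliavin calculus.
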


\begin{proof}
One can let the open set $O$ above (\ref{4.4}) grow along a sequence $O_n$, $n \ge 0$, of relatively open compact subsets of $\stackrel{_\circ}{C}$ (with $\ov{O}_n \subseteq O_{n+1}$ for each $n$). With the help of the heat kernel bounds in Theorem 1 on p.~67 of  \cite{IlinKalaOlei62}, one finds that for $m > n$, the law of $W_{1 \wedge T_{O_m}}$ restricted to $\ov{O}_n$ under $\wt{P}_w$, with $w \in \ov{O}_n$, has a bounded density. Since $T_{O_m} \uparrow \infty, P_w$-a.s. for $w \in \ov{O}_n$, one finds that for every Lebesgue negligible subset $N$ of $\ov{O}_n$, one has with $T_{O_m}$ the exit time of $O_m$,
\begin{equation}\label{4.7}
\begin{split}
0  &= \dis\int_{\ov{O}_n} \!\wt{\pi} (dw)  \wt{P}_w [ W_{1 \wedge T_{O_m}} \!\in \!N] \underset{n \r \infty}{\uparrow} \dis\int_{\ov{O}_n} \!\wt{\pi}(dw)  \wt{P}_w [W_1\!\in \!N] \underset{n \r \infty}{\uparrow} 
\dis\int \!\wt{\pi} (dw)  \wt{P}_w [W_1\!\in\!N]
\\[1ex]
&= \wt{\pi}(N).
\end{split}
\end{equation}
Since $N$ is an arbitrary Lebesgue negligible subset of $\ov{O}_n$ and $n \ge 0$ is arbitrary, the claim (\ref{4.6}) follows.
\end{proof}

\n
We will now collect two identities for the stationary distribution $\wt{\pi}$ (or equivalently for the law of $W_0$ under $\wt{P}$). They will be helpful in the next section when proving the condensation bound in Theorem \ref{theo5.1}. They play a similar role as the conservation laws (5.3) and (5.5) on p.~212 of \cite{KuksShir12}, in the context of the stationary stochastically excited Navier-Stokes equation on a $2$-dimensional torus. We introduce the notation (see (\ref{1.5}, (\ref{1.7})):
\begin{equation}\label{4.8}
B_0 = a \Sigma_\ell \,(1 + \delta_\ell) \; \mbox{and} \; B_1 = a \Sigma_\ell \, \lambda_\ell \,(1 + \delta_\lambda).
\end{equation}

\begin{proposition}\label{prop4.4}
Denoting by $\wt{E}$ the $\wt{P}$-expectation, see {\rm (\ref{4.5})}, and with $W_0 = (U_0,V_0)$ the value at time $0$ of the canonical process $(W_t)_{t \ge 0}$, one has
\begin{align}
& B_0 = 2 \wt{E}\, [U_0] \label{4.9}
\\[1ex]
&B_1 = 2 \wt{E}\, [\Sigma_\ell \,\lambda_\ell \, q_\ell(W_0)]. \label{4.10}
\end{align}
\end{proposition}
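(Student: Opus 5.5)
Both identities come from applying the stationarity relation $\int \wt{A}\psi \, d\wt{\pi} = 0$ to the coordinate functions $\psi(u,v) = v$ and $\psi(u,v) = u$. By (\ref{3.6}), $\wt{A} v = a\Sigma_\ell(1+\delta_\ell) - 2\Sigma_\ell q_\ell = B_0 - 2u$, using (\ref{2.3}). Likewise $\wt{A} u = B_1 - 2\Sigma_\ell \lambda_\ell q_\ell$. So if $\int \wt{A}u \, d\wt{\pi} = \int \wt{A}v \, d\wt{\pi} = 0$, we get (\ref{4.10}) and (\ref{4.9}) respectively. The only issue is justifying this for unbounded functions that are not compactly supported in $\stackrel{_\circ}{C}$.

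First I would check integrability. We have $u \le \lambda_N v$ on $C$, and by (\ref{4.2}) $e^{\gamma_0 v}$ is $\wt{\pi}$-integrable, so $u$, $v$ and all their moments are integrable. The functions $q_\ell$ are homogeneous of degree one and Lipschitz, hence bounded by $c\,u$, so $\wt{A}u$ and $\wt{A}v$ are $\wt{\pi}$-integrable as well.

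To justify $\int \wt{A}\psi \, d\wt{\pi} = 0$, I would take smooth cutoffs $\chi_k$ with compact support in $\stackrel{_\circ}{C}$, equal to $1$ on $\{\Phi \le k\}$, and set $\psi_k = \chi_k \psi$. For these, the martingale property in (\ref{3.7}) together with stationarity of $\wt{P}$ gives $\int \wt{A}\psi_k \, d\wt{\pi} = \wt{E}[\psi_k(W_1) - \psi_k(W_0)] = 0$.

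Passing to the limit $k \to \infty$ is the main obstacle. Here $\wt{A}\psi_k = \chi_k \wt{A}\psi + \psi \wt{A}\chi_k + 2\langle \wt{a}\nabla\chi_k, \nabla\psi\rangle/2$, and the cutoff derivatives must be controlled near the boundary rays $u = v$ and $u = \lambda_N v$, as well as at infinity. My first attempt would avoid cutoffs by a stopping-time argument. Localize with $T_\eta$, the exit time of $C_\eta$. Optional stopping gives $\wt{E}_w[\psi(W_{t\wedge T_\eta})] - \psi(w) = \wt{E}_w\big[\int_0^{t\wedge T_\eta} \wt{A}\psi(W_s)\,ds\big]$, since on $\ov{C}_\eta$ the function $\psi$ coincides with a compactly supported one. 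Next let $\eta \to 0$. Since $T_\eta \to \infty$ a.s. (non-explosion, see (\ref{3.11})), dominated convergence applies once we have a uniform bound on $\sup_{s\le t} W_s$. That bound follows from the Lyapunov function $\Phi_V = e^{\gamma_0 v}$: by (\ref{3.30}) it gives $\wt{E}_w[e^{\gamma_0 V_{t\wedge T_\eta}}] \le e^{\gamma_0 v} + c_V t$, and Doob's inequality applied to the associated supermartingale bounds the expected running maximum. Integrating against $\wt{\pi}$, which integrates $e^{\gamma_0 v}$ by (\ref{4.2}), and using stationarity, the left side vanishes and we get $t \int \wt{A}\psi \, d\wt{\pi} = 0$.

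A cleaner alternative for the dominated convergence step is to apply the same argument to $\psi = v^2$ and $\psi = u^2$ to obtain $L^2$ bounds, and then use uniform integrability. Either route gives the two identities and proves Proposition \ref{prop4.4}.
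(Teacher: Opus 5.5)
Your proof is correct in substance, and it rests on the same identity as the paper's: by (\ref{2.3}), $\wt{A}u = B_1 - 2\Sigma_\ell\lambda_\ell q_\ell$ and $\wt{A}v = B_0 - 2u$, and both must have zero $\wt{\pi}$-mean. The difference is in how the localization is removed.

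The paper works directly under the stationary law $\wt{P}$. By Proposition \ref{prop4.2}, $M_t = U_t - U_0 - \int_0^t \wt{A}u(W_s)\,ds$ is a continuous local martingale with bracket $4a\int_0^t \Sigma_\ell\lambda_\ell(1+\delta_\ell)q_\ell(W_s)\,ds$. This bracket is integrable by stationarity and (\ref{4.2}), so $M$ is a square-integrable martingale, and $\wt{E}[M_1]=0$ gives (\ref{4.10}) at once. No bound on running maxima is needed.

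You instead prove Dynkin's formula $\wt{E}_w[\psi(W_t)]-\psi(w)=\wt{E}_w[\int_0^t\wt{A}\psi(W_s)\,ds]$ for each starting point $w$, and then integrate against $\wt{\pi}$ using Fubini. This requires an integrable running maximum. You can get it from (\ref{3.30}): $e^{\gamma_0 V_{s\wedge\tau}}-c_V(s\wedge\tau)+c_V t$, $s\le t$, is a non-negative supermartingale. Doob's maximal inequality then gives only a weak-type bound for $\sup_{s\le t}e^{\gamma_0 V_s}$. That already yields exponential tails, hence all moments, for $\sup_{s\le t}V_s$, and $U\le\lambda_N V$ handles $U$. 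This is what your ``Doob's inequality'' step should say, since the $L^p$ form does not apply to supermartingales. Your route costs this extra estimate, which the bracket criterion under $\wt{P}$ avoids. In return it yields Dynkin's formula for $u$ and $v$ from every starting point.

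One step needs fixing. The set $C_\eta$ in (\ref{3.9}) is unbounded: only $v>\eta$ is imposed. So $\psi=u$ does not coincide on $\ov{C}_\eta$ with a compactly supported function, and stopping at $T_\eta$ alone does not let you invoke the martingale problem (\ref{3.7}). You must also stop when $V$ first exceeds a level $R$. Alternatively, use exit times of an exhausting sequence of relatively compact open subsets of $\stackrel{_\circ}{C}$; these tend to infinity pathwise on $C(\IR_+,\stackrel{_\circ}{C})$, so (\ref{3.11}) is not even needed. Your maximal bound then also controls the limit $R\to\infty$. The same extra localization is required when you apply (\ref{3.30}) to $e^{\gamma_0 v}$.
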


\begin{proof}
We prove (\ref{4.10}). The proof of (\ref{4.9}) is similar. By Proposition \ref{prop4.2} and stochastic calculus we know that
\begin{equation}\label{4.11}
M_t = U_t - U_0 - \dis\int^t_0 \, B_1 - 2 \Sigma_\ell \, \lambda_\ell \, q_\ell (W_s) ds, t \ge 0,
\end{equation}
is a continuous local martingale with bracket process
\begin{equation}\label{4.12}
\langle M \rangle_t = 4 a \dis\int^t_0 \Sigma_\ell \,\lambda_\ell \, (1 + \delta_\ell)\, q_\ell \,(W_s) \, ds, t \ge 0.
\end{equation}
By stationarity and (\ref{4.2}), $U_t, M_t$, and $\langle M \rangle_t$ are $\wt{P}$-integrable for any $t \ge 0$, and by Doob's Inequality, see \cite{KaraShre88} p.~14, $(M_t)_{t \ge 0}$ is a continuous square integrable martingale. It follows that
\begin{equation}\label{4.13}
0 = \wt{E} [M_1] \stackrel{\rm stationarity}{=} 2 \Sigma_\ell \, \lambda_\ell \, \wt{E} \,[q_\ell \,(W_0)] - B_1.
\end{equation}
This proves (\ref{4.10}). One shows (\ref{4.9}) in a similar fashion with $V_t, t \ge 0$, in place of $U_t, t \ge 0$, for the statement replacing (\ref{4.11}).
\end{proof}

In contrast to the Navier-Stokes case, where the conservation law (5.5) on p.~212 of \cite{KuksShir12}, involves the square of the $L^2$-norm of the gradient of the vorticity, and is hard to make use of, the identities (\ref{4.10}) and (\ref{4.9}), together with Theorem \ref{theo2.3}, will play an important role in the derivation of the condensation bound, see Theorem \ref{theo5.1}.

\medskip
We conclude this section with the

\begin{remark}\label{rem4.5} \rm The probability $\wt{\pi}$ in Theorem \ref{theo4.1} is typically not explicit. However, when all $\delta_\ell$ in (\ref{1.7}) are equal, i.e.~when
\begin{equation}\label{4.14}
\mbox{for some $\delta \in (-1,0], \delta_\ell = \delta$ for all $1 \le \ell \le N$},
\end{equation}
then
\begin{equation}\label{4.15}
\begin{array}{l}
\mbox{$\wt{\pi} = \nu_{a(1 + \delta)}$: the image under the map $(|x|^2, |x|^2_{-1})$ of the Gaussian on $\IR^N$ with}
\\
\mbox{density $\big(\pi a(1 + \delta)\big)^{-N/2} \exp \Big\{- \mbox{\f $\dis\frac{|x|^2}{a(1 + \delta)}$} \Big\} \, dx$ (i.e.~$\mu$ in (\ref{1.6}) when $\delta = 0$).}
\end{array}
\end{equation}

\n
We prove (\ref{4.15}) in the case $\delta = 0$, the general case follows by replacing $a$ with $a(1 + \delta)$. The martingale problem (\ref{3.7}) being well-posed, it suffices by the theorem on p.~2 and Section 4 of \cite{Eche82} to show that for every smooth compactly supported $\psi$ on $\stackrel{_\circ}{C}$ one has
\begin{equation}\label{4.16}
\dis\int_{\stackrel{_\circ}{C}}\,d\nu_a(w) \; \wt{A} \psi (w) = 0.
\end{equation}
To this end note that $\mu$ is the stationary distribution for the diffusion on $\IR^N$ attached to $L = \Sigma_\ell \, \lambda_\ell \, (\frac{a}{2} \, \partial^2_\ell - x_\ell\,\partial_\ell)$, i.e.~$\wt{L}$ in (\ref{1.8}) when all $\delta_\ell$ equal $0$. Hence, setting $g(x) = \psi(|x|^2, |x|^2_{-1})$, one has
\begin{equation}\label{4.17}
\dis\int_{\IR^N} d \mu(x) \, Lg(x) = 0 .
\end{equation}
However, using (\ref{1.10}) to express $Lg$ and the fact that $q_\ell(|x|^2,|x|^2_{-1})$ equals $E^\mu[x^2_\ell \, \big| \, |x|^2, |x|^2_{-1}]$ by (\ref{2.7}), we find by (\ref{1.10}) and (\ref{3.6}) that
\begin{equation}\label{4.18}
0 = \dis\int d\mu(x) \, Lg(x) = \dis\int  d\mu(x) \; \wt{A} \psi ( |x|^2, |x|^2_{-1}) = \dis\int d \nu_a(w) \; \wt{A} \psi(w).
\end{equation}
This proves (\ref{4.16}) and the equality (\ref{4.15}) follows in the case $\delta = 0$. It also follows in the general case (\ref{4.14}), as explained above. \hfill $\square$
\end{remark}

\section{Condensation bound}
\setcounter{equation}{0}

The main object of this section is Theorem \ref{theo5.1}. It states, in the context of the stationary diffusion of the previous section, a quantitative lower bound on the ratio of the expected energy with the expected enstrophy. This inequality takes full significance when the effective diffusion investigated in the present article appears as an inviscid limit in the companion article \cite{SzniWidm26b}. Theorem \ref{theo2.3}, through its upper bound on the functions $\wh{q}_i, i \ge 2$, is an important ingredient in the proof of Theorem \ref{theo5.1}.

\medskip
We recall that $B_0 = a \Sigma_\ell (1 + \delta_\ell)$ and $B_1 = a \Sigma_\ell \, \lambda_\ell (1 + \delta_\ell)$, see (\ref{4.8}). We have
\begin{theorem}\label{theo5.1} (Condensation bound)

\medskip\n
For any $\ell_0$ in $\{3, \dots, N\}$ (recall $N = 2n$), writing $\ell_0 = 2i_0$ or $\ell_0 = 2 i_0 -1$, with $2 \le i_0 \le n$, depending on whether $\ell_0$ is even or odd, one has
\begin{equation}\label{5.1}
2 \wt{E} [U_0 - V_0] \le \mbox{\f $\dis\frac{B_1 - B_0}{\lambda_{\ell_0} - 1}$} + \mbox{\f $\dis\frac{\lambda_3}{\lambda_3- 1}$}  \; \sum^{i_0 - 2}_{k=1} \; \mbox{\f $\dis\frac{1}{n-k}$}  \; B_0 \le  \mbox{\f $\dis\frac{B_1 - B_0}{\lambda_{\ell_0} - 1}$} +  \mbox{\f $\dis\frac{\lambda_3}{\lambda_3-1}$}  \; \mbox{\f $\dis\frac{\ell_0}{N - \ell_0}$} \;B_0 
\end{equation}
(where $\wt{E}$ denotes the $\wt{P}$-expectation, see {\rm (\ref{4.5})}, and the sum in the middle member of {\rm (\ref{5.1})} is omitted when $i_0 = 2$).
\end{theorem}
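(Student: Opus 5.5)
The plan is to combine the identity \eqref{2.13} with the stationarity identities \eqref{4.9}, \eqref{4.10} of Proposition \ref{prop4.4} and the upper bound \eqref{2.12} of Theorem \ref{theo2.3}. Since $\lambda_{\ell_0}=\mu_{i_0}$ in both parity cases, I would work throughout with the grouped functions $\wh{q}_i$.

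By \eqref{2.13}, on $\stackrel{_\circ}{C}$ we have $U-V=\sum_{i=2}^n (1-\mu_i^{-1})\,\wh{q}_i(U,V)$, and every summand is non-negative. I would split this sum into a \emph{high part} $i\ge i_0$ and a \emph{low part} $2\le i\le i_0-1$.

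\textbf{High part.} For $i\ge i_0$ we have $\mu_i \ge \mu_{i_0} > \mu_{i_0}-1$, hence
\[
1-\mu_i^{-1}=\frac{\mu_i-1}{\mu_i}\le \frac{\mu_i-1}{\mu_{i_0}-1}.
\]
Since all terms $(\mu_i-1)\wh{q}_i$ are non-negative, the high part is therefore at most $(\mu_{i_0}-1)^{-1}\sum_{\ell}(\lambda_\ell-1)\,q_\ell(W_0)$. Taking $\wt{E}$ and using \eqref{4.10} together with $\sum_\ell q_\ell = u$ from \eqref{2.3} and \eqref{4.9}, we get
\[
2\wt{E}\Big[\sum_\ell(\lambda_\ell-1)\,q_\ell(W_0)\Big] = B_1-B_0 .
\]
So twice the expected high part is at most $(B_1-B_0)/(\lambda_{\ell_0}-1)$.

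\textbf{Low part.} For $2\le i\le i_0-1$, the bound \eqref{2.12} gives
\[
(1-\mu_i^{-1})\,\wh{q}_i \le \frac{u-v}{n-i+1}\le \frac{u}{n-i+1}.
\]
Setting $k=i-1$, which runs over $1,\dots,i_0-2$, and taking expectations with $2\wt{E}[U_0]=B_0$ from \eqref{4.9}, twice the expected low part is at most $\sum_{k=1}^{i_0-2}(n-k)^{-1}\,B_0$. This is even without the factor $\lambda_3/(\lambda_3-1)\ge 1$, so the middle member of \eqref{5.1} follows a fortiori.

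Integrability of all quantities is not an issue: the $q_\ell$ have linear growth and are non-negative, and $\wt{\pi}$ has exponential moments in $v$ by \eqref{4.2}, with $u\le\lambda_N v$.

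\textbf{Second inequality.} When $\ell_0=N$ the right member is infinite and there is nothing to prove. Otherwise we have $i_0\le n$ and $N-\ell_0\ge 1$, and
\[
\sum_{k=1}^{i_0-2}\frac{1}{n-k}\le \frac{i_0-2}{n-i_0+2}=\frac{2i_0-4}{2n-2i_0+4}\le\frac{2i_0-1}{2n-2i_0+1}\le \frac{\ell_0}{N-\ell_0}.
\]
The last step uses $\ell_0\ge 2i_0-1$ and $N-\ell_0\le 2n-2i_0+1$. The step I consider most delicate is the high-part estimate: one must check that weighting by $(\mu_i-1)/(\mu_{i_0}-1)$ dominates $1-\mu_i^{-1}$ for every $i\ge i_0$, and that discarding the non-negative terms with $i<i_0$ (and $i=1$) is legitimate. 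Both facts follow from the non-negativity of the $q_\ell$ in \eqref{2.1}.
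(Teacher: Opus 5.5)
Your proof is correct, and it actually proves slightly more than the statement, using a different bookkeeping from the paper's.

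\textbf{The paper's route.} The paper starts from the exact identity $B_1-B_0=2\wt{E}[\sum_{\ell>2}(\lambda_\ell-1)q_\ell(W_0)]$. It uses $(\lambda_{\ell_0}-1)(1-\lambda_\ell^{-1})\le \lambda_{\ell\vee\ell_0}-1$ termwise. This leaves the correction $2\wt{E}[\sum_{1<i<i_0}(\mu_{i_0}-\mu_i)\,\wh{q}_i(W_0)]$, which weights the raw $\wh{q}_i$. So applying (\ref{2.12}) introduces a factor $(1-\mu_i^{-1})^{-1}$. The paper then bounds this factor by $(1-\mu_2^{-1})^{-1}=\lambda_3/(\lambda_3-1)$, together with $\mu_{i_0}-\mu_i\le\mu_{i_0}-1$.

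\textbf{Your route.} You split $U-V=\sum_{i\ge 2}(1-\mu_i^{-1})\wh{q}_i$ directly into high and low modes. You dominate the high part by $(\mu_{i_0}-1)^{-1}\sum_\ell(\lambda_\ell-1)q_\ell$, adding in the non-negative low-mode terms. You then apply (\ref{2.12}) to the low modes in exactly the form it controls, namely to $(1-\mu_i^{-1})\wh{q}_i$. The factor $(1-\mu_i^{-1})^{-1}$ therefore never appears. The result is the middle member of (\ref{5.1}) with $\lambda_3/(\lambda_3-1)$ replaced by $1$, obtained by a shorter argument.

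\textbf{What the paper's route keeps.} Its intermediate estimate, the first inequality in (\ref{5.4}), has weights $(\mu_{i_0}-\mu_i)/(1-\mu_i^{-1})$. These vanish as $\mu_i\to\mu_{i_0}$, so they can beat yours termwise when the eigenvalues just below $\mu_{i_0}$ are close to it. The paper discards this advantage in its final bound, so the stated inequality does not benefit from it.

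Your treatment of the second inequality is also fine, including the degenerate case $\ell_0=N$, which the paper passes over.
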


Note that this inequality shows that the left member of {\rm (\ref{5.1})} is small compared to $B_0 = 2 \wt{E}[U_0]$, see {\rm (\ref{4.9})}, and~there is some ``condensation'', when $\ell_0$ can be chosen so that $\frac{B_1}{B_0} \ll \lambda_{\ell_0}$ and $\ell_0 \ll N$. The ratio $B_1/B_0$ can be viewed as an ``effective spectral value'' at which Brownian forcing occurs. We refer to Remark 6.2 of \cite{SzniWidm26b} for a more detailed discussion.

\begin{proof}
We have by Proposition \ref{prop4.4} and (\ref{2.3})
\begin{equation}\label{5.2}
\begin{split}
B_1 - B_0 = 2 \wt{E} \,\big[\Sigma_{\ell > 2} (\lambda_\ell - 1) \, q_\ell(W_0)] & = 2 \wt{E} \,\big[ \Sigma_{\ell > 2} (\lambda_{\ell \vee \ell_0} -1) \, q_\ell (W_0)\big]
\\[1ex]
& - 2 \wt{E} \,\Big[ \sum_{2 < \ell < \ell_0} (\lambda_{\ell_0} - \lambda_\ell)\, q_\ell (W_0)\Big]\,.
\end{split}
\end{equation}
It then follows that
\begin{equation}\label{5.3}
\begin{array}{l}
2 ( \lambda_{\ell_0} -1 ) \, \wt{E} [ U_0 - V_0] \stackrel{(\ref{2.3})}{=} 2 (\lambda_{\ell_0} - 1) \, \wt{E} \,\Big[ \Sigma_{\ell > 2} \; \Big( 1 - \mbox{\f $\dis\frac{1}{\lambda_\ell}$}\Big) \, q_\ell (W_0)\Big] \le
\\[1ex]
2 \wt{E} \big[ \Sigma_{\ell > 2} (\lambda_{\ell \vee \ell_0} -1) \, q_\ell (W_0)\big]  \stackrel{(\ref{5.2})}{=} B_1 - B_0 + 2 \wt{E}\, \Big[\sum\limits_{2 < \ell < \ell_0} (\lambda_{\ell_0} - \lambda_\ell)\, q_\ell (W_0)\Big]\,.
\end{array}
\end{equation}

\n
The last expectation of (\ref{5.3}) will now be bounded with the help of the important inequality (\ref{2.12}). It is equal to
\begin{equation}\label{5.4}
\begin{array}{l}
\wt{E} \, \Big[\sum\limits_{1 < i < i_0} (\mu_{i_0} - \mu_i) \, \wh{q}_i (W_0)\Big] \stackrel{(\ref{2.12})}{\le} \sum\limits_{1< i < i_0} \; \mbox{\f $\dis\frac{\mu_{i_0} - \mu_i}{n - i + 1}$} \; \mbox{\f $\dis\frac{\wt{E} [U_0 - V_0]}{1 - 1/\mu_i}$} \le
\\[2ex]
(\lambda_{\ell_0}-1) (1 - \mu_2^{-1})^{-1} \, \Sigma^{i_0 -2 }_{k=1} \; \mbox{\f $\dis\frac{1}{n-k}$} \; \wt{E} [U_0 - V_0]
\\[1.5ex]
 \mbox{(the sum is understood as $0$ when $i_0 = 2$)}.
\end{array}
\end{equation}

\n
Inserting this upperbound for the last expectation in (\ref{5.3}), using (\ref{4.9}), and dividing by $(\lambda_{\ell_0} -1)$ yields the first inequality of (\ref{5.1}). The second inequality is immediate since $2(i_0 - 2) \le \ell_0$. This proves Theorem \ref{theo5.1}.
\end{proof}

\begin{appendix}
\section{Appendix: Some density and volume calculations}
\setcounter{equation}{0}

In this appendix we collect various probability density and volume calculations, which are useful in the study of the functions $q_\ell$ of Section 2. Of special interest are informations concerning the $(n-2)$-dimensional volume of the compact convex polytope $\IV_w$ in (\ref{A.9}).

\medskip
The set-up is (\ref{1.1}) - (\ref{1.6}), but in this appendix we only assume that
\begin{equation}\label{A.1}
\mbox{$n \ge 3$ (and hence $N = 2n \ge 6$)}.
\end{equation}
Except for the present Appendix A, we always assume $n \ge 4$ (and $N \ge 8$).

\bigskip
We define the random variables on $\IR^N$ endowed with $\mu$, see (\ref{1.6}):
\begin{equation}\label{A.2}
\begin{split}
U &=  |x|^2, V = |x|^2_{-1}, S_i = x^2_{2i} + x^2_{2 i - 1}, 1 \le i \le n, \; \mbox{so that}
\\[0.5ex]
U &=  \Sigma^n_1 \,S_i \; \mbox{and} \; V = \Sigma^n_1 \; \mbox{\f $\dis\frac{1}{\mu_i}$} \; S_i .
\end{split}
\end{equation}

\smallskip\n
Under the law $\mu$ the $S_i, 1 \le i \le n$, are independent exponential variables with parameter $1/a$. Using the change of variable formula, we see that for $\Phi$ bounded measurable function on $\IR^n$ one has (with $E^\mu$ denoting the $\mu$-expectation):
\begin{equation}\label{A.3}
\begin{array}{l}
E^\mu [ \Phi (U,V,S_3, \dots, S_n)] = E^\mu[\Phi (\Sigma^n_1\,S_i, \Sigma^n_1 \; \mbox{\f $\dis\frac{1}{\mu_i}$} \; S_i, S_3, \dots, S_n)] =
\\[1.5ex]
\dis\int \!\!\!\Phi (u,v,s_3,\dots,s_n)  \,1\{(u,v) \in C\}  \,1\{(s_3, \dots, s_n) \in \IV_{u,v}\} 
\\[1ex]
\mbox{\f $\dis\frac{e^{-u/a}}{a^n(1 - \mu_2^{-1})}$} \,du \, dv \, ds_3 \dots ds_n,
\end{array}
\end{equation}
where $C = \{w = (u,v) \in \IR^2; 0 \le v \le u \le \lambda_N v\}$ as in (\ref{1.11}) and setting
\begin{align}
& \Sigma = \{3, \dots, n\}, \label{A.4}
\\[1ex]
&\IV_{u,v} = \Big\{(s_3, \dots, s_n) \in \IR^\Sigma_+; \; u - \mu_1 v \ge \Sigma^n_3 \, \Big(1- \mbox{\f $\dis\frac{\mu_1}{\mu_i}$}\Big)\,s_i \; \mbox{and}
 \label{A.5}
\\
&\qquad \qquad \qquad\qquad  \qquad \quad u - \mu_2 v \le \Sigma^n_3 \, \Big(1-\mbox{\f $\dis\frac{\mu_2}{\mu_i}$}\Big)\,s_i\Big\}, \nonumber
\end{align}
and where it should be observed that $s_1,s_2$ defined by
\begin{equation}\label{A.6}
\left\{ \begin{split} 
\mu_2 ( 1- 1/\mu_2) \,s_1 &=  -u +\mu_2 v + \Sigma^n_3 \, \Big(1-\mbox{\f $\dis\frac{\mu_2}{\mu_i}$}\Big)\,s_i,
\\
( 1- 1/\mu_2) \,s_2 &= u - \mu_1 v - \Sigma^n_3 \, \Big(1-\mbox{\f $\dis\frac{\mu_1}{\mu_i}$}\Big)\,s_i,
\end{split} \right.
\end{equation}
are non-negative when $(s_3, \dots, s_n) \in \IV_{u,v}$ and determined by
\begin{equation}\label{A.7}
\Sigma^n_1 \, s_i = u \; \mbox{and} \; \Sigma^n_1 \; \mbox{\f $\dis\frac{1}{\mu_i}$}\;s_i = v.
\end{equation}
It is convenient to introduce the linear forms on $\IR^\Sigma$:
\begin{equation}\label{A.8}
\ell_1(\sigma) = \Sigma^n_3 \, \Big(1-\mbox{\f $\dis\frac{\mu_1}{\mu_i}$}\Big)\;s_i, \;\ell_2(\sigma) = \Sigma^n_3 \, \Big(1-\mbox{\f $\dis\frac{\mu_2}{\mu_i}$}\Big)\;s_i, \;\mbox{for} \; \sigma = (s_3, \dots, s_n) \in \IR^\Sigma,
\end{equation}
so that for $w = (u,v)$ in $\IR^2$, the compact convex polytope in $\IR_+^\Sigma$:
\begin{equation}\label{A.9}
\IV_w = \{ \sigma \in \IR^\Sigma_+; \; u - \mu_1 v \ge \ell_1 (\sigma) \; \mbox{and} \; u - \mu_2 v \le \ell_2(\sigma)\}
\end{equation}

\smallskip\n
coincides with $\IV_{u,v}$ in (\ref{A.5}) when $w \in C$, and is empty when $w \notin C$.

\medskip
We write for $w = (u,v)$ in $\IR^2$
\begin{equation}\label{A.10}
\mbox{$V(u,v)$ or sometimes $V_w$ for the $(n-2)$-dimensional volume of $\IV_w \subseteq \IR_+^\Sigma$}.
\end{equation}
Note that the volume does not change if one makes the inequalities strict and assume that the components of $\sigma$ are positive in the right member of (\ref{A.9}). With Fatou's Lemma we thus see that
\begin{equation}\label{A.11}
\mbox{$V_w$ is a non-negative continuous function on $\IR^2$ which vanishes outside $C$}.
\end{equation}
It is also positive on the interior $\stackrel{_\circ}{C}$ of $C$, see (\ref{A.18}) below. 

\medskip
Using scaling, one sees from (\ref{A.9}), (\ref{A.10}) that $V_w$ is an homogeneous function of degree $n-2$:
\begin{equation}\label{A.12}
\mbox{$V_{\lambda w} = \lambda^{n-2} V_w$, for $\lambda \ge 0$ and $w \in \IR^2$}.
\end{equation}
Further, for $w \in C$ and $i \in \Sigma \,(= \{3, \dots, n\})$ one introduces the important values 
\begin{equation}\label{A.13}
t_{1,i} = \mbox{\f $\dis\frac{u - \mu_1 v}{1 - \mu_1/\mu_i}$}, \;\; t_{2,i} = \mbox{\f $\dis\frac{u - \mu_2 v}{1 - \mu_2/\mu_i}$}, \; \mbox{with $w = (u,v)$}.
\end{equation}
They are such that for $i \in \Sigma$ and $e_i$ the coordinate vector in the direction $i$, one has:
\begin{equation}\label{A.14}
\mbox{$\ell_1(t_{1,i} e_i) = u - \mu_1 v$ and $\ell_2(t_{2,i} e_i) = u - \mu_2 v$, for each $i \in \Sigma$},
\end{equation}

\vspace{-4ex}
\begin{equation}\label{A.15}
\begin{array}{l}
\ell_1(t e_i) < u - \mu_1 v, \; \mbox{exactly when $t < t_{1,i}$},
\\[1ex]
\ell_2(t e_i) > u - \mu_2 v, \; \mbox{exactly when $t > t_{2,i}$},
\end{array}
\end{equation}

\vspace{-1ex}\n
and
\begin{equation}\label{A.16}
\begin{split}
t_{1,i} - t_{2,i} & = \mbox{\f $\dis\frac{1}{(1 - \mu_1/\mu_i)(1 - \mu_2/\mu_i)}$} \; \Big\{\Big(1-\mbox{\f $\dis\frac{\mu_2}{\mu_i}$}\Big) (u- \mu_1 v) - \Big(1-\mbox{\f $\dis\frac{\mu_1}{\mu_i}\Big) (u - \mu_2 v)$}\Big\}
\\[1ex]
& =  \mbox{\f $\dis\frac{\mu_2 - \mu_1}{(1 - \mu_1/\mu_i)(1 - \mu_2/\mu_i)}$}  \;(v - u/\mu_i),
\end{split}
\end{equation}
so that
\begin{equation}\label{A.17}
\mbox{$t_{1,i} > t_{2,i}$ exactly when $u < \mu_i v$}.
\end{equation}

\begin{center}
\psfrag{t1}{$t_{1,j} e_j$}
\psfrag{t2}{$t_{2,j} e_j$}
\psfrag{t1i}{$t_{1,i} e_i$}
\psfrag{t2i}{$t_{2,i} e_i$}
\psfrag{s1}{$\sigma_{i,j}$}
\psfrag{l1}{$\ell_1 < u - \mu_1 v$ and $\ell_2 > u - \mu_2 v$}
\includegraphics[width=6cm]{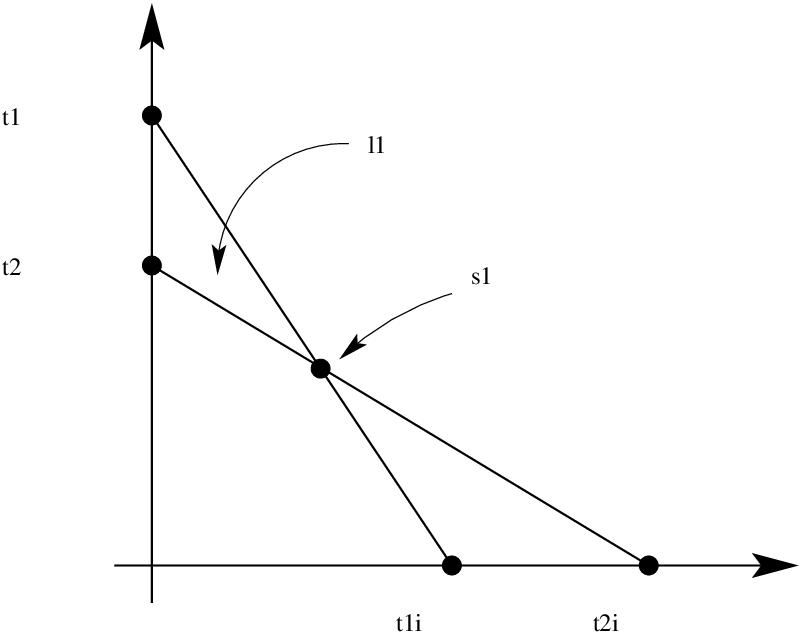}
\end{center}
%
%\smallskip
\begin{center}
Fig.~1: A schematic illustration of $\sigma_{i,j}$ for $i,j$ in $\Sigma$ with $\mu_i < u/v < \mu_j$, see (\ref{A.23}).
\end{center}

\medskip
As we now explain
\begin{equation}\label{A.18}
\mbox{$V_w > 0$ for $w \in \, \stackrel{_\circ}{C}$}.
\end{equation}
Indeed, one has $0 < u < \mu_n v$ and hence $t_{1,n} > t_{2,n} \vee 0 = t^+_{2,n}$ by (\ref{A.17}). For small $\wt{\sigma}$ in $(0, \infty)^\Sigma$, $\frac{1}{2} \;(t_{1,n} + t^+_{2,n}) \; e_n + \wt{\sigma} = \sigma$ belongs to $\IV_w$, see (\ref{A.9}), and  the claim follows. 

\medskip
We now introduce the open subsectors of $C$
\begin{equation}\label{A.19}
\Delta_m = \{w = (u,v) \in \IR^2; \; 0 < \mu_{m-1} \,v < u < \mu_m v\}, \; \mbox{for $2 \le m \le n$},
\end{equation}
so that $\Delta_m$, $2 \le m \le n$, are pairwise disjoint and the union of their closures equals $C$.

\begin{center}
\psfrag{l5}{$e_5$}
\psfrag{l4}{$e_4$}
\psfrag{l3}{$e_3$}
\includegraphics[width=4cm]{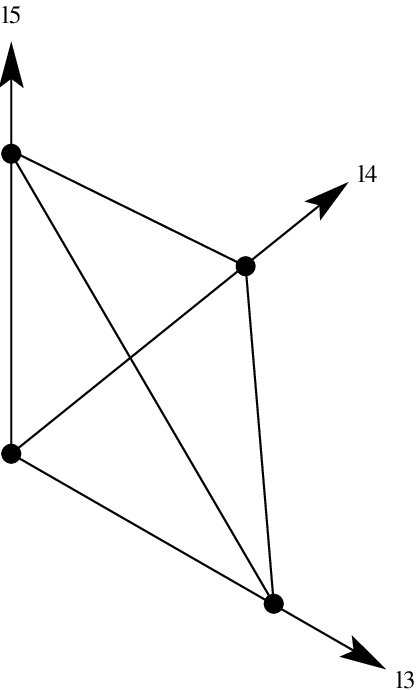} \qquad  \quad \includegraphics[width=4cm]{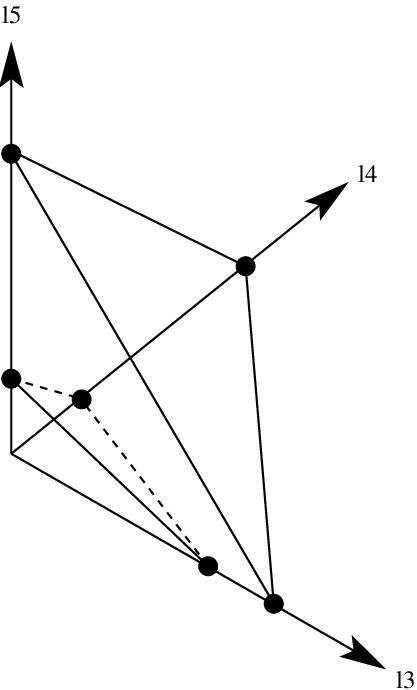} \qquad \quad  \includegraphics[width=4cm]{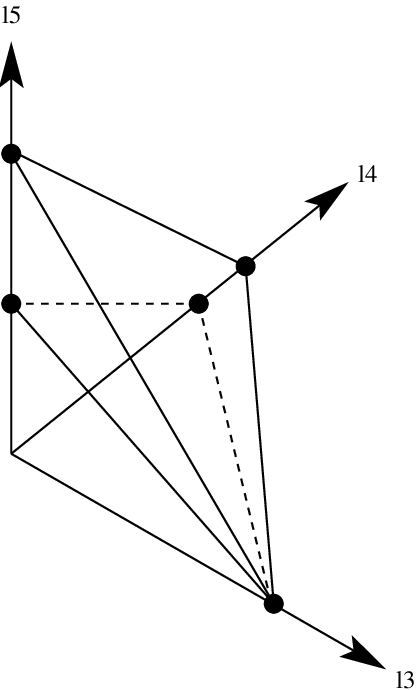}
\end{center}

\qquad $1 = \mu_1 < u/v \le \mu_2$ \qquad \qquad  \qquad $\mu_2 < u/v < \mu_3$  \qquad \qquad \qquad \quad $u/v = \mu_3$

\bigskip
\begin{center}
\psfrag{l5}{$e_5$}
\psfrag{l4}{$e_4$}
\psfrag{l3}{$e_3$}
\psfrag{s35}{$\sigma_{3,5}$}
\psfrag{s34}{$\sigma_{3,4}$}
\psfrag{s45}{$\sigma_{4,5}$}
\includegraphics[width=4cm]{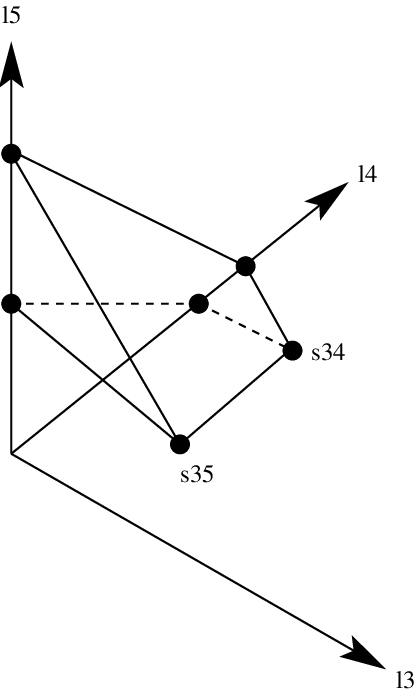} \qquad  \quad \includegraphics[width=4cm]{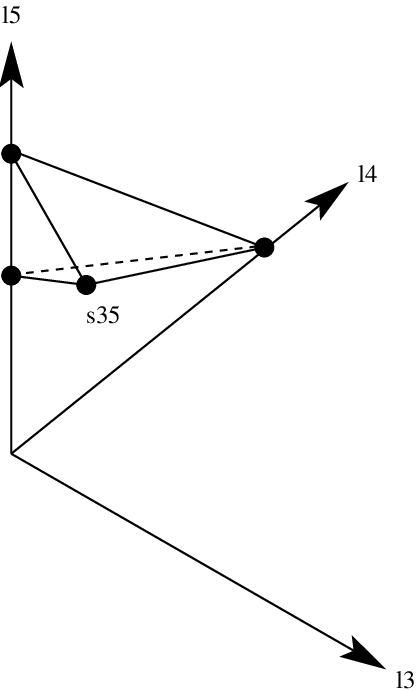} \qquad \quad  \includegraphics[width=4cm]{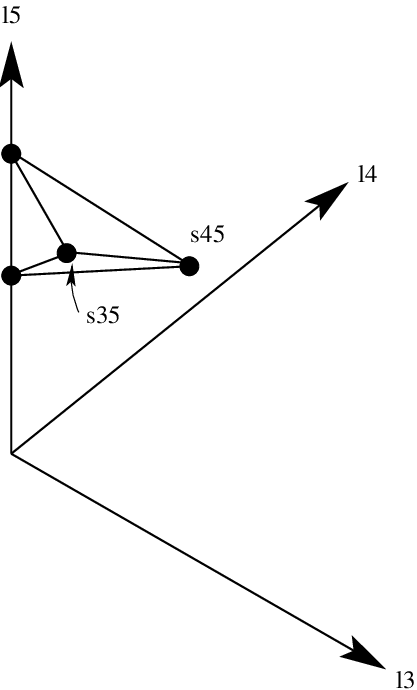}
\end{center}

\qquad $\mu_3 < u/v < \mu_4$ \qquad \qquad  \qquad \qquad  $u/v = \mu_4$  \qquad \qquad \qquad \qquad $\mu_4 < u/v < \mu_5$

\bigskip\bigskip\n
Fig.~2: A schematic illustration of the polytope $\IV_w$ when $n=5$, with varying ratio $u/v$.

\bigskip\bigskip
On $\Delta_2$ and $\Delta_3$ we have the following formulas for the volume of $\IV_w$. 

\medskip
When $w \in \Delta_2$, i.e.~$0 < \mu_1 v(= v) < u < \mu_2 v$, then $u - \mu_2 v < 0$, so that $\IV_w \stackrel{(\ref{A.9})}{=} \{ \sigma \in \IR^\Sigma_+$; $\ell_1(\sigma) \le u - \mu_1 v\}$ (recall $\mu_1 = 1)$ and we find
\begin{equation}\label{A.20}
V_w = \mbox{\f $\dis\frac{1}{(n-2)!}$} \;(u - \mu_1 v)^{n-2} \prod\limits_{3 \le i \le n} \; \Big(1 - \mbox{\f $\dis\frac{\mu_1}{\mu_i}$}\Big)^{-1}.
\end{equation}

\n
When $w \in \Delta_3$, i.e.~$0 < \mu_2 v < u < \mu_3 v$, we see from (\ref{A.16}), (\ref{A.13}) that $t_{1,i} > t_{2,i} > 0$ for all $i$ in $\Sigma$, and we find
\begin{equation}\label{A.21}
V_w = \mbox{\f $\dis\frac{1}{(n-2)!}$} \;\Big\{(u - \mu_1 v)^{n-2} \prod\limits_{3 \le j \le n} \; \Big(1 - \mbox{\f $\dis\frac{\mu_1}{\mu_j}$}\Big)^{-1} - (u-\mu_2 v)^{n-2}  \prod\limits_{3 \le j \le n} \; \Big(1 - \mbox{\f $\dis\frac{\mu_2}{\mu_j}$}\Big)^{-1}\Big\}.
\end{equation}

\bigskip\n
We now turn to the case when $3 < m \le n$. Our main objective is to establish
\begin{proposition}\label{propA.1}
\begin{equation}\label{A.22}
\begin{array}{l}
\mbox{for $3 < m \le n$, the function $V_w$ on $\Delta_m$ coincides with an homogeneous}
\\
\mbox{polynomial of degree $(n-2)$ in $u,v$.}
\end{array}
\end{equation}
\end{proposition}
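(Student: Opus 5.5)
We plan to prove (\ref{A.22}) by computing $V_w$ through inclusion--exclusion on the half-space constraint, following the pattern that produced (\ref{A.20}) and (\ref{A.21}). Write
\[
\IV_w = \{\sigma \in \IR^\Sigma_+;\ \ell_1(\sigma) \le u - \mu_1 v\} \setminus \{\sigma \in \IR^\Sigma_+;\ \ell_1(\sigma) \le u-\mu_1 v,\ \ell_2(\sigma) < u - \mu_2 v\}.
\]
The first set is a simplex, and its volume is the polynomial term in (\ref{A.20}), which is valid on all of $C$ since $u - \mu_1 v \ge 0$ there. It then suffices to show that the volume $W_w$ of the second set, $\{\sigma \ge 0;\ \ell_1(\sigma)\le u-v,\ \ell_2(\sigma) < u-\mu_2 v\}$, is a homogeneous polynomial of degree $n-2$ on $\Delta_m$.

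On $\Delta_m$ with $m>3$ the coefficients $1-\mu_2/\mu_i$ of $\ell_2$ are positive for all $i \in \Sigma$, since $i \ge 3$. Hence $\{\sigma\ge 0;\ \ell_2(\sigma) \le u-\mu_2v\}$ is itself a simplex with vertices $t_{2,i} e_i$. However, it is no longer contained in the $\ell_1$-simplex, because by (\ref{A.17}) we have $t_{2,i} > t_{1,i}$ exactly for $i < m$. We therefore intersect the two simplices. The key tool we intend to use is the standard formula for the volume of $\{\sigma\ge0;\ \ell_1\le c_1,\ \ell_2\le c_2\}$ when $\ell_1,\ell_2$ have positive coefficients. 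Writing $\ell_1 - \ell_2 = (\mu_2-\mu_1)\sum_i s_i/\mu_i$, one can compute the Laplace transform in $(c_1,c_2)$ of this volume as $\prod_i (p\,a_i + q\,b_i)^{-1}/(pq)$, with $a_i = 1-1/\mu_i$ and $b_i = 1-\mu_2/\mu_i$. A partial fraction decomposition in the ratio $p/q$ then inverts this into a sum over $i$ of terms
\[
\frac{(c_1 b_i - c_2 a_i)_+^{\,n-2}}{\prod_{j\neq i}(\ldots)}
\]
plus boundary terms, that is, a piecewise polynomial (spline-type) expression whose breakpoints are exactly where $c_1 b_i - c_2 a_i$ changes sign.

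With $c_1 = u-v$ and $c_2 = u-\mu_2 v$, the quantity $c_1 b_i - c_2 a_i$ is proportional to $v - u/\mu_i$ by (\ref{A.16}). Its sign is therefore constant on each $\Delta_m$. Every positive-part truncation $(\cdot)_+$ thus resolves, on $\Delta_m$, either to zero or to the polynomial itself, so the whole expression is a fixed homogeneous polynomial of degree $n-2$ in $(u,v)$. Homogeneity is already guaranteed by (\ref{A.12}).

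An alternative, more elementary route avoids Laplace transforms and argues by induction on the dimension $n-2$. Integrate out one coordinate $s_n$: the slice at fixed $s_n$ is a polytope of the same type in one dimension fewer, with $(u,v)$ replaced by affine functions of $s_n$. The integration range splits at the values $s_n = t_{1,n}$, $t_{2,n}$ and at the points where the slice parameter crosses the lines $u = \mu_i v$. On $\Delta_m$ the ordering of these breakpoints is fixed, so the integral is a sum of integrals of polynomials over intervals with polynomial endpoints, hence a polynomial.

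The main obstacle is the bookkeeping in either route: verifying that on a fixed $\Delta_m$ no breakpoint crosses another. The partial fraction step also requires distinct ratios $a_i/b_i$, which follows from the strict increase in (\ref{1.2}). We would try the Laplace transform/partial fractions route first, since the sign analysis there reduces cleanly to (\ref{A.16})--(\ref{A.17}).
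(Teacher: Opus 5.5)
Your proposal is correct, but it takes a different route from the paper.

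The paper uses Lawrence's formula for simple polytopes. On $\Delta_m$ it lists the vertices of $\IV_w$ in (\ref{A.25}) and observes that the constraint matrix (\ref{A.26}) does not depend on $w$. Hence $w$ enters (\ref{A.30}) only through $\langle f,\sigma\rangle^{n-2}$, with vertices $\sigma$ linear in $(u,v)$. It then patches these local representations across $\Delta_m$, since the generic $f$ is only locally fixed.

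You instead remove from the $\ell_1$-simplex a two-constraint polytope, and treat its volume $F(c_1,c_2)$ as a function of the right-hand sides. The steps check out:
\begin{itemize}
\item Your Laplace transform $\frac{1}{pq}\prod_{i\in\Sigma}(pa_i+qb_i)^{-1}$ is right.
\item The partial fraction expansion exists because $b_i/a_i=(\mu_i-\mu_2)/(\mu_i-1)$ is positive and strictly increasing in $i$; this ratio is $\gamma_i$ of (\ref{2.15}).
\item All vectors $(a_i,b_i)$, $(1,0)$, $(0,1)$ lie in the closed positive quadrant. So every term inverts to a truncated power supported in a cone inside the quadrant, and uniqueness of the Laplace transform justifies term-by-term inversion. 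You should say this explicitly.
\item Whether the truncations read $(b_ic_1-a_ic_2)_+^{n-2}$ or $(a_ic_2-b_ic_1)_+^{n-2}$ depends only on whether you expand in $q/p$ or $p/q$.
\item Either way, since $a_i(u-\mu_2v)-b_i(u-v)=(\mu_2-1)(u/\mu_i-v)$, the walls are the lines $u=\mu_iv$, together with $u=v$ and $u=\mu_2v$ from the boundary terms. None of these meets $\Delta_m$, and $c_1,c_2>0$ there.
\end{itemize}

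Your route gives one closed spline formula for $V_w$ on all of $C$, recovering (\ref{A.20}) and (\ref{A.21}) as the cases $m=2,3$. It needs no patching and none of the combinatorial input that Lawrence's formula requires. That input is the simplicity of $\IV_w$ and the vertex list (\ref{A.25}), which the paper states without detailed verification. The paper's route is a general-purpose tool, but here the vertex structure is precisely the delicate part.

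Your inductive alternative would also work. It is essentially the slicing through (\ref{B.7}) that the paper itself uses for (\ref{B.13})--(\ref{B.15}), with (\ref{A.20}) and (\ref{A.21}) serving as base cases.
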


\begin{proof}
We thus consider a given $m$ as in (\ref{A.22}) and the corresponding $\Delta_m$, see (\ref{A.19}). For $i < m$ and $j \ge m$ in $\Sigma$, we introduce
\begin{equation}\label{A.23}
\begin{array}{l}
\mbox{$\sigma_{i,j} = \alpha_{i,j} e_i + \beta_{j,i} e_i$ with $\alpha_{i,j} > 0$ and $\beta_{j,i} > 0$ given by}
\\[1ex]
\alpha_{i,j} = (v-u/\mu_j) / (\mu_i^{-1} - \mu_j^{-1}), \; \beta_{j,i} = (v - u/\mu_i) \, / \, (\mu^{-1}_j - \mu^{-1}_i)
\\[1ex]
\mbox{(they are both positive since $\mu_{m-1} < u / v < \mu_m$)}.
\end{array}
\end{equation}
Then, one has (see Figure 1)
\begin{equation}\label{A.24}
\ell_1 (\sigma_{i,j}) = u - \mu_1 v \; \mbox{and} \; \ell_2(\sigma_{i,j}) = u - \mu_2 v.
\end{equation}
Since $w \in \Delta_m$, see (\ref{A.19}), $\IV_w$ is a compact convex polytope, which is simple: its extremal vertices saturate exactly $(n-2)$ constraints entering the definition of $\IV_w$ in (\ref{A.9}), and they are
\begin{equation}\label{A.25}
\left\{ \begin{array}{ll}
t_{1,j} \,e_j, j \in \Sigma, j \ge m, \\
\mbox{where the $\ell$-th coordinate vanishes for $\ell \in \Sigma \,\backslash \{j\}$, and}
\\
 \mbox{$\ell_1(\cdot)$ takes the value $u - \mu_1 v$},
\\[1ex]
t_{2,j} \, e_j, j \in \Sigma, j \ge m,  \\
 \mbox{where the $\ell$-th coordinate vanishes for $\ell \in \Sigma \,\backslash \{j\}$, and}
\\
\mbox{$\ell_2(\cdot)$ takes the value $u - \mu_2 v$},
\\[1ex]
\sigma_{i,j}, i < m, j \ge m, \; \mbox{and $i,j \in \Sigma$}, \\
\mbox{where the $\ell$-th coordinate vanishes for $\ell \in \Sigma \,\backslash \{i,j\}$, and}
\\
 \mbox{$\ell_1(\cdot), \ell_2(\cdot)$ respectively take the value $u - \mu_1 v, u - \mu_2 v$}.
\end{array}\right.
\end{equation}
(The other intersections of $(n-2)$ constraints do not lie in $\IV_{w}$.)

\medskip
Having in mind the application of the formula of Lawrence, see \cite{Lawr91}, for the volume $V_w$ of $\IV_w$, we introduce the $(n-2) \times n$ matrix
\begin{equation}\label{A.26}
A =  \begin{pmatrix}
1 - \mu_1/\mu_3 & -(1-\mu_2/\mu_3) & -1
\\
\vdots & \vdots &\quad  \ddots
\\
1 - \mu_1/\mu_i & -(1-\mu_2/\mu_i)&&\!\!\! -1
\\
\vdots & \vdots & &\quad \ddots
\\
1 - \mu_1/\mu_n& -(1-\mu_2/\mu_n)& && \!\!\!-1
\end{pmatrix}
\end{equation}

\medskip\n
so that $\IV_w$ can be realized as the set $(\langle \cdot, \cdot \rangle$ denotes the usual scalar product on $\IR^\Sigma$)
\begin{equation}\label{A.27}
\IV_w = \{x \in \IR^\Sigma; b_j - \langle a_j,x\rangle \ge 0, \; \mbox{for $j = 1,\dots, n\}$},
\end{equation}
with $a_1,\dots,a_n$ the column vectors of $A$ and $b_1,\dots,b_m$ suitable real numbers, see (\ref{A.5}), (\ref{A.9}).

\medskip
We denote by $\cV(\subseteq \IR^\Sigma)$ the collection in (\ref{A.25}). Then for each $\sigma \in \cV$ we denote by
\begin{equation}\label{A.28}
\begin{array}{l}
\mbox{$A_\sigma$ the sub-matrix of $A$ made of columns of $A$ for which}
\\
\mbox{the corresponding inequality defining $\IV_w$ is saturated at $\sigma$}
\\
\mbox{(it is an invertible $(n-2) \times (n-2)$-matrix)}.
\end{array}
\end{equation}
Further, we choose
\begin{equation}\label{A.29}
\begin{array}{l}
\mbox{$f \in \IR^\Sigma$ such that the map $x \in \IR^\Sigma \longrightarrow \langle f,x \rangle$ is non-constant}
\\
\mbox{on each edge of $\IV_w$}.
\end{array}
\end{equation}
Then, the theorem on p.~260 of \cite{Lawr91} shows that
\begin{equation}\label{A.30}
V_w = \mbox{\f $\dis\frac{1}{(n-2)!}$} \; \sum_{\sigma \in \cV} \; \mbox{\f $\dis\frac{\langle f,\sigma \rangle^{n-2}}{{\rm det} \,A_\sigma}$} \; \mbox{\f $\dis\frac{1}{\prod_{i \in \Sigma} \,\langle A^{-1}_\sigma f,  e_i \rangle}$} \,.
\end{equation}
Note that given $w^*$ in $\Delta_m$ (see (\ref{A.19})), $f$ in (\ref{A.29}) can be chosen such that (\ref{A.29}) holds for all $w$ in a neighborhood of $w^*$. Further, with (\ref{A.30}), the dependence of $V_w$ on $w$ in this neighborhood only enters through the term $\langle f, \sigma \rangle^{n-2}$, because the matrix $A$ in (\ref{A.26}) does not depend on $w$. We then note that
\begin{align}
&\mbox{when $\sigma = t_{1,j} \,e_j$, with $j \in \Sigma, j \ge m$, then $\langle f, \sigma \rangle \stackrel{(\ref{A.25}),(\ref{A.13})}{=} f_j (u - \mu_1 v) / (1- \mu_1/\mu_j)$}, \label{A.31}
\\[1ex]
&\mbox{when $\sigma = t_{2,j} \,e_j$, with $j \in \Sigma, j \ge m$, then $\langle f, \sigma \rangle \stackrel{(\ref{A.25}),(\ref{A.13})}{=} f_j (u - \mu_2 v) / (1-\mu_2/\mu_j)$},\label{A.32}
\intertext{and finally,} 
&\mbox{when $\sigma =\sigma_{i,j}$ with $i,j$ in $\Sigma, i < m \le j$, then} \label{A.33}
\\[-0.5ex]
&\langle f,\sigma \rangle \stackrel{(\ref{A.23})}{=} f_i \alpha_{ij} + f_j \,\beta_{ji} = \{f_i (v-u/\mu_j) - f_j(v-u / \mu_i)\} (\mu_i^{-1} - \mu^{-1}_j)^{-1}. \nonumber
\end{align}
Thus, (\ref{A.30}) expressses $V_w$ as a linear combination of $(u-\mu_1 v)^{n-2}$, $(u-\mu_2 v)^{n-2}$, $(v - u/\mu_j)^\ell \,(v-u / \mu_i)^{n-2-\ell}$, $0 \le \ell \le n-2$, with $i < m, j \ge m$ in $\Sigma$, which is locally fixed when $w = (u,v)$ varies in $\Delta_m$. By a patching argument we thus find that $V_w$ restricted to $\Delta_m$ coincides with an homogeneous polynomial in $u,v$ of degree $n-2$. This proves (\ref{A.22}).
\end{proof}

\section{Appendix: Conditional expectations}
\setcounter{equation}{0}

In this appendix we collect calculations, which will provide us with good versions $q_\ell (u,v)$ of the conditional expectations under $\mu$ of $x^2_\ell$  given $|x|^2 = u$ and $|x|^2_{-1} = v$ (see Section 1 for notation). We also establish some identities and properties of these functions. We now assume throughout that
\begin{equation}\label{B.1}
n \ge 4 \; \mbox{(and hence $N = 2n \ge 8$)}.
\end{equation}
We recall that in Appendix A we only assumed $n \ge 3$, see (\ref{A.1}). We first introduce some notation. Given an arbitrary
\begin{equation}\label{B.2}
i_0 \in \Sigma \; (= \{3, \dots,n\}),
\end{equation}
we set
\begin{equation}\label{B.3}
\Sigma^{i_0} = \Sigma \, \backslash \, \{i_0\},
\end{equation}
and for $(u,v) \in C$, as in (\ref{A.5}), (\ref{A.10}), but with the $i_0$-variable omitted, we define
\begin{equation}\label{B.4}
\begin{split}
\IV^{i_0}_{u,v} = \Big\{(s_i)_{i \in \Sigma^{i_0}} \;\mbox{in} \; \IR_+^{\Sigma^{i_0}}; & \; u - \mu_1 v \ge \Sigma_{i \in \Sigma^{i_0}} \,\Big(1 - \mbox{\f $\dis\frac{\mu_1}{\mu_i}$}\Big) \, s_i \; \mbox{and}
\\
&\; u - \mu_2 v \le \Sigma_{i \in \Sigma^{i_0}} \, \Big(1 - \mbox{\f $\dis\frac{\mu_2}{\mu_i}$}\Big) \, s_i \Big\},
\end{split}
\end{equation}
\begin{equation}\label{B.5}
\mbox{$V^{i_0} (u,v)$: the $(n-3)$-dimensional volume of $\IV_{u,v}^{i_0}$}.
\end{equation}
We sometimes write $\IV^{i_0}_w$ and $V^{i_0}_w$ in place of the left members of (\ref{B.4}), (\ref{B.5}) when $w = (u,v) \in C$. We also use the convention
\begin{equation}\label{B.6}
\mbox{$V^{i_0} (u,v) = 0$ when $(u,v) \notin C$}.
\end{equation}
Keeping the variable $s_{i_0}$ in (\ref{A.5}) fixed and integrating the other variables first, we obtain the identity (see (\ref{A.10}) for notation)
\begin{equation}\label{B.7}
V(u,v) = \dis\int^\infty_0 V^{i_0} (u- t, v - t/\mu_{i_0})\,dt, \; \mbox{for $(u,v) \in C$}.
\end{equation}
As in (\ref{A.12}) we note that, see (\ref{B.4}), (\ref{B.5}), $V^{i_0}$ is homogeneous of degree $(n-3)$:
\begin{equation}\label{B.8}
V^{i_0}(\lambda u, \lambda v) = \lambda^{(n-3)} \,V^{i_0}(u,v), \; \mbox{for $(u,v) \in \IR^2, \lambda \ge 0$}.
\end{equation}
We can now introduce on the interior $\stackrel{_\circ}{C}$ of the cone $C$ the function
\begin{equation}\label{B.9}
\wh{q}_{i_0} (u,v) = \mbox{\f $\dis\frac{\dis\int^\infty_0 t \,V^{i_0} (u - t, v - t/\mu_{i_0}) \, dt}{V(u,v)}$}\, .
\end{equation}
Note that the expression under the integral vanishes when $t > u$, and also that $V(u,v) > 0$ since $(u,v) \in \, \stackrel{_\circ}{C}$, (see (\ref{A.18})). we will later see that $\wh{q}_{i_0}(U,V)$ provides a ``good version'' of the conditional expectation $E^\mu [S_{i_0} \, | \, U,V]$, see (\ref{B.23}), and (\ref{A.2}) for notation. We first collect some properties of the functions $\wh{q}_{i_0}$.

\begin{lemma}\label{lemB.1} (With $(u,v) \in \,\stackrel{_\circ}{C}$ and $i_0 \in \Sigma$)
\begin{align}
&\mbox{$\wh{q}_{i_0} (u,v)$ is the expectation of the $i_0$-th coordinate on the set $\IV_{u,v}$ endowed} \label{B.10}
\\
&\mbox{with the normalized Lebesgue measure.} \nonumber
\\[2ex]
&\mbox{$\wh{q}_{i_0} (u,v)$ is a positive, continuous, homogeneous of degree $1$ function on $\stackrel{_\circ}{C}$}. \label{B.11}
\\[2ex]
& \mbox{on each $\Delta_m$, $2 \le m \le n$, $\wh{q}_{i_0}$ coincides with the ratio of an homogeneous} \label{B.12}
\\
& \mbox{polynomial of degree $(n-1)$ in $(u,v)$ with an homogeneous polynomial of} \nonumber
\\
&\mbox{degree $n-2$ in $(u,v)$}. \nonumber
\end{align}
\end{lemma}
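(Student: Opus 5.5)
The plan is to derive all three claims from the slicing identity (\ref{B.7}), together with the structure of $V$ established in Appendix A.

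\textbf{Proof of (\ref{B.10}).} Fix $(u,v)\in\stackrel{_\circ}{C}$. By Fubini, integrating first over the variables $s_i$, $i\in\Sigma^{i_0}$, with $s_{i_0}=t$ held fixed, the slice of $\IV_{u,v}$ at $s_{i_0}=t$ is exactly
\[
\IV^{i_0}_{u-t,\,v-t/\mu_{i_0}}.
\]
This is seen by moving the terms $(1-\mu_1/\mu_{i_0})t$ and $(1-\mu_2/\mu_{i_0})t$ to the left-hand sides of the constraints in (\ref{A.5}). Hence
\[
\int_{\IV_{u,v}} s_{i_0}\,d\sigma=\int_0^\infty t\,V^{i_0}(u-t,v-t/\mu_{i_0})\,dt .
\]
Dividing by $V(u,v)>0$ (see (\ref{A.18})) gives (\ref{B.10}). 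The same computation without the factor $t$ is (\ref{B.7}).

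\textbf{Proof of (\ref{B.11}).} Positivity holds because $\IV_{u,v}$ has positive volume and lies in $\IR^\Sigma_+$; its $i_0$-th coordinate is positive except on a null set, so the barycenter coordinate is positive. Homogeneity of degree $1$ follows by scaling: $\IV_{\lambda w}=\lambda\IV_w$, so the barycenter scales linearly. For continuity we use that the numerator $w\mapsto\int_{\IV_w}s_{i_0}\,d\sigma$ is continuous. This follows exactly as in (\ref{A.11}): the indicator of $\IV_w$ converges almost everywhere as $w$ varies, since the boundary hyperplanes are null sets, and dominated convergence applies on a bounded region. The denominator $V$ is continuous and positive on $\stackrel{_\circ}{C}$.

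\textbf{Proof of (\ref{B.12}).} We need to show that the numerator $M(u,v)=\int_{\IV_{u,v}}s_{i_0}\,d\sigma$ is a homogeneous polynomial of degree $n-1$ on each $\Delta_m$. The denominator is already polynomial there, by (\ref{A.20}), (\ref{A.21}) and Proposition \ref{propA.1}.

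The cleanest route is to differentiate in an auxiliary parameter. Define
\[
V_\theta(u,v)=\mathrm{vol}\Big\{\sigma\in\IR^\Sigma_+;\ \ell_1(\sigma)+\theta s_{i_0}\le u-v,\ \ell_2(\sigma)+\theta c\,s_{i_0}\ge u-\mu_2 v\Big\},
\]
where the constant $c$ is chosen so that the perturbation corresponds to replacing $\mu_{i_0}$ by a nearby value $\mu'_{i_0}$. In other words, $V_\theta$ is the volume $V$ of Appendix A computed with the modified spectral parameter. Then $M$ is, up to a constant factor, the derivative with respect to $\mu'_{i_0}$ of this volume.

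Lawrence's formula (\ref{A.30}) expresses this volume as a rational function of the parameters $\mu_i$. For $\mu'_{i_0}$ close to $\mu_{i_0}$ and $w$ in a compact subset of $\Delta_m$, the vertex combinatorics is unchanged, so on that region the volume is a polynomial in $(u,v)$ of degree $n-2$ whose coefficients depend smoothly on $\mu'_{i_0}$. Differentiating gives a polynomial again. However, this derivative has degree $n-2$, not $n-1$, so it does not directly produce the first moment.

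I therefore prefer a direct argument. Apply Lawrence-type reasoning to the moment: the first moment of a simple polytope equals $\frac{d}{d\epsilon}$ of $\int e^{\epsilon s_{i_0}}$ at $\epsilon=0$. Brion's and Lawrence's exponential-integral formula gives this integral as a sum over the vertices in (\ref{A.25}) of $e^{\epsilon\langle e_{i_0},\sigma\rangle}$ divided by products of linear forms. Expanding in powers of $\epsilon$ and extracting the relevant coefficient yields
\[
M=\sum_{\sigma\in\cV}c_\sigma\,\langle f,\sigma\rangle^{n-1}\quad\text{(generic $f$ perturbed by $e_{i_0}$)},
\]
with coefficients $c_\sigma$ independent of $w$ locally. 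Each vertex is linear in $(u,v)$ by (\ref{A.13}) and (\ref{A.23}). So, exactly as at the end of Proposition \ref{propA.1}, $M$ is locally, and then by patching on the connected set $\Delta_m$, a homogeneous polynomial of degree $n-1$.

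For $m=2,3$ one can instead compute directly using (\ref{B.7}) together with the simplex formulas (\ref{A.20}) and (\ref{A.21}) applied to $V^{i_0}$.

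The main obstacle is justifying the moment version of Lawrence's formula, namely that the generic-$f$ coefficients are locally constant in $w$. The other steps are routine.
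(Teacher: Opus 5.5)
Your arguments for (\ref{B.10}) and (\ref{B.11}) are essentially the paper's: slicing at fixed $s_{i_0}=t$, scaling, and dominated convergence. (The paper gets continuity of the numerator from continuity of $V^{i_0}$ in (\ref{B.9}) together with the bound $V^{i_0}(u,v)\le{\rm const.}\,u^{n-3}$.)

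For (\ref{B.12}) you take a genuinely different route, and it is valid. The paper never computes moments of $\IV_w$ directly. After the rescaling (\ref{B.15}), it uses the monotonicity of $\rho(u/v,\cdot)$ in (\ref{B.14}) to see that the points $r_i$, where $(u/v-r,1-r/\mu_{i_0})$ crosses the rays $u=\mu_i v$, are affine in $u/v$. It then integrates $r$ against the piecewise polynomial $V^{i_0}$, which is known from Proposition \ref{propA.1} and (\ref{A.20}), (\ref{A.21}) applied to the reduced system with $n-1$ pairs; this is why Appendix A only assumes $n\ge 3$. You instead take the degree-one part of Brion's exponential-integral identity on the simple polytope $\IV_w$: for $c$ non-constant on every edge,
\[
\int_{\IV_w}\langle c,x\rangle\,dx=\frac{1}{(n-1)!}\sum_{\sigma\in\cV}\langle c,\sigma\rangle^{n-1}D_\sigma(c),
\]
with the same $w$-independent vertex weights $D_\sigma(c)$ as in (\ref{A.30}). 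Your route avoids the slicing and breakpoint bookkeeping and gives polynomiality of the whole barycenter at once. The price is citing a stronger result than the volume formula the paper uses.

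Two points need tightening. First, $e_{i_0}$ itself need not be generic: it vanishes on edges inside $\{s_{i_0}=0\}$, e.g.\ $[t_{2,j}e_j,t_{1,j}e_j]$ with $j\ge m$, $j\ne i_0$. So your single-sum display for $M$ is not literally right. Write instead $M=\int\langle f+e_{i_0},x\rangle\,dx-\int\langle f,x\rangle\,dx$, with $f$ and $f+e_{i_0}$ both generic; this only excludes finitely many hyperplanes of $f$'s. Second, the local constancy you flag as the main obstacle is automatic. The weights depend only on the matrices $A_\sigma$, which do not depend on $w$ (as noted after (\ref{A.30})), and the vertices in (\ref{A.25}) are linear in $(u,v)$.

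The detour through differentiating in $\mu'_{i_0}$ is a dead end and can be dropped. Your direct treatment of $m=2,3$ via (\ref{A.20}), (\ref{A.21}) is fine. Alternatively, $\IV_w$ is simple there too: it is a simplex on $\Delta_2$, and on $\Delta_3$ each vertex $t_{1,i}e_i$, $t_{2,i}e_i$ saturates exactly $n-2$ constraints. So the same Brion argument covers those sectors.
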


\begin{proof}
We begin with (\ref{B.10}). It is a direct consequence of the observation above (\ref{B.7}), of (\ref{A.7}) and (\ref{B.9}). As for (\ref{B.11}), it readily follows from (\ref{A.11}), (\ref{A.18}), and the corresponding statement for $V^{i_0}$, with the domination $V^{i_0} (u,v) \le {\rm const.} \, u^{n-3}$, together with the homogeneity properties (\ref{A.12}), (\ref{B.8}). 

\medskip
We then turn to (\ref{B.12}). Given (\ref{A.22}), we only need to show that
\begin{equation}\label{B.13}
\begin{array}{l}
\mbox{on each $\Delta_m$, $2 \le m \le n$, the numerator in the right member of (\ref{B.9}) is an}
\\
\mbox{homogeneous polynomial of degree $n-1$ in $(u,v)$}.
\end{array}
\end{equation}
To prove (\ref{B.13}) we first note that the ratio
\begin{equation}\label{B.14}
\begin{array}{lll}
\rho(u/v, r) = \mbox{\f $\dis\frac{u/v - r}{1 - r/\mu_{i_0}}$} \; \; \mbox{is} 
\\[2ex]
\mbox{when $u/v = \mu_{i_0}$, identically equal to $\mu_{i_0}$, for $0 < r < \mu_{i_0}$},
\\[1ex]
\mbox{when $u/v > \mu_{i_0}$, an increasing function of $0 < r < \mu_{i_0}$},
\\[1ex]
\mbox{when $u/v < \mu_{i_0}$, a decreasing function of $0 < r < u/v$}.
\end{array}
\end{equation}
Indeed, $\rho(u/v,r) = \frac{u/v - \mu_{i_0}}{1 - r/\mu_{i_0}} + \mu_{i_0}$ on the above ranges.

\medskip
Then, using (\ref{B.8}) and the change of variable $t = vr$ in the numerator of the right member of (\ref{B.9}), we see that the numerator equals
\begin{equation}\label{B.15}
v^{(n-1)} \dis\int^\infty_0 r \, V^{i_0} (u/v - r, 1 - r/\mu_{i_0}) \, dr.
\end{equation}
When $u/v > \mu_{i_0}$, the values $r_i$ where the function $\rho(u/v, \cdot)$ reaches the successive values $\mu_i \ge u/v$ are affine functions of $u/v$. On each interval of $\IR_+$, where $r \not= r_i$, $V^{i_0} (u/v-r, 1 - r/\mu_{i_0})$ is a fixed homogeneous polynomial of degree $(n-3)$ of $u/v-r$, $1 - r/\mu_{i_0}$ by Proposition \ref{propA.1}. In (\ref{B.15}), performing the integration over the various intervals where $r$ reaches the successive values $r_i$, then yields a non-zero polynomial function of $u/v$ with degree at most $n-1$. When $u/v < \mu_{i_0}$, a similar argument can be applied. And when $u/v = \mu_{i_0}$, the integral in (\ref{B.15}) equals $V^{i_0} (\mu_{i_0},1) \int^{\mu_{i_0}}_0 r(1-r/\mu_{i_0})^{n-3} dr$. Multiplying the integral by $v^{(n-1)}$ in all three cases above yields an homogeneous polynomial of degree $n-1$ in $(u,v)$, thus completing the proof of (\ref{B.13}), and hence of (\ref{B.12}).
\end{proof}

Coming back to (\ref{B.9}), (\ref{B.10}) and (\ref{A.3}), we see that for $(u,v) \in \, \stackrel{_\circ}{C}$ 
\begin{equation}\label{B.16}
\begin{array}{l}
\mbox{the vector $(\wh{q}_i(u,v))_{i \in \Sigma}$ is the expectation of $(S_i)_{i \in \Sigma}$ under the conditional}
\\
\mbox{density of $(S_i)_{i \in \Sigma}$ given $U = u$ and $V = v$, and is also the barycenter of $\IV_{u,v}$}
\\
\mbox{(and hence does not depend on $a$ in (\ref{1.5}))}.
\end{array}
\end{equation}
Thus, if we define $\wh{q}_1(u,v)$ and $\wh{q}_2(u,v)$ for $(u,v) \in \, \stackrel{_\circ}{C}$ via (see (\ref{A.6}))
\begin{equation}\label{B.17}
\begin{split}
\mu_2 (1-1/\mu_2) \, \wh{q}_1(u,v) = &\; - u + \mu_2 v + \Sigma^n_3 \,(1- \mu_2/\mu_i) \, \wh{q}_i(u,v)
\\
(1-1/\mu_2) \, \wh{q}_2(u,v) = &\; u - \mu_1 v - \Sigma^n_3 \,(1- \mu_1/\mu_i) \, \wh{q}_i(u,v),
\end{split}
\end{equation}

\medskip\n
then they are positive (because the barycenter of $\IV_{u,v}$ lies in its interior) and
\begin{equation}\label{B.18}
\begin{array}{l}
\mbox{the functions $\wh{q}_i$, $i \in \{1, \dots, n\}$ satisfy (\ref{B.11}), (\ref{B.12}), and do not}
\\
\mbox{depend on the parameter $a$ in (\ref{1.5})}.
\end{array}
\end{equation}
They also satisfy
\begin{equation}\label{B.19}
u = \Sigma^n_{i = 1} \, \wh{q}_i(u,v) \; \mbox{and} \; v = \Sigma^n_1 \; \mbox{\f $\dis\frac{1}{\mu_i}$} \; \wh{q}_i (u,v).
\end{equation}
We now have
\begin{proposition}\label{propB.2}
The functions $\wh{q}_i$, $1 \le i \le n$, can be extended by continuity to $C$, these extensions do not depend on the parameter $a > 0$ in {\rm (\ref{1.5})} and have the boundary values:
\begin{equation}\label{B.20}
\begin{array}{ll}
{\rm a)} & \wh{q}_i (u,v) = 0, \;\mbox{for $u = v$ or $u = \mu_n v$, when $2 \le i \le n-1$, and $u \ge 0$},
\\[1ex]
{\rm b)} & \wh{q}_1 (u,u) = u, \;\wh{q}_1(u,u / \mu_n) = 0, \;\mbox{for $u \ge 0$},
\\[1ex]
{\rm c)} & \wh{q}_n(u,u) = 0, \;\wh{q}_n(u,u / \mu_n) = u, \;\mbox{for $u \ge 0$},
\end{array}
\end{equation}
and they are homogeneous of degree $1$ and satisfy {\rm(\ref{B.19})} on $C$.

\medskip
Moreover, setting (see {\rm (\ref{1.6})}, {\rm (\ref{A.2})} for notation)
\begin{equation}\label{B.21}
q_\ell (u,v) = \fr \; \wh{q}_i (u,v), \; \mbox{for $\ell = 2i$ or $2i-1$, and $(u,v) \in C$},
\end{equation}
one has
\begin{align}
q_\ell (U,V) = &\; \mbox{$E^\mu\,[x_\ell^2 \, | \, U,V], \; \mu$-a.s., for $1 \le \ell \le N$, and} \label{B.22}
\\[1ex]
\wh{q}_i (U,V) = &\; \mbox{$E^\mu\,[S_i \, | \, U,V], \; \mu$-a.s., for $1 \le i \le N$}. \label{B.23}
\end{align}
\end{proposition}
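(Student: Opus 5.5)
The plan is to handle continuity up to $\partial C$ first, then the boundary values, and finally the conditional expectation identities.

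\emph{Continuity and boundary values.} By homogeneity (\ref{B.11}), it suffices to study $r \mapsto \wh{q}_i(r,1)$ for $r \in (1,\mu_n)$ and show it has limits as $r \downarrow 1$ and $r \uparrow \mu_n$. For $3 \le i \le n$, (\ref{B.10}) identifies $\wh{q}_i(r,1)$ as the $i$-th coordinate of the barycenter of $\IV_{r,1}$.

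As $r \downarrow 1$, the constraint $\ell_1(\sigma) \le r-1$ with $\sigma \in \IR_+^\Sigma$ forces every coordinate to be at most $(r-1)/(1-\mu_1/\mu_3)$. So $\IV_{r,1}$ shrinks to $\{0\}$, and all these coordinates tend to $0$.

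As $r \uparrow \mu_n$, the two constraints give
\[
\ell_1(\sigma) - \ell_2(\sigma) = \sum_{3 \le i \le n} \frac{\mu_2-\mu_1}{\mu_i}\, s_i \le (\mu_2-\mu_1)\cdot 1 .
\]
Combining this with $\ell_1(\sigma) \le r-1$, which gives $\sum_i (1-1/\mu_i) s_i \le r-1$, we get
\[
\sum_i s_i \le (r-1) + 1 = r .
\]
Near $r=\mu_n$ this forces $\sigma$ close to $\mu_n e_n$. A direct way to see this: the point $t_{1,n}e_n = t_{2,n}e_n = \mu_n e_n$ at $r=\mu_n$ is the unique point of $\IV_{\mu_n,1}$, since (\ref{A.16}) gives $t_{1,n}=t_{2,n}$ there. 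By compactness and continuity of the defining inequalities (Hausdorff convergence), $\IV_{r,1} \to \{\mu_n e_n\}$. Hence the barycenters converge, giving $\wh{q}_i \to 0$ for $3 \le i \le n-1$ and $\wh{q}_n \to \mu_n v = u$.

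The formulas (\ref{B.17}) are linear in the $\wh{q}_i$, $i \ge 3$, so they pass to the limit. This yields the values of $\wh{q}_1,\wh{q}_2$ in a)–c):
\begin{itemize}
\item at $u=v$: $\wh{q}_2 = 0$ and $\wh{q}_1 = u$;
\item at $u=\mu_n v$: $\wh{q}_2 \cdot (1-1/\mu_2) = \mu_n - 1 - (1-1/\mu_n)\mu_n = 0$, and similarly $\wh{q}_1 = 0$.
\end{itemize}
Homogeneity and (\ref{B.19}) extend by continuity to $C$. At the origin all functions are set to $0$ and continuity follows from degree-one homogeneity. Independence from $a$ is immediate, since everything is defined through volumes of $\IV_{u,v}$.

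\emph{Conditional expectations.} Using (\ref{A.3}), the joint density of $(U,V,S_3,\dots,S_n)$ is proportional to $e^{-u/a}\,1_C(u,v)\,1_{\IV_{u,v}}(\sigma)$. Hence the conditional law of $(S_i)_{i\in\Sigma}$ given $(U,V)=(u,v)$ is uniform on $\IV_{u,v}$ for $(u,v)\in \stackrel{_\circ}{C}$, which carries full $\mu$-measure since $\partial C$ is null. Therefore $E^\mu[S_i\,|\,U,V] = \wh{q}_i(U,V)$ for $i\ge 3$ by (\ref{B.10}).

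For $i=1,2$, we use that $S_1,S_2$ are the affine functions (\ref{A.6}) of $(U,V,S_3,\dots,S_n)$. Taking conditional expectations in (\ref{A.6}) and comparing with (\ref{B.17}) gives (\ref{B.23}).

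For (\ref{B.22}), the map $(x_{2i-1},x_{2i}) \mapsto (x_{2i},x_{2i-1})$ preserves $\mu$, $U$ and $V$, since $\lambda_{2i}=\lambda_{2i-1}$. So $E^\mu[x_{2i}^2\,|\,U,V] = E^\mu[x_{2i-1}^2\,|\,U,V]$, and each equals $\frac12 E^\mu[S_i\,|\,U,V]$. Integrability is clear since $S_i \le U \in L^1(\mu)$.

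\emph{Main obstacle.} The delicate step is the limit $r \uparrow \mu_n$, specifically justifying that $\IV_{r,1}$ collapses onto the single point $\mu_n e_n$. I would handle it with the explicit bound above: on $\IV_{r,1}$, $\sum_i s_i \le r$ while $\sum_i (1-1/\mu_i)s_i \ge$ something close to $r-1$ (from the $\ell_2$ constraint). This forces the mass of $\sigma$ onto coordinate $n$, so $s_i \to 0$ for $i<n$ and $s_n \to \mu_n$.
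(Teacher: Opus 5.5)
Your proof is correct. For the conditional expectations it follows the paper: (\ref{B.23}) comes from the uniform conditional law on $\IV_{u,v}$ when $i\ge 3$, and from the affine relations (\ref{A.6}) compared with (\ref{B.17}) when $i=1,2$. Your swap argument $(x_{2i-1},x_{2i})\mapsto(x_{2i},x_{2i-1})$ in fact supplies a step the paper leaves implicit when it calls (\ref{B.22}) ``immediate from (\ref{A.2})''.

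Where you differ is continuity and the boundary values. You work at the level of the whole polytope. As $r\downarrow 1$, $\IV_{r,1}$ shrinks to $\{0\}$. As $r\uparrow\mu_n$, it collapses to $\{\mu_n e_n\}$ by a compactness (upper semicontinuity) argument. You then pass to the limit in (\ref{B.17}) to get $\wh{q}_1,\wh{q}_2$.

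The paper works directly with the barycenters. By (\ref{B.19}), on $\stackrel{_\circ}{C}$,
\[
u-v=\sum_{i=1}^n(1-\mu_i^{-1})\,\wh{q}_i(u,v),\qquad \mu_n v-u=\sum_{i=1}^n\Big(\frac{\mu_n}{\mu_i}-1\Big)\wh{q}_i(u,v).
\]
All $\wh{q}_i$ are non-negative, including $\wh{q}_1,\wh{q}_2$, since the barycenter is interior. So each term with a positive coefficient is bounded by the left member. Hence $\wh{q}_i\to 0$ for $i\ge 2$ at $u=v$, and for $i\le n-1$ at $u=\mu_n v$; the remaining function tends to $u$ by (\ref{B.19}). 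This takes two lines, needs neither compactness nor the reduction via homogeneity, handles arbitrary approach paths, and gives a linear rate.

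Your route rests on $\IV_{\mu_n,1}$ being the single point $\mu_n e_n$. The fact $t_{1,n}=t_{2,n}$ from (\ref{A.16}) only shows that this point lies in $\IV_{\mu_n,1}$, not that it is the only one. The clean justification is the pointwise version of the identity above. For $\sigma\in\IV_{u,v}$, with $s_1,s_2\ge 0$ given by (\ref{A.6}),
\[
\mu_n v-u=\sum_{i=1}^n\Big(\frac{\mu_n}{\mu_i}-1\Big)s_i .
\]
At $u=\mu_n v$ this forces $s_i=0$ for $i<n$. For $u<\mu_n v$ it gives $s_i\le(\mu_n v-u)/(\mu_n/\mu_i-1)$ uniformly on $\IV_{u,v}$. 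That bound makes both the Hausdorff argument and your pinching sketch unnecessary. The sketch is also loose: the $\ell_2$ constraint alone only gives $\ell_1\ge\ell_2\ge r-\mu_2$, and you need $\mu_i\le\mu_n$ as well. Averaging the pointwise bound over $\IV_{u,v}$ is exactly the paper's argument.
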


\begin{proof}
To prove the first statement we note that by (\ref{B.19}) on $\stackrel{_\circ}{C}$ one has
\begin{equation}\label{B.24}
u - v = \Sigma^n_1 \,(1-\mu_i^{-1}) \, \wh{q}_i(u,v), \; \mu_n v - u = \Sigma^n_1 \, \Big(\mbox{\f $\dis\frac{\mu_n}{\mu_i}$} - 1\Big) \, \wh{q}_i(u,v).
\end{equation}
Letting $(u,v)$ in $\stackrel{_\circ}{C}$ tend to $(u_0,u_0)$ with $u_0 \ge 0$, we see that the positive $\wh{q}_i (u,v)$ tend to $0$, for $2 \le i \le n$, and by (\ref{B.19}) that $\wh{q}_1(u,v)$ tends to $u$. Similarly, letting $(u,v)$ in $\stackrel{_\circ}{C}$ tend to $(u_0, u_0/\mu_n)$, we find from the second equality of (\ref{B.24}) that $\wh{q}_i (u,v)$ tends to $0$ for $1 \le i \le n-1$, and by (\ref{B.19}) that $\wh{q}_n(u,v)$ tends to $u$. This shows that the $\wh{q}_i$, $1 \le i \le n$, can be extended by continuity to $C$ so that (\ref{B.20}) and (\ref{B.19}) hold. These extended functions are homogeneous of degree $1$ and do not depend on $a$ in (\ref{1.5}), see (\ref{B.12}). As for (\ref{B.23}), it readily follows from (\ref{B.16}), when $3 \le i \le n$, and from (\ref{B.17}) and (\ref{A.6}), when $i = 1$ or $2$. The claim (\ref{B.22}) is then immediate from (\ref{A.2}). This completes the proof of Proposition \ref{propB.2}.
\end{proof}

We refer to Remark A.2 of the companion article \cite{SzniWidm26b} for a sharper version of (\ref{B.22}). We conclude this appendix with some explicit formulas for the $\wh{q}_i$ and $q_\ell$.~These formulas mainly exploit (\ref{B.7}), (\ref{B.9}) as well as the arguments below (\ref{B.14}).

\begin{remark}\label{remB.3}  \rm
\begin{equation}\label{B.25}
\begin{array}{ll}
\mbox{when $3 \le i \le n-1$, $\wh{q}_i\,(\mu_iv, v)= \mu_i \; \mbox{\f $\dis\frac{v}{n-1}$}$, for $v \ge 0$, and}
\\[2ex]
\mbox{when $ 5 \le \ell \le N-2$, $q_\ell\, (\lambda_\ell v, v) = \lambda_\ell \; \mbox{\f $\dis\frac{v}{N-2}$}$, for $v \ge 0$}. 
\end{array}
\end{equation}
By (\ref{B.21}) and (\ref{1.2}), (\ref{1.4}), we only need to prove the identities on the first line of (\ref{B.25}). We can also assume $v=1$ by homogeneity, see below (\ref{B.20}). The argument at the end of the paragraph below (\ref{B.15}), and the fact that $V^i (\mu_i, 1) > 0$, by (\ref{A.18}) and (\ref{B.5}), shows that
\begin{equation}\label{B.26}
\begin{split}
\wh{q}_i \,(\mu_i,1) & = \mbox{\f $\dis\frac{\dis\int^{\mu_i}_0 r(1-r/\mu_i)^{n-3} dr}{\dis\int^{\mu_i}_0 (1-r/\mu_i)^{n-3} \,dr}$} = \mu_i \; \mbox{\f $\dis\frac{\dis\int^{1}_0 t(1-t)^{n-3} dt}{\dis\int^{1}_0 (1-t)^{n-3} \,dt}$} = \mu_i \mbox{\f $\dis\frac{\dis\int^{1}_0 (1-s) \,s^{n-3}\,ds}{\dis\int^{1}_0 s^{n-3} \,ds}$} 
\\[1ex]
& = \mu_i \Big( \mbox{\f $\dis\frac{1}{n-2}$} - \mbox{\f $\dis\frac{1}{n-1}$}\Big) (n-2) = \mbox{\f $\dis\frac{\mu_i}{n-1}$},
\end{split}
\end{equation}
and (\ref{B.25}) follows.

\medskip
Further, one can also compute the values of the $\wh{q}_i$ and $q_\ell$ functions in the closure of the sector $\Delta_2$, see (\ref{A.19}). Namely, for $0 \le v \le u \le \mu_2 v$, one has 
\begin{equation}\label{B.27}
\begin{array}{l}
\wh{q}_i \,(u,v) = \mbox{\f $\dis\frac{1}{n-1}$} \; \mbox{\f $\dis\frac{u-v}{1 - 1/\mu_i}$}, \;\mbox{for} \; 2 \le i \le n, \; \wh{q}_1 (u,v) = v - \mbox{\f $\dis\frac{1}{n-1}$} \; \sum\limits_{i \ge 2} \; \mbox{\f $\dis\frac{u-v}{\mu_i - 1}$}, \quad \mbox{and}
\\[2ex]
q_\ell \,(u,v) = \mbox{\f $\dis\frac{1}{N-2}$} \; \mbox{\f $\dis\frac{u-v}{1 - 1/\lambda_\ell}$}, \;\mbox{for} \; 3 \le \ell \le N, 
\\[2ex]
q_\ell\,(u,v) = \mbox{\f $\dis\frac{v}{2}$}  - \mbox{\f $\dis\frac{1}{2} \dis\frac{1}{N-2}$} \; \sum\limits_{m \ge 2} \; \mbox{\f $\dis\frac{u-v}{\lambda_m - 1}$}, \; \mbox{for $\ell = 1,2$}.
\end{array}
\end{equation}
We only need to prove the identities on the first line, and we can assume that $0 < v < u < \mu_2 v$ (i.e.~$(u,v) \in \Delta_2$). When $i_0 \in \{3, \dots, n\}$, it follows from (\ref{B.9}), (\ref{B.7}) that
\begin{equation}\label{B.28}
\wh{q}_{i_0}\, (u,v) = \mbox{\f $\dis\frac{\dis\int^\infty_0 t \,V^{i_0} (u-t, v-t/\mu_{i_0})\, dt}{\dis\int^\infty_0 V^{i_0} (u-t, v-t/\mu_{i_0})\, dt}$} \;.
\end{equation}
Since $u/v < \mu_2$, the quantity $\rho(u/v,r)$ in (\ref{B.14}) is a decreasing function of $0 < r < u/v$, so that $V^{i_0}(\cdot,\cdot)$ in (\ref{B.28}) solely involves the formula (\ref{A.20}) (with $n-2$ replaced by $n-3$ and $3 \le i \le n$ replaced by $3 \le i \le n$ and $i \not= i_0$). We thus find that the integrals in (\ref{B.28}) run until $t_0$ such that $u - t_0 = v - t/\mu_{i_0}$, i.e.~$t_0\,(1 - 1/\mu_{i_0}) = u-v$. With (\ref{A.20}) we thus find that
\begin{equation}\label{B.29}
\begin{split}
\wh{q}_i \,(u,v) & = \mbox{\f $\dis\frac{\dis\int^{t_0}_0 t(u-t - v + t/\mu_{i_0})^{n-3} dt}{\dis\int^{t_0}_0 (u-t-v+t/\mu_{i_0})^{n-3} \,dt}$} =  \mbox{\f $\dis\frac{\dis\int^{(u-v)/(1-1/\mu_{i_0})}_0 t(u-v-t(1-1/\mu_{i_0}))^{n-3} dt}{\dis\int^{(u-v)/(1-1/\mu_{i_0})}_0 (u-v-t(1-1/\mu_{i_0}))^{n-3} \,dt}$} 
\\
& =  \mbox{\f $\dis\frac{(u-v)}{1-1/\mu_{i_0}}$} \;  \mbox{\f $\dis\frac{\dis\int^1_0 s(1-s)^{n-3} \, ds}{\dis\int^1_0 (1-s)^{n-3} \, ds}$} \stackrel{{\rm (\ref{B.26})}}{=} \mbox{\f $\dis\frac{1}{n-1}$} \;  \mbox{\f $\dis\frac{u-v}{1-1/\mu_{i_0}}$}, \; \mbox{for $3 \le i \le n$}.
\end{split}
\end{equation}
Then, by (\ref{B.17}), we find that
\begin{equation}\label{B.30}
(1 - 1/\mu_2) \, \wh{q}_2\, (u,v) = u-v - \Sigma_{i \ge 3} \,\Big(1 - \mbox{\f $\dis\frac{\mu_1}{\mu_i}$}\Big) \;\wh{q}_i\, (u,v) \stackrel{{\rm (\ref{B.29})}}{=} u-v -  \mbox{\f $\dis\frac{n-2}{n-1}$} (u-v) = \mbox{\f $\dis\frac{u-v}{n-1}$}\,,
\end{equation}
and that
\begin{equation}\label{B.31}
\begin{split}
\wh{q}_1 \,(u,v) & \stackrel{{\rm (\ref{B.19})}}{=} u - \Sigma_{i \ge 2} \,\wh{q}_i \, (u,v) = u - \Sigma_{i \ge 2} \; \mbox{\f $\dis\frac{1}{1 - 1/\mu_i}$} \; \mbox{\f $\dis\frac{u-v}{n-1}$} 
\\[1ex]
&\;\;\,  = v + \Sigma_{i \ge 2} \, \Big(1 - \mbox{\f $\dis\frac{1}{1 - 1/\mu_i}$}\Big) \; \mbox{\f $\dis\frac{u-v}{n-1}$} = v - \mbox{\f $\dis\frac{1}{n-1}$} \; \Sigma_{i \ge 2} \; \mbox{\f $\dis\frac{u-v}{\mu_i -1}$} \,.
\end{split}
\end{equation}
This completes the proof of (\ref{B.27}). \hfill $\square$
\end{remark}
\end{appendix}

\end{document}